\title{A remark on Lurie's Representability Theorem}
\author{Aron Heleodoro}
\date{\today}
\begin{document}
    
\begin{abstract}
    In this note we revisit Lurie's representability theorem for geometric stacks and prove that one of the conditions can be mildly relaxed. The proof uses ideas from Hall--Rydh's work on the (classical) Artin's representability theorem. We also spell out a couple of variants of the main theorem and review the relation between deformation theory and homogeneity conditions on prestacks.
\end{abstract}

\maketitle

\tableofcontents

\section{Introduction}

The objective of this note is to prove the following result:

\begin{thm}[Main Theorem]
\label{thm:main-result}
Consider a map $p:\sX \ra S$, where $\sX$ is a prestack and $S$ is a Grothendieck affine scheme, then
\[
    \sX\mbox{ is an }n\mbox{-geometric stack locally almost of finite presentation}
\]
if and only if the following conditions are satisfied:
\begin{enumerate}
    \item $\sX$ is a sheaf with respect to the \'etale topology;
    \item $\sX$ is locally almost of finite presentation;
    \item $\sX$ is integrable;
    \item[$(iv)_{\rm triv}$] $\sX$ is $W^{\rm lA}_{\rm triv.}$-homogeneous (see Definition \ref{defn:P-homogeneous} below);
    \item[$(v)_{\rm ft}$] for every $(T \overset{x}{\ra} \sX) \in (\Schaffconvft)_{/\sX}$, $\sX$ admits a cotangent complex at $x$ (see \S \ref{subsubsec:convergent-and-laft-cotangent-complex} below);
    \item $\sX$ is convergent;
    \item[$(vii)_n$] the underlying classical prestack $\classical{\sX}$ is $n$-truncated.
\end{enumerate}
\end{thm}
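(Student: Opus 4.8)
The plan is to deduce the theorem from Lurie's Representability Theorem for geometric stacks, whose hypotheses have the same shape but demand a \emph{full} deformation theory; the Hall--Rydh circle of ideas is what bridges the gap between the two sets of hypotheses. The ``only if'' direction is formal: for an $n$-geometric stack locally almost of finite presentation, conditions $(i)$ and $(ii)$ hold by definition, while $(iii)$, $(iv)_{\rm triv}$ (a consequence of full $W^{\rm lA}$-homogeneity), $(v)_{\rm ft}$ (a consequence of the existence of a cotangent complex at every point), $(vi)$, and $(vii)_n$ hold by the basic theory of geometric stacks.

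For the converse, conditions $(i)$, $(ii)$, $(iii)$, $(vi)$, $(vii)_n$ are, or at once yield, the corresponding hypotheses of Lurie's theorem. What that theorem asks in addition is $W^{\rm lA}$-homogeneity with respect to \emph{all} almost-finitely-presented square-zero extensions, and the existence of a cotangent complex at \emph{every} affine test point $(T\overset{x}{\ra}\sX)$ --- not merely at the finite-type ones permitted by $(iv)_{\rm triv}$ and $(v)_{\rm ft}$. The content of the proof is therefore to upgrade $(iv)_{\rm triv}$ and $(v)_{\rm ft}$ to these stronger statements, using the remaining axioms.

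I would treat the cotangent complex by approximation, in the spirit of Hall--Rydh. Since $\sX$ is locally almost of finite presentation, its restriction to $\Schaffconv$ is the left Kan extension of its restriction to $\Schaffconvft$; so, reducing first to $T\in\Schaffconv$ by convergence $(vi)$ and then writing such a $T$ as a cofiltered limit of finite-type affine schemes using $(ii)$, one sees that $x$ factors as $T\ra T_0\overset{x_0}{\ra}\sX$ with $T_0\in\Schaffconvft$. The cotangent complex at $x$ is then defined to be the pullback of $L_{\sX,x_0}$ --- which exists by $(v)_{\rm ft}$ --- along $T\ra T_0$, and one checks that this is independent of the chosen approximation and satisfies the universal property defining a cotangent complex at $x$. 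These checks use that $L_{\sX,x_0}$ is almost perfect, so that the relevant mapping spaces commute with the filtered (co)limits produced by the approximation, and invoke $(ii)$ and $(vi)$ once more to reduce an arbitrary test module on $T$ to coherent modules pulled back from finite-type approximations of $T$.

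The harder step --- and the one I expect to be the main obstacle --- is to promote $(iv)_{\rm triv}$ to $W^{\rm lA}$-homogeneity, i.e. to show that $\sX$ sends the pushout square attached to an \emph{arbitrary} square-zero extension to a pullback square. With the cotangent complex now available, a square-zero extension $T\ra T'$ classified by a derivation $L_T\ra M[1]$ can be rebuilt as a pushout of two copies of $T$ along a \emph{trivial} (shifted, split) square-zero extension $T[M[1]]$, and the pushout $T'\sqcup_T T''$ appearing in the homogeneity condition decomposes similarly into trivial pieces; feeding $(iv)_{\rm triv}$ into this decomposition, and using the cotangent complex to make the resulting computation effective, should force the desired pullback identity. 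The difficulty is to run this while keeping every auxiliary affine scheme --- the shifted split extensions, the various pushouts --- inside the class where the hypotheses actually apply: locally almost of finite presentation throughout, and of finite type wherever $(v)_{\rm ft}$ is invoked, which forces one to take the test module $M$ coherent and to absorb a further approximation step; and to set up the cotangent-complex control of square-zero extensions with the correct connectivity and in the correct category ($\IndCoh$ versus $\QCoh$, pro-objects versus honest ones), compatibly with Definition \ref{defn:P-homogeneous} and \S\ref{subsubsec:convergent-and-laft-cotangent-complex}. This upgrade may well not be purely formal, and is where I would expect genuine use of the remaining hypotheses. Once $W^{\rm lA}$-homogeneity and the cotangent complex at all affine test points are in hand, Lurie's Representability Theorem applies and gives that $\sX$ is an $n$-geometric stack locally almost of finite presentation.
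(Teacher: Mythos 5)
Your treatment of the cotangent complex is essentially the paper's Lemma \ref{lem:cotangent-complex-from-ft-points}: using (ii) and (vi) one reduces the lifting functor at an arbitrary point to lifting functors at eventually coconnective finite-type points, so $(v)_{\rm ft}$ does upgrade to $(v)_{\rm all}$. The gap is in your plan for upgrading $(iv)_{\rm triv}$. First, $W^{\rm lA}_{\rm triv.}$ does not refer to \emph{split} square-zero extensions: it is the class of morphisms between local Artinian affine schemes of finite type inducing a trivial extension of residue fields. So the deficit you must make up is not ``non-split versus split extensions'' (which the standard decomposition of a square-zero extension as a pushout along a split one would indeed handle, cf.\ the proof of Proposition \ref{prop:equivalent-formally-cohesive}); it is (a) arbitrary residue field extensions, handled by \'etale descent (Lemma \ref{lem:descent-implies-formally-cohesive}), and, much more seriously, (b) the passage from \emph{local Artinian} test schemes to \emph{arbitrary} affine test schemes. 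No formal manipulation of derivations and split extensions bridges (b), because homogeneity over a general affine scheme is not a pro-Artinian statement.

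The paper's mechanism for (b) is the Hall--Rydh bootstrap (Lemma \ref{lem:main-lemma}): one needs the diagonal of $\sX$ to be geometric, which gives the full-faithfulness half $(H^{W_{\rm all}}_1)$ of homogeneity via Lemma \ref{lem:H1-for-laft-affines}; the surjectivity half $(H^{W_{\rm all}}_2)$ is then reduced to Henselian local rings (Lemma \ref{lem:H2-for-Henselian-schemes}) and settled by producing, through the closed point, an approximate chart (Proposition \ref{prop:approximate-chart}, which uses integrability and Popescu's theorem) refined to a formally smooth chart (Proposition \ref{prop:smooth-chart}) against which one lifts using the Henselian property (Lemma \ref{lem:Henselian-lift-against-smooth-geometric-stacks}). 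Since the geometricity of the diagonal is not a hypothesis of Theorem \ref{thm:main-result}, the whole argument must be organized as an induction on the truncation degree $n$ of $(vii)_n$: the diagonal of an $n$-good prestack is $(n-1)$-good (Proposition \ref{prop:n-good-morphism-has-n-1-good-diagonal}), hence $(n-1)$-geometric by induction, which is exactly what unlocks Lemma \ref{lem:main-lemma} at each stage; Lurie's absolute representability result (Theorem \ref{thm:classical-representable}) is invoked only at the base case $n=-2$, while for general $n$ one instead builds the atlas directly via Theorem \ref{thm:etale-surjection} and checks the \v{C}ech nerve is levelwise $(n-1)$-geometric. Your proposal, which tries to verify all of Lurie's hypotheses up front and then apply his theorem once, cannot be completed as stated because the homogeneity upgrade it relies on is unavailable before the diagonal has been shown to be geometric.
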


\begin{rem}
Theorem \ref{thm:main-result} is very closely related to the celebrated Lurie's representability theorem (see \cite[Theorem 7.1.6]{DAG} or Theorem \ref{thm:representability-natural} below). Essentially the only difference is in condition (iv)\textsubscript{triv}, whereas Lurie's result imposes the stronger
\begin{enumerate}
    \item[$(iv)'_{\rm all}$] $\sX$ is infinitesimally cohesive (see Definition \ref{defn:infinitesimally-cohesive}).
\end{enumerate}
\end{rem}

\begin{rem}
In view of Proposition \ref{prop:equivalent-formally-cohesive} we could substitute condition (iv)\textsubscript{triv} in Theorem \ref{thm:main-result} by 
\begin{enumerate}
    \item[$(iv)'_{\rm Art}$] $\sX$ is formally cohesive (see Definition \ref{defn:formally-cohesive})
\end{enumerate}
\end{rem}

\begin{rem}
Notice that by Lemma \ref{lem:cotangent-complex-from-ft-points} assuming conditions (ii) and (vi), condition (v)\textsubscript{ft} is equivalent to the stronger
\begin{enumerate}
    \item[$(v)_{\rm all}$] $\sX$ admits a cotangent complex (see \S \ref{subsubsec:cotangent-complex});
\end{enumerate}
\end{rem}

\begin{parag}
Our proof uses exactly the same strategy as Lurie's result with the added observation learned from \cite{Hall-Rydh-algebraicity} that one can obtain condition (iv)'\textsubscript{all} from the weaker (iv)\textsubscript{triv} if we assume (i-iii), (v)\textsubscript{field} and that the diagonal is representable by a $0$-geometric stack (see Lemma \ref{lem:main-lemma} for the main technical input in weakening condition (iv)'\textsubscript{all}). However, notice that by setting up the inductive argument correctly we don't need to assume that the diagonal is geometric in Theorem \ref{thm:main-result}.
\end{parag}

\begin{rem}
Throughout this work we will consider the base as having characteristic $0$, this assumption bypasses many interesting complications. The main reason why we stick with it at the moment is that for arbitrary connective $\bE_{\infty}$-rings there are two nonequivalent notions of smoothness: fiber smooth and differentially smooth. Smoothness enters the main result in two points: 1) the very definition of $n$-geometric, which assumes one has smooth atlases, and 2) in the bootstrapping of deformation theory (Lemma \ref{lem:main-lemma}) which uses the fact that a smooth morphism between geometric stacks lifts against Henselian pairs (Lemma \ref{lem:Henselian-lift-against-smooth-geometric-stacks}).
\end{rem}

\subsection{Conventions}

Since the statement of Theorem \ref{thm:main-result} really depends on which conventions one is taking, we will be a bit pedantic and define all the terms involved.

\begin{parag}
A category, functor, or any categorical operation should be understood in the $\infty$-categorical sense. Given two objects $C,D$ in an $\infty$-category $\sC$ we let $\Map_{\sC}(C,D)$ denote their mapping space and $\Hom_{\sC}(X,Y) := \pi_0(\Map_{\sC}(C,D))$.
\end{parag}

\begin{parag}
A ring will always mean a commutative unital connective $\bE_{\infty}$-ring. When we restrict the base $S = \Spec\, k$ to the spectrum of a field of characteristic $0$, a ring can also be understood as a simplicial commutative $k$-algebra or a commutative differential graded $k$-algebra.
\end{parag}

\begin{parag}
A prestack is a functor from the opposite of the category of affine schemes to the category of spaces, i.e.\ the $\infty$-category freely generated under colimits by a single object.
\end{parag}

\begin{parag}
We adopt cohomological grading conventions everywhere.
\end{parag}

\begin{parag}
An affine scheme $S = \Spec\,R$ is \emph{Grothendieck} if $R$ is Grothendieck, i.e. 
\begin{defnlist}
    \item $R$ is Noetherian;
    \item $H^0(R)$ is a Grothendieck ring\footnote{Also refereed to as a  G-ring.}, i.e.\ for every prime ideal $\fp_0 \subset H^0(R)$ the canonical morphism $H^0(R)_{\fp_0} \ra H^0(R)^{\wedge}_{\fp_0}$ to its completion is regular.
\end{defnlist}
\end{parag}

\begin{parag}
\label{subsubsec:0-geometric-defn}
A prestack $p:\sX \ra S$ is \emph{$0$-geometric} if
\begin{defnlist}
    \item (descent) $\sX$ is an \'etale sheaf;
    \item (truncatedness) $\classical{\sX}$ is $0$-truncated (see \S \ref{subsubsec:truncated-prestack} below);
    \item (\'etale atlas) there exist $\sU \simeq \sqcup_I U_i$ a disjoint union of affine schemes and an affine representable morphism
    \[
        f:\sU \ra \sX
    \]
    such that $f$ is smooth\footnote{For any affine scheme $T \ra \sX$ the base change $\sU\underset{\sX}{\times} T \ra T$ is a smooth morphism of affine schemes.}.
\end{defnlist}
\end{parag}

\begin{parag}
A morphism $f:\sX \ra \sY$ is \emph{$0$-geometric}, if for every affine scheme $S \ra \sY$ the pullback $\sX\underset{\sY}{\times}S \ra S$ is $0$-geometric. A $0$-geometric morphism of prestacks $f$ is \emph{smooth} (resp.\ \emph{\'etale}) if for every affine point $S \ra \sY$ the pullback $\sX\underset{\sY}{\times}S \ra S$ is smooth (resp.\ \'etale), i.e.\ for any atlas $\sU=\sqcup_I U_i \ra \sX\underset{\sY}{\times}S$ each of the composites $U_i \ra S$ is smooth (resp.\ \'etale).
\end{parag}

\begin{parag}
\label{subsubsec:n-geometric-defn}
For $n \geq 1$ a prestack $p:\sX \ra S$ is \emph{$n$-geometric} if
\begin{defnlist}
    \item $\sX$ is an \'etale sheaf;
    \item the diagonal morphism $\Delta_{\sX,S}:\sX \ra \sX\underset{S}{\times}\sX$ is representable by an $(n-1)$-geometric stack\footnote{I.e.\ the base change with respect to any affine point is $(n-1)$-geometric.};
    \item (smooth atlas) there exist $\sU \simeq \sqcup_I U_i$ a disjoint union of affine schemes and a morphism
    \[
        f:\sU \ra \sX
    \]
    such that $f$ is $(n-1)$-representable and smooth.
\end{defnlist}
\end{parag}

\begin{parag}
A prestack (or morphism of prestacks) is said to be \emph{geometric} if it is $n$-geometric for some $n \geq 0$.
\end{parag}

\begin{parag}
A morphism $f:\sX \ra \sY$ of prestacks is \emph{$n$-geometric} if for every affine scheme $S \ra \sY$ the pullback $\sX\underset{\sY}{\times}S \ra S$ is $n$-geometric, and an $n$-geometric morphism $f$ of prestacks is \emph{smooth} (resp.\ \emph{\'etale}) if for every affine point $S \ra \sY$ the pullback $\sX\underset{\sY}{\times}S \ra S$ is smooth (resp.\ \'etale), i.e.\ for any atlas $\sU=\sqcup_I U_i \ra \sX\underset{\sY}{\times}S$ each of the composites $U_i \ra S$, which is $(n-1)$-geometric, is smooth (resp.\ \'etale).
\end{parag}

\begin{rem}
These definitions satisfy the usual compatibility, e.g.\ an $n$-geometric prestack (or morphism) is also $(n+1)$-geometric and similarly for the notion of smoothness\footnote{Cf.\ \cite[Chapter 2, \S 4.2]{GRI} for a similar discussion.}.
\end{rem}

\begin{rem}
Our convention regarding geometricity follows the definition taken in \cite[Chapter 5]{DAG}. In particular, its relation to the concept of $n$-Artin stack of \cite{GRI} is as follows: 
\[
    \mbox{For $n\geq 0$, any $n$-Artin stack is $n$-geometric and any $n$-geometric stack is $(n+1)$-Artin.}
\]
\end{rem}

\begin{rem}
Also notice that regarding Lurie's definition in \cite[\S 1.4 and \S 1.6.8]{SAG} we have that a $0$-geometric stack is equivalent to a spectral Deligne--Mumford $0$-stack, i.e.\ a spectral Deligne--Mumford $\sX$, whose underlying classical prestack $\classical{\sX}$ is $0$-truncated (see\cite[Theorem 1.7]{Porta-Comparison}).
\end{rem}

\begin{parag}
A prestack $\sX$ is said to be a sheaf for the \'etale topology if for every \'etale cover $T \ra S$ the canonical morphism
\[
    \sX(S) \ra \lim_{\Delta^{\rm op}}\sX((T/S)^{\bullet})
\]
is an equivalence, where $(T/S)^{\bullet}$ denotes the \v{C}ech nerve of $T \ra S$.

\end{parag}

\begin{parag}
\label{subsubsec:fp-convention-n-coconnective}
Let $\Schaffnfp$ denote the category of \emph{$n$-coconnective affine schemes of finite presentation}, i.e.\ the spectrum of rings $A$ which are compact objects in the category of $n$-truncated $\tau^{\geq -n}(R)$-algebras.
\end{parag}

\begin{parag}
\label{subsubsec:fp-convention-general}
A morphism of affine schemes $\Spec\,A = T \ra S =\Spec\,R$ is of \emph{finite presentation} if $A$ is a retract of a finite colimit of $R^N$ for some $N \geq 0$, equivalently $T$ is a co-compact object of $\Schaff_{/S}$. We let $\Schafffp$ denote the category of affine scheme of finite presentation over $S$.
\end{parag}

\begin{parag}
\label{subsubsec:afp-convention}
An affine scheme $\Spec\,A = T \ra S =\Spec\,R$ is \emph{almost finite presentation} if $\tau^{\leq n}(T) := \Spec\,\tau^{\geq -n}(A)$ is of finite presentation for every $n\geq 0$. We let $\Schaffafp$ denote the category of affine schemes almost of finite presentation over $S$. 
\end{parag}

\begin{parag}
\label{subsubsec:fp-for-Noetherian-base}
When the base $S = \Spec\,R$ is Noetherian, i.e.\ 
\begin{condlist}
    \item $H^0(R)$ is Noetherian,
    \item each $H^i(R)$ is a finitely generated $H^0(R)$-module;
\end{condlist}
the above finiteness conditions simplify as follows:

\begin{itemize}
    \item We refer to condition \ref{subsubsec:fp-convention-n-coconnective} on $n$-coconnective affine schemes as \emph{finite type} and denote its category by $\Schaffnft$. Concretely, $T=\Spec\,A \in \Schaffnft$ if:
    \begin{condlist}
        \item $H^0(A)$ is a finitely generated discrete $H^0(R)$-algebra;
        \item $H^i(A)$ is a finitely generated $H^0(A)$-module;
        \item $H^{-i}(A) = 0$, for every $i > n$.
    \end{condlist}
    \item We refer to condition \ref{subsubsec:fp-convention-general} as \emph{affine schemes of finite type} and denote the category of such by $\Schaffft$. \item We refer to condition \ref{subsubsec:afp-convention} as \emph{affine schemes almost of finite type} and denote this category by $\Schaffaft$.
\end{itemize}
\end{parag}

\begin{parag}
\label{subsubsec:eventually-coconnective-ft-affine-schemes}
In the case of a Noetherian base, we will often use the subcategory $\Schaffconvft \subset \Schaffft$ of \emph{eventually coconnective affine schemes of finite type}, i.e.\ $T \in \Schaffconvft$ if $\tau^{\geq n}(T) \overset{\simeq}{\ra}T$ for some $n \geq 0$. Notice that when $S$ is itself eventually coconnective, e.g.\ the spectrum of a field, this condition is automatic.
\end{parag}

\begin{parag}
\label{subsubsec:lafp-prestack}
A prestack $\sX$ is \emph{locally almost of finite presentation} if for every $n \geq 0$ the restriction
\begin{equation}
    \label{eq:n-truncation-prestack-lfp}    
    {^{\leq n}\sX}:\Schaffnop \ra \Spc
\end{equation}
preserves filtered colimits, i.e.\ sends (co-)filtered limits of affine schemes to filtered colimits of spaces.
\end{parag}

\begin{parag}
\label{subsubsec:laft-prestack}
When the base is Noetherian we refer to condition \ref{subsubsec:lafp-prestack} as \emph{locally almost of finite type}.
\end{parag}

\begin{parag}
In particular, we say that $\classical{\sX}:= {^{\leq 0}\sX}$ is \emph{locally of finite presentation (or type)} if the functor
\[
    \classical{\sX}: \clSchaffop := ({^{\leq 0}\Schaff})^{\rm op} \ra \Spc
\]
preserves filtered colimits.
\end{parag}

\begin{parag}
A ring $B$ is a \emph{complete Noetherian local ring} if
\begin{defnlist}
    \item(Noetherian) $B$ is a Noetherian ring; 
    \item(local) $H^0(B)$ is a local ring with maximal ideal $\fm_0$;
    \item(classically complete)\footnote{Notice that by \cite[Remark 7.3.6.7]{SAG} this is equivalent to requiring that $B$ is $\fm_0$-adically complete in the sense of \cite[Definition 7.3.1.1]{SAG}} $H^0(B)$ is $\fm_0$-adically complete, i.e.\ 
    \[
        H^0(B) \overset{\simeq}{\ra} \lim_{n \geq 1}H^0(B)/\fm_0^n.
    \]
\end{defnlist}
\end{parag}

\begin{parag}
A prestack $\sX$ is said to be \emph{integrable} if for any $B$ a complete Noetherian local ring with maximal ideal $\fm$\footnote{Here $\fm \subset B$ is a lift the maximal ideal $\fm_0 \subset H^0(B)$.} the canonical morphism
\[
    \sX(\Spec\,B) \overset{\simeq}{\ra} \lim_{n \geq 1}\sX(\Spec(B/\fm^n));
\]
is an isomorphism.
\end{parag}

\begin{parag}
A ring $A$ is said to be \emph{Artinian} if
\begin{defnlist}
    \item $H^0(A)$ is Artinian;
    \item $H^i(A)$ is a finitely generated $H^0(A)$-module, for all $i \in \bZ$;
    \item $H^i(A) = 0$ for $i \ll 0$. 
\end{defnlist}
An Artinian ring $A$ is \emph{local} if $H^0(A)$ is local.
\end{parag}

\begin{parag}
We say that a prestack $\sX$ is \emph{convergent} if for every affine scheme $S$ one has an equivalence
\[
    \sX(S) \ra \lim_{n \geq 0}\sX(\tau^{\leq n}(S)).
\]
\end{parag}

\begin{parag}
\label{subsubsec:truncated-prestack}
Given a prestack $\sX$ we denote by $\classical{\sX}:\clSchaffop \ra \Spc$ its truncation to classical affine schemes. Moreover, we say that $\classical{\sX}$ is \emph{$n$-truncated} if for every classical affine scheme $T_0$ one has $\classical{\sX}(T_0)$ is an $n$-truncated space.
\end{parag}

\begin{parag}
A space $X$ is $n$-truncated if 
\[
    \pi_k(X) = 0
\]
for all $k > n$.
\end{parag}

\subsection{Variants}

In this section we list many variants on the \hyperref[thm:main-result]{Main Theorem}.

Theorem \ref{thm:representability-natural} is essentially the version proved in \cite[Theorem 7.1.6]{DAG} and in our opinion is the most ``natural'' statement of the result.

\begin{thmannounce}
\label{thm:representability-natural}
Consider a map $p:\sX \ra S$, where $\sX$ is a prestack and $S$ is a Grothendieck affine scheme, then
\[
    \sX\mbox{ is an }n\mbox{-geometric stack locally almost of finite presentation}
\]
if and only if the following conditions are satisfied:
    \begin{enumerate}
    \item $\sX$ is a sheaf with respect to the \'etale topology;
    \item $\sX$ is locally almost of finite presentation;
    \item $\sX$ is integrable;
    \item[(iv)'\textsubscript{all}] $\sX$ is infinitesimally cohesive;
    \item[(v)\textsubscript{all}] $\sX$ admits a cotangent complex;
    \item[(vi)] $\sX$ is convergent;
    \item[$(vii)_n$] the underlying classical prestack $\classical{\sX}$ is $n$-truncated.
\end{enumerate}
\end{thmannounce}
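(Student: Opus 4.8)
The plan is to prove the two implications separately. The forward implication ($n$-geometric and locally almost of finite presentation $\Rightarrow$ (i)--$(vii)_n$) is a matter of closure properties: a disjoint union of affine schemes almost of finite presentation over $S$ satisfies all of (i)--$(vii)_0$ outright (integrability of affines is Grothendieck existence; infinitesimal cohesion and convergence are formal; an affine over $S$ has a cotangent complex concentrated in degrees $\leq 0$), and each of (i), (ii), (iii), $(iv)'_{\rm all}$, $(v)_{\rm all}$, (vi) descends along a smooth $(n-1)$-representable surjection $\sU\ra\sX$ from the \v{C}ech groupoid $\sU\times_{\sX}\cdots\times_{\sX}\sU$, whose terms are $(n-1)$-geometric and locally almost of finite presentation and so inherit the conditions by induction on $n$; the cotangent complex of $\sX$ is reconstructed from $L_{\sU}$ and $L_{\sU/\sX}$ by the same descent, and $(vii)_n$ is immediate from \S\ref{subsubsec:n-geometric-defn}.

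The substance is the converse, and the plan here is Artin's strategy run by induction on $n$, the derived content being carried by (iii), $(iv)'_{\rm all}$, $(v)_{\rm all}$, (vi). The first step is to show that the classical truncation $\classical{\sX}$ is a classical $n$-geometric stack over $\classical{S}=\Spec H^0(R)$. Restricting the hypotheses to classical test schemes recovers Artin's criteria for $\classical{\sX}$: (i) is the sheaf axiom, (ii) is local finite presentation, $(iv)'_{\rm all}$ supplies the Schlessinger--Rim homogeneity for small extensions, $(v)_{\rm all}$ provides a deformation--obstruction theory with the finiteness Artin needs (deformations and obstructions along a square-zero extension by a finite module $M$ are computed from $\Map(L_{\classical{\sX}},M)$, finite because $L_{\classical{\sX}}$ is almost perfect by (ii)), and (iii) is effectivity of formal deformations. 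Artin approximation---available because $H^0(R)$ is a $G$-ring---then yields, around every field-valued point, a smooth map to $\classical{\sX}$ from a finite-type classical affine scheme, and together with (i), $(vii)_n$ and the inductive treatment of the diagonal below this gives that $\classical{\sX}$ is a classical $n$-geometric stack (in the base case $n=0$ this step is essentially Artin's representability theorem for algebraic spaces).

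The second step upgrades this to a statement about $\sX$ itself. Fix a classical smooth atlas $\classical{\sU_0}=\sqcup_I\Spec A_{0,i}\ra\classical{\sX}$ and work one component $x_0:\Spec A_0\ra\classical{\sX}$ at a time. Since $A_0$ is classical we have $\sX(\Spec A_0)=\classical{\sX}(\Spec A_0)$, so $x_0$ is also a point of $\sX$; the aim is to thicken $A_0$ to a connective $R$-algebra $A$ with $\classical{A}=A_0$ carrying a map $x:\Spec A\ra\sX$ that refines $x_0$ and is \emph{smooth} in the derived sense, i.e.\ has $L_{\Spec A/\sX}$ finite projective in degree $0$. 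I would build $A$ by induction up its Postnikov tower: at each stage the relative cotangent complex of the partial chart carries unwanted cohomology concentrated in one new negative degree, which prescribes the next homotopy module $\pi_{m+1}(A)$ (finite over $A_0$); one then chooses the square-zero extension passing to the next stage so that $x$ extends \emph{and} that cohomology is cancelled---the two requirements being compatible is the crux, and it uses that $\sX$ admits a cotangent complex and is infinitesimally cohesive. Convergence (vi) assembles $A$ and $x$ from the tower, and in the limit $L_{\Spec A/\sX}$ is finite projective in degree $0$; condition (ii) forces $A$ to be almost of finite presentation. The resulting $\sU:=\sqcup_I\Spec A_i\ra\sX$ is then $(n-1)$-representable (by the induction applied to the diagonal), smooth by construction, and surjective because it is so on classical truncations and $\sX$ is laft and convergent; for $n=0$ one also needs the chart affine, which it is. Finally, for $n\geq 1$ one checks that the diagonal $\Delta_{\sX/S}:\sX\ra\sX\times_S\sX$, viewed as a prestack over $\sX\times_S\sX$, again satisfies (i)--(vii) with $n$ lowered by one---the sheaf axiom, convergence, integrability and infinitesimal cohesion pass to fibre products and to the diagonal, the cotangent complex of $\Delta_{\sX/S}$ is the shift of $L_{\sX/S}$, and $\classical{\Delta_{\sX/S}}$ is $(n-1)$-truncated because its fibres are path spaces of the $n$-truncated spaces $\classical{\sX}(T)$---so the inductive hypothesis makes $\Delta_{\sX/S}$ $(n-1)$-geometric. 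Organising the induction through the diagonal is what lets us avoid assuming its geometricity as a hypothesis.

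The hard part will be the second step: one must check that the Postnikov-tower construction can be performed compatibly over a union of charts covering all of $\classical{\sX}$, that (ii) genuinely forces the derived ring $A$ to be almost of finite presentation (which needs control of $L_{\Spec A/\sX}$ at each stage), and---most delicately---that the choices at each stage can be arranged so that the limiting map $\sU\ra\sX$ is honestly smooth in the sense of \S\ref{subsubsec:n-geometric-defn} and not merely formally smooth. It is precisely at this point that (iii), $(v)_{\rm all}$ and (vi) enter essentially, on top of the classical input of the first step.
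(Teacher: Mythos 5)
Your plan is correct in outline but takes a genuinely different route from the paper. The paper proves Theorem \ref{thm:representability-natural} as a short reduction (\S \ref{subsubsec:Main-Thm-and-Thm-A}): necessity comes from Propositions \ref{prop:n-geometric-is-integrable}, \ref{prop:n-gometric-is-convergent}, \ref{prop:n-geometric-has-deformation-theory} and Lemma \ref{lem:n-geometric-n-truncated}, and sufficiency follows because $(iv)'_{\rm all}+(v)_{\rm all}+(vi)$ imply the weaker hypothesis $(iv)_{\rm triv}$ of the \hyperref[thm:main-result]{Main Theorem}. All the real work is then in the Main Theorem, whose proof builds the derived atlas \emph{directly}: at each field-valued point of $\sX$ one constructs a formal chart by iterated square-zero extensions (Construction \ref{cons:formal-nbd-of-a-point}), algebraizes it via integrability, makes it almost of finite presentation via Popescu's theorem (Proposition \ref{prop:approximate-chart}), upgrades it to a formally smooth chart (Proposition \ref{prop:smooth-chart}), and spreads it out over Henselizations (Theorem \ref{thm:etale-surjection}); only the base case $n=-2$ of the induction is delegated to a ``classical truncation determines the derived structure'' statement (Theorem \ref{thm:classical-representable}). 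You instead propose to first prove that $\classical{\sX}$ is a classical $n$-geometric stack by classical Artin representability, and then thicken a classical atlas up its Postnikov tower. That is a legitimate alternative (essentially Pridham's strategy; your Step 2 is the $n$-geometric analogue of the argument behind Theorem \ref{thm:classical-representable}). What it buys is a clean separation of classical and derived input; what it costs is a full classical representability theorem for higher ($n\geq 2$) stacks over a $G$-ring taken as a black box, which the paper never needs because its chart construction works at the derived level from the start and uses the $G$-ring hypothesis only through Popescu.

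Two caveats. First, ``the forward implication is a matter of closure properties'' undersells integrability: the limit $\lim_{m}\sX(\Spec(B/\fm^m))$ does not obviously commute with the geometric realization of the \v{C}ech nerve, and the paper needs a genuine argument (the equivalences of finite \'etale sites over the Henselian rings $B/\fm^m$ in Proposition \ref{prop:n-geometric-is-integrable}) rather than naive descent; also, integrability of an affine target is just completeness of $B$, not Grothendieck existence. Second, in your Step 2 what forces $A$ to be almost of finite presentation is not condition (ii) applied to $A$, but the almost perfectness of the relative cotangent complexes appearing at each stage of the tower (via Proposition \ref{prop:properties-cotangent-complex}, which is where (ii) and $(v)_{\rm all}$ enter); and, as you correctly flag, arranging simultaneously that $x$ extends and that the unwanted relative cotangent cohomology is cancelled is exactly where $(iv)'_{\rm all}$ is consumed.
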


\begin{parag}
\label{subsubsec:Main-Thm-and-Thm-A}
Theorem \ref{thm:representability-natural} follows from Theorem \ref{thm:main-result}. If $\sX$ is $n$-geometric then it admits corepresentable deformation theory by Proposition \ref{prop:n-geometric-has-deformation-theory}, so it satisfies conditions (iv)'\textsubscript{all}, (v)\textsubscript{all} and (vi) from Theorem \ref{thm:representability-natural}. Conversely, condition (iv)'\textsubscript{all}, (v)\textsubscript{all} and (vi) clearly imply condition (iv)\textsubscript{triv}, see Proposition \ref{prop:equivalent-formally-cohesive}; so the sufficiency of the conditions of Theorem \ref{thm:representability-natural} follows from those of Theorem \ref{thm:main-result}.
\end{parag}

\begin{rem}
\label{rem:integrable-from-classical-integrable}
In \cite{DAG} condition (iii) in Theorem \ref{thm:representability-natural} is substituted by the weaker: 
\begin{enumerate}
    \item[$(iii)_{\rm c\ell}$] $\classical{\sX}$ is integrable, i.e.\ for any local complete discrete Noetherian ring $B_0$ with maximal ideal $\fm_0 \subset B_0$ the canonical map
    \[
        \classical{\sX}(\Spec\,B_0) \overset{\simeq}{\ra} \lim_{n \geq 1}\classical{\sX}(\Spec\,B_0/\fm^n_0)
    \]
    is an equivalence.
\end{enumerate}
When $\sX$ admits deformation theory, condition (iii)\textsubscript{$\rm c\ell$} implies (iii) by \cite[Proposition 7.1.7]{DAG}.
\end{rem}

\begin{rem}
\label{rem:natural-plus-diagonal}
In fact, if $\sX$ satisfies the conditions of Theorem \ref{thm:representability-natural} plus the extra condition 
\begin{enumerate}
    \item[(viii)] the diagonal morphism $\sX \ra \sX \underset{S}{\times}\sX$ is geometric,
\end{enumerate}
then we can directly apply Theorem \ref{thm:etale-surjection} to obtain that $\sX$ is geometric.
\end{rem}

\begin{parag}
Theorem \ref{thm:representability-laft-cotangent-complex} helps simplify condition (ii) by restricting it to (ii)\textsubscript{$\rm c\ell$}, which is easier to verify since it only depends on the data of $\classical{\sX}$.
\end{parag}

\begin{thmannounce}
\label{thm:representability-laft-cotangent-complex}
Consider a map $p:\sX \ra S$, where $\sX$ is a prestack and $S$ is a Grothendieck affine scheme, then
\[
    \sX\mbox{ is an }n\mbox{-geometric stack locally almost of finite presentation}
\]
if and only if the following conditions are satisfied:
    \begin{enumerate}
    \item $\sX$ is a sheaf with respect to the \'etale topology;
    \item[$(ii)_{\rm c\ell}$] $\classical{\sX}$ is locally of finite presentation;
    \item[(iii)] $\sX$ is integrable;
    \item[$(iv)'_{\rm all}$] $\sX$ is infinitesimally cohesive;
    \item[$(v)^{\rm laft}_{\rm all}$]\footnote{Here the notation for the subcript indicates which affine points we require the cotangent complex to exists at, whereas the superscript indicates that we require that on \emph{eventually coconnective affine schemes of finite type} we require that the cotangent complex is a locally almost of finite type object. One could also have considered variants of this condition where we are required to check the laft property of the cotangent complex on all points, but these are not relevant for us in this work.} $\sX$ admits a cotangent complex and for every $(T_0\overset{x_0}{\ra}\sX) \in (\clSchaffft)_{/\sX}$ one has
    \[
        T^*_{x_0}(\sX) \in \Coh(T_0)^-;
    \]
    \item[(vi)] $\sX$ is convergent;
    \item[$(vii)_n$] the underlying classical prestack $\classical{\sX}$ is $n$-truncated.
\end{enumerate}
\end{thmannounce}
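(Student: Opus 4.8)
The plan is to derive Theorem \ref{thm:representability-laft-cotangent-complex} from Theorem \ref{thm:representability-natural}. Comparing the two lists of conditions, the difference is that (ii) is weakened to $(ii)_{\rm c\ell}$ while $(v)_{\rm all}$ is strengthened to $(v)^{\rm laft}_{\rm all}$ by the coherence clause ``$T^*_{x_0}(\sX)\in\Coh(T_0)^-$ for every $(T_0\overset{x_0}{\ra}\sX)\in(\clSchaffft)_{/\sX}$''. So there are exactly two things to check: for the ``only if'' direction, that an $n$-geometric stack locally almost of finite presentation satisfies the coherence clause; and for the ``if'' direction, that in the presence of the other hypotheses, $(ii)_{\rm c\ell}$ together with the coherence clause forces condition (ii). Everything else in the two statements is identical, and $(v)^{\rm laft}_{\rm all}$ in particular includes the assertion that $\sX$ admits a cotangent complex.

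For the ``only if'' direction, assume $\sX$ is $n$-geometric and locally almost of finite presentation. By \S\ref{subsubsec:Main-Thm-and-Thm-A} it satisfies every condition of Theorem \ref{thm:representability-natural}; in particular it satisfies (i), (iii), $(iv)'_{\rm all}$, (vi), $(vii)_n$, admits a cotangent complex, and satisfies (ii), which trivially gives $(ii)_{\rm c\ell}$. Moreover the cotangent complex of $\sX$ is almost perfect: this follows from Proposition \ref{prop:n-geometric-has-deformation-theory} together with the local almost finite presentation hypothesis, by induction on $n$, using smooth descent and the facts that the atlas $U\ra S$ is locally almost of finite presentation (so $L_{U/S}$ is almost perfect) and that smooth representable morphisms have perfect relative cotangent complexes. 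Finally, for $(T_0\overset{x_0}{\ra}\sX)\in(\clSchaffft)_{/\sX}$ the scheme $T_0$ is classical and of finite type over the Noetherian scheme $S$, hence Noetherian, and over a Noetherian ring almost perfect complexes are precisely the cohomologically bounded-above complexes with coherent cohomology; thus $T^*_{x_0}(\sX)\in\Coh(T_0)^-$.

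For the ``if'' direction it remains to prove the finiteness criterion: if $\sX$ is convergent, infinitesimally cohesive, admits a cotangent complex, $\classical{\sX}$ is locally of finite presentation, and the coherence clause holds, then $\sX$ is locally almost of finite presentation. One shows by induction on $m\geq 0$ that ${}^{\leq m}\sX$ preserves filtered colimits, the base case $m=0$ being $(ii)_{\rm c\ell}$. Since $S$ is Noetherian, every $m$-coconnective affine scheme is a cofiltered limit of $m$-coconnective affine schemes of finite type, so it suffices to test the filtered-colimit property against such. For the inductive step, given a filtered system $\{\Spec A_i\}$ of $m$-coconnective finite-type schemes, each truncation map $A_i\to\tau^{\leq m-1}A_i$ is a square-zero extension by a shift of the finitely generated module $H^{-m}(A_i)$, and infinitesimal cohesiveness presents $\sX(\Spec A_i)$ as a fiber of a map built from $\sX(\Spec\tau^{\leq m-1}A_i)$ and the derivation space $\Map(x_i^*L_{\sX},H^{-m}(A_i)[j])$ for the appropriate integer $j$. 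The inductive hypothesis controls the $\tau^{\leq m-1}$-terms, convergence passes from the truncations back to $A_i$ and to the colimit, and forming maps out of the coherent bounded-above complexes $x_i^*L_{\sX}$ into finitely generated modules over the finite-type rings commutes with the relevant filtered colimit; assembling these shows ${}^{\leq m}\sX$ preserves filtered colimits. Applying Theorem \ref{thm:representability-natural} then finishes the proof.

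The main obstacle I expect is this inductive bootstrap: tracking the square-zero presentations and the degree shifts in the fiber sequences supplied by infinitesimal cohesiveness, and in particular verifying that the compactness needed to commute the derivation spaces past the filtered colimits is provided solely by coherence of the cotangent complex \emph{at finite-type points} — which is the only hypothesis available — the reduction to finite-type test rings over the Noetherian base $S$ being exactly what makes this enough. This step is a geometric repackaging of Lurie's criterion relating local almost finite presentation to finiteness of the cotangent complex.
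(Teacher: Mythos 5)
Your top-level reduction to Theorem \ref{thm:representability-natural} is exactly the paper's strategy (see \S \ref{subsubsec:Thm-A-implies-Thm-B}), and both halves of your argument are sound in outline; the difference lies only in how the two new ingredients are discharged. For the ``only if'' direction the paper quotes \cite[Chapter 1, Lemma 3.5.2]{GRII} to place $T^*_{x_0}(\sX)$ in $\QCoh(T_0)^-\cap\Pro(\QCoh(T_0)^-)_{\rm laft}\simeq\Coh(T_0)^-$, while you go through almost perfectness of the cotangent complex plus Noetherianity of $T_0$; note that Proposition \ref{prop:properties-cotangent-complex}(a) already gives almost perfectness directly from local almost finite presentation, so your induction on $n$ over smooth atlases is unnecessary. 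For the ``if'' direction the paper simply invokes \cite[Chapter 1, Theorem 9.1.2]{GRII}, whereas you sketch a proof of that theorem by induction on coconnectivity using square-zero extensions and infinitesimal cohesiveness. That is indeed how such finiteness criteria are proved, but the one point your sketch leaves open is the crux: at stage $m$ the derivation spaces are formed at points $x_i:\Spec\,\tau^{\leq m-1}A_i\ra\sX$, which are eventually coconnective finite-type but \emph{not classical}, while condition $(v)^{\rm laft}_{\rm all}$ asserts coherence of $T^*_{x_0}(\sX)$ only at classical finite-type points. To commute those mapping spaces past the filtered colimit you must first upgrade the hypothesis: by base change for the cotangent complex one has $T^*_{x_i}(\sX)\otimes_{\tau^{\leq m-1}A_i}H^0(A_i)\simeq T^*_{x_{i,0}}(\sX)\in\Coh^-$, and one then needs the fact that a bounded-above module over a connective Noetherian ring is almost perfect as soon as its base change to $\pi_0$ is (a Nakayama-type d\'evissage, cf.\ \cite[\S 2.7.3]{SAG}). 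With that lemma supplied your inductive step closes; without it, the compactness you appeal to is not yet available at the points where you use it.
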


\begin{rem}
By the first part of the proof of Lemma \ref{lem:cotangent-complex-from-ft-points}, we can substitute condition (v)$^{\rm laft}_{\rm all}$ by:
\begin{enumerate}
    \item[$(v)^{\rm laft}_{\rm ec}$] for every $(T \overset{x}{\ra} \sX) \in (\Schaffconv)_{/\sX}$, $\sX$ admits a cotangent complex at $x$ and for every $(T_0\overset{x_0}{\ra}\sX) \in (\clSchaffft)_{/\sX}$ one has
    \[
        T^*_{x_0}(\sX) \in \Coh(T_0)^-.
    \]
\end{enumerate}
\end{rem}

\begin{parag}
\label{subsubsec:Thm-A-implies-Thm-B}
Theorem \ref{thm:representability-natural} implies Theorem \ref{thm:representability-laft-cotangent-complex}. First notice that if $\sX$ is a locally almost of finite type and $n$-geometric stack, by \cite[Lemma 3.5.2]{GRII} for every $(T_0\overset{x}{\ra}\sX) \in (\clSchaffft)_{/\sX}$ a classical affine scheme of finite type one has\footnote{See \cite[Chapter 1, \S 3.4.1]{GRII} for a definition of the category $\Pro(\QCoh(T_0)^-)_{\rm laft}$, we just notice that when $T$ is an eventually coconnective affine scheme one has an equivalence $\Pro(\QCoh(T)^-)_{\rm laft} \simeq \Pro(\Coh(T))$.}
\[
    T^*_{x_0}(\sX) \in \QCoh(T_0)^{-} \cap \Pro(\QCoh(T_0)^-)_{\rm laft} \simeq \QCoh(T_0)^{-} \cap \Pro(\Coh(T_0)) \simeq \Coh(T_0)^-.
\]
Now, given $\sX$ satisfying the conditions of Theorem \ref{thm:representability-laft-cotangent-complex}, by \cite[Chapter 1, Theorem 9.1.2]{GRII} $\sX$ is locally almost of finite presentation, hence satisfies condition (ii) of Theorem \ref{thm:representability-natural}.
\end{parag}

\begin{rem}
\label{rem:descent-from-classical-descent}
Notice that we can simplify Theorem \ref{thm:representability-laft-cotangent-complex} further by instead of (i) requiring the weaker:
\begin{enumerate}
    \item[$(i)_{\rm c\ell}$] $\classical{\sX}$ is an \'etale sheaf.
\end{enumerate}
Indeed, by Remark \ref{rem:n-truncated-and--n-connective} we know that the cotangent complex of $\sX$ is $(-n)$-connective, thus we can apply \cite[Chapter 1, Proposition 8.2.2]{GRII} which gives that $\sX$ satisfy \'etale descent.
\end{rem}

\begin{parag}
The last variant that we formulate tries to simplify how one checks condition ${\rm (v)}^{\rm laft}_{\rm all}$ in Theorem \ref{thm:representability-laft-cotangent-complex}.
\end{parag}

\begin{thmannounce}
\label{thm:cohomological-cotangent-complex-condition}
Consider a map $p:\sX \ra S$, where $\sX$ is a prestack and $S$ is a Grothendieck affine scheme, then
\[
    \sX\mbox{ is an }n\mbox{-geometric stack locally almost of finite presentation}
\]
if and only if the following conditions are satisfied:
    \begin{enumerate}
    \item[$(i)_{\rm c\ell}$] $\classical{\sX}$ is a sheaf with respect to the \'etale topology;
    \item[$(ii)_{\rm c\ell}$] $\classical{\sX}$ is locally of finite presentation;
    \item[(iii)] $\sX$ is integrable;
    \item[$(iv)_{\rm global}$] $\sX$ is $W_{\rm all}$-homogeneous;
    \item[$(\alpha)$] for every affine scheme $(S \overset{x}{\ra}\sX) \in \Schaff_{/\sX}$ the functor\footnote{We distiguished this condition from $(v)_{\rm all}$ which also require the existence of $T^*_x\sX$, here the existence is guaranteed by $(iv)_{\rm global}$.}
    \[
        \Map_{\Pro(\QCoh(S)^-)}(T^*_x\sX,-):\QCoh(S)^{\leq 0} \ra \Spc
    \]
    preserves co-filtered limits\footnote{Notice that this not automatic, since for $\sF \simeq \lim_{J}\sF_j \in \QCoh(S)^{\leq 0}$ one has the following formula
    \[
        \Map_{\Pro(\QCoh(S)^-)}(T^*_x\sX,\sF) := \colim_{I^{\rm op}}\Map_{\QCoh(S)^{-}}(\sG_i,\sF),
    \]
    where $T^*_x\sX \simeq \lim_I\sG_i$ is some presentation of $T^*_{x}\sX$ by objects $\sG_i \in \QCoh(S)^-$ and a co-filtered diagram $I$.
    };
    \item[($\beta$)] for every $x_0:T_0\ra \classical{\sX}$ where $S_0$ is a classical affine scheme of finite type one has
    \[
        H^k(T^*_{x_0}(\sX)) \in \Coh(T_0)^{\heartsuit}
    \]
    for all $k \in \bZ$.
    \item[(vi)] $\sX$ is convergent;
    \item[$(vii)_n$] the underlying classical prestack $\classical{\sX}$ is $n$-truncated.
\end{enumerate}
\end{thmannounce}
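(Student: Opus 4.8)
The plan is to deduce Theorem \ref{thm:cohomological-cotangent-complex-condition} from Theorem \ref{thm:representability-laft-cotangent-complex} by showing that the two lists of conditions are equivalent: conditions $(ii)_{\rm c\ell}$, (iii), (vi) and $(vii)_n$ appear verbatim in both, so all of the work is in trading condition $(v)^{\rm laft}_{\rm all}$ of Theorem \ref{thm:representability-laft-cotangent-complex} for the package $(iv)_{\rm global}$, $(\alpha)$, $(\beta)$, and in passing between (i) and $(i)_{\rm c\ell}$.

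For the \emph{only if} direction I would proceed as follows. Assume $\sX$ is $n$-geometric and locally almost of finite presentation. Theorem \ref{thm:representability-laft-cotangent-complex} already furnishes (i), $(ii)_{\rm c\ell}$, (iii), (vi) and $(vii)_n$, and restricting (i) to classical affine schemes gives $(i)_{\rm c\ell}$. By Proposition \ref{prop:n-geometric-has-deformation-theory} the stack $\sX$ has corepresentable deformation theory; in particular it is $W_{\rm all}$-homogeneous, which is $(iv)_{\rm global}$, and at every affine point $(S \overset{x}{\ra} \sX)$ the pro-cotangent space $T^*_x\sX$ is corepresentable by a genuine object $\sG \in \QCoh(S)^-$, so $\Map_{\Pro(\QCoh(S)^-)}(T^*_x\sX,-) \simeq \Map_{\QCoh(S)^-}(\sG,-)$ preserves all limits and a fortiori cofiltered ones, which is $(\alpha)$. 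Finally, exactly as in \S \ref{subsubsec:Thm-A-implies-Thm-B} one invokes \cite[Lemma 3.5.2]{GRII} to see that $T^*_{x_0}(\sX) \in \Coh(T_0)^-$ for $(T_0 \overset{x_0}{\ra} \sX) \in (\clSchaffft)_{/\sX}$, which in particular gives $(\beta)$.

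For the \emph{if} direction --- the substantive one --- I would assume $(i)_{\rm c\ell}$--$(vii)_n$ and check the hypotheses of Theorem \ref{thm:representability-laft-cotangent-complex}, with (i) replaced by $(i)_{\rm c\ell}$. The new content is $(iv)'_{\rm all}$ and $(v)^{\rm laft}_{\rm all}$. First, $W_{\rm all}$-homogeneity $(iv)_{\rm global}$ is, by Definition \ref{defn:P-homogeneous}, the statement that $\sX$ carries the relevant class of cocartesian squares to cartesian ones; in particular $\sX$ is infinitesimally cohesive, which is $(iv)'_{\rm all}$, and $\sX$ admits pro-cotangent spaces at every affine point --- indeed a presentation $T^*_x\sX \simeq \lim_I \sG_i$ is already presupposed in the statement of $(\alpha)$. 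Next I would use $(\alpha)$ to upgrade each pro-cotangent space to an honest one: preservation of cofiltered limits by $\Map_{\Pro(\QCoh(S)^-)}(T^*_x\sX,-)$ on $\QCoh(S)^{\leq 0}$ is precisely the criterion of \cite[Chapter 1]{GRII} for $T^*_x\sX$ to lie in $\QCoh(S)^-$, i.e.\ for $\sX$ to admit a cotangent complex at $x$; letting $x$ range over $\Schaff_{/\sX}$ gives the first clause of $(v)^{\rm laft}_{\rm all}$. Then, since $\sX$ now has deformation theory and $\classical{\sX}$ is $n$-truncated, Remark \ref{rem:n-truncated-and--n-connective} shows $T^*_{x_0}(\sX)$ is $(-n)$-connective, hence cohomologically bounded above, for every $(T_0 \overset{x_0}{\ra} \sX) \in (\clSchaffft)_{/\sX}$; combined with $(\beta)$, which makes each $H^k(T^*_{x_0}(\sX))$ coherent, this gives $T^*_{x_0}(\sX) \in \Coh(T_0)^-$, the second clause of $(v)^{\rm laft}_{\rm all}$. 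The same $(-n)$-connectivity licenses the replacement of (i) by $(i)_{\rm c\ell}$, as in Remark \ref{rem:descent-from-classical-descent}. Theorem \ref{thm:representability-laft-cotangent-complex} then applies and $\sX$ is $n$-geometric and locally almost of finite presentation.

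I expect the main obstacle to be the upgrade from the pro-cotangent complex furnished by homogeneity to an honest cotangent complex: one must show that $(\alpha)$ forces the pro-object $T^*_x\sX$ to be essentially constant --- to lie in $\QCoh(S)^-$ rather than merely in $\Pro(\QCoh(S)^-)$ --- using the exactness of the pro-cotangent space functor supplied by $(iv)_{\rm global}$ together with the limit condition, and this is where the deformation-theoretic formalism of \cite{GRII} does the real work. The remaining identifications ($W_{\rm all}$-homogeneous $\Rightarrow$ infinitesimally cohesive, and $(\beta)$ together with $n$-truncatedness $\Rightarrow$ membership in $\Coh(T_0)^-$) should be formal.
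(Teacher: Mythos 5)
Your proof is correct and follows essentially the same route as the paper (\S\ref{subsubsec:cohomological-from-laft}): obtain deformation theory from $(iv)_{\rm global}$ and (vi) via Proposition \ref{prop:affine-homogeneous-is-deformation-theory}, use $(\alpha)$ and $(\beta)$ (together with $(vii)_n$ and Remark \ref{rem:n-truncated-and--n-connective}) to get $(v)^{\rm laft}_{\rm all}$, recover \'etale descent from $(i)_{\rm c\ell}$ via Remark \ref{rem:descent-from-classical-descent}, and conclude by Theorem \ref{thm:representability-laft-cotangent-complex}. The only (harmless) deviation is that you invoke $(\alpha)$ first, so that coherence of $T^*_{x_0}\sX$ follows formally from $(\beta)$ and boundedness for an honest object of $\QCoh(T_0)^-$, whereas the paper establishes it at the level of pro-objects via Lemma \ref{lem:coherent-cohomology-implies-coherent} (the order matters only because of the Warning following that lemma, which your ordering sidesteps).
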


\begin{parag}
\label{subsubsec:cohomological-from-laft}
Notice that Proposition \ref{prop:affine-homogeneous-is-deformation-theory} gives that $(\alpha)$ is equivalent to $\sX$ admitting deformation theory, that is $\sX$ satisfies (iv)'\textsubscript{all} and $\sX$ admits a \emph{pro}-cotangent complex, i.e.\ the weaker form of (v)\textsubscript{all}: 
\begin{enumerate}
    \item[$(v)^{\rm Pro}_{\rm all}$] $\sX$ admits a pro-cotangent complex at every $(S\overset{x}{\ra}\sX) \in \Schaff_{/\sX}$;
\end{enumerate}
In particular, given $x_0:S_0 \ra \sX$ a morphism from $S_0 \in \clSchaffft$ a classical affine scheme of finite type, one has
\[
    T^*_{x_0}(\sX) \in \Pro(\Coh(S_0)^-).
\]
Now by Lemma \ref{lem:coherent-cohomology-implies-coherent} conditions $(\alpha)$ and $(\beta)$ imply that each $T^*_{x_0}(\sX) \in \Coh(S_0)^-$, so that $\sX$ satisfies $(v)^{\rm laft}_{\rm all}$. Moreover, since $\sX$ admits deformation theory, Remark \ref{rem:descent-from-classical-descent} gives that $\sX$ satisfies \'etale descent. So we obtained the conditions from Theorem \ref{thm:representability-laft-cotangent-complex}.
\end{parag}

\begin{rem}
We notice that Theorem \ref{thm:cohomological-cotangent-complex-condition} is closely related to the results \cite[Theorem 7.5.1]{DAG} and \cite[Corollary 1.36]{MR3004173}. Moreover, in the formulation of Theorem \ref{thm:cohomological-cotangent-complex-condition} we can conceive of computing more directly defined cohomology groups (cf.\ \cite[Definition 1.13]{MR0277519} or \cite[\S 7.4]{DAG}) whose finiteness would guarantee the finiteness encoded in condition $(\beta)$. However, it seems to us that it is unlikely that in practice these other cohomology groups are easier to compute than $H^i(T^*_{x_0}\sX)$ given that the later has better formal properties when compared to the former.
\end{rem}

\begin{parag}
Figure \ref{fig:dependency-scheme} summarize all the implications involved in the different variants discussed in this section. We clarify that ``def'' (resp.\ ``corep-def'') stands for (resp.\ corepresentable) deformation theory and ``laft-def'' (resp.\ ``laft-corep-def'') stands for (resp.\ corepresentable) deformation theory satisfying the condition that the pro-cotangent (resp.\ cotangent) complex is a locally almost of finite type object.
\end{parag}

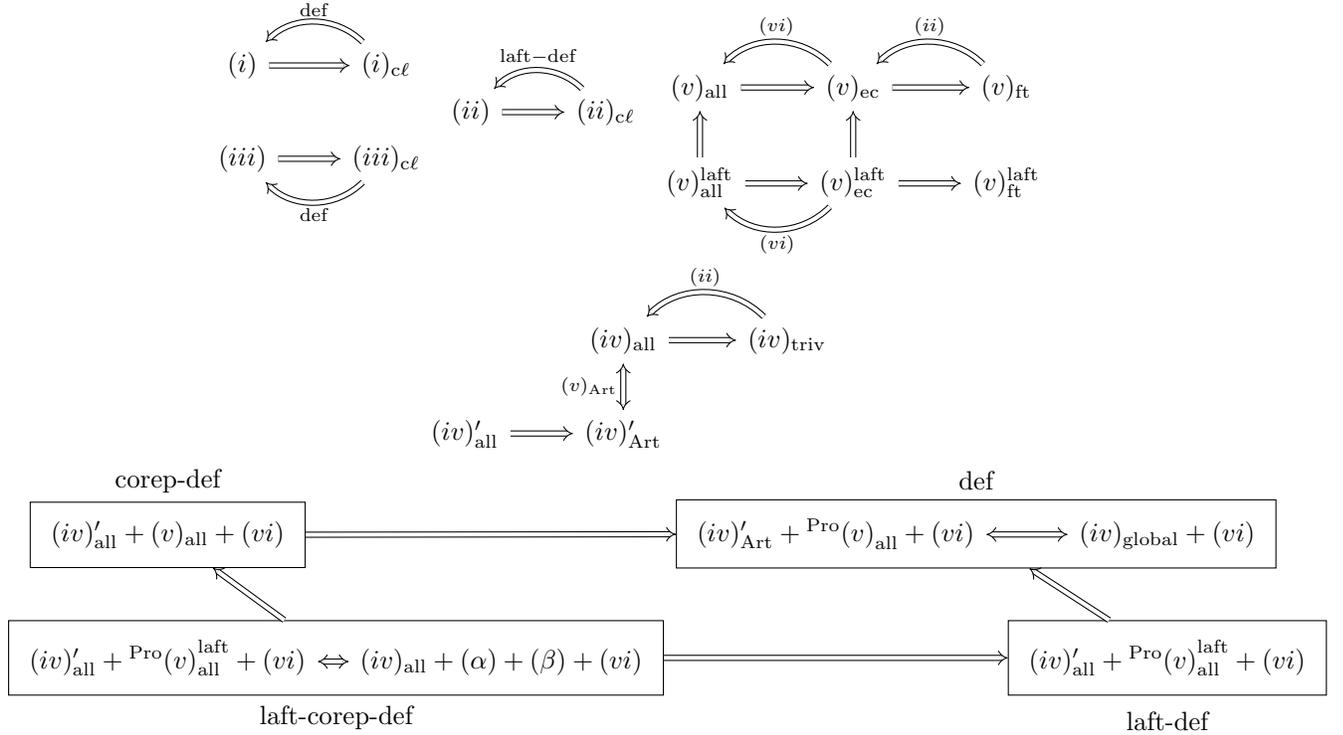
\begin{figure}[!h]
    \centering
    \begin{tikzcd}
    (i) \ar[Rightarrow]{r} & (i)_{\rm c\ell} \ar[Rightarrow,bend right = 45,"{\rm def}"']{l} \\
    (iii) \ar[Rightarrow]{r} & (iii)_{\rm c\ell} \ar[Rightarrow,bend left = 45,"{\rm def}"]{l} \\
    \end{tikzcd}
    \begin{tikzcd}
    (ii) \ar[Rightarrow]{r} & (ii)_{\rm c\ell} \ar[Rightarrow,bend right = 45,"{\rm laft-def}"']{l} \\
    \end{tikzcd}
    \begin{tikzcd}
    (v)_{\rm all} \ar[Rightarrow]{r} & (v)_{\rm ec} \ar[Rightarrow]{r} \ar[Rightarrow,bend right = 45,"(vi)"']{l} & (v)_{\rm ft} \ar[Rightarrow,bend right = 45,"(ii)"']{l} \\
    (v)^{\rm laft}_{\rm all} \ar[Rightarrow]{r} \ar[Rightarrow]{u} & (v)^{\rm laft}_{\rm ec} \ar[Rightarrow,bend left = 45,"(vi)"]{l} \ar[Rightarrow]{u} \ar[Rightarrow]{r} & (v)^{\rm laft}_{\rm ft}
    \end{tikzcd}
    \begin{tikzcd}
    & (iv)_{\rm all} \ar[Rightarrow]{r} & (iv)_{\rm triv} \ar[Rightarrow,bend right = 45,"(ii)"']{l} \\
    (iv)'_{\rm all} \ar[Rightarrow]{r} & (iv)'_{\rm Art} \ar[Leftrightarrow,"(v)_{\rm Art}"]{u} & 
    \end{tikzcd}
    \begin{tikzcd}[
      column sep=small, row sep=small,
    /tikz/execute at end picture={
    \node (corep-def) [rectangle, draw, fit=(A1),label={corep-def}] {};
    \node (def) [rectangle, draw, fit=(A2) (A3),"def"] {};
    \node (laft-corep-def) [rectangle, draw, fit=(B1) (B2),label={[yshift=-45pt]:laft-corep-def}] {};
    \node (laft-def) [rectangle, draw, fit=(B3),label={[yshift=-45pt]:laft-def}] {};
    \draw[->,double equal sign distance,-Implies] (corep-def) -- (def){};
    \draw[->,double equal sign distance,-Implies] (laft-corep-def) -- (corep-def){};
    \draw[->,double equal sign distance,-Implies] (laft-corep-def) -- (laft-def){};
    \draw[->,double equal sign distance,-Implies] (laft-def) -- (def){};
    }]
    |[alias=A1]| (iv)'_{\rm all} + (v)_{\rm all} + (vi) & & |[alias=A2]| (iv)'_{\rm Art} + {^{\rm Pro}(v)}_{\rm all} + (vi) \arrow[Leftrightarrow]{r} & |[alias=A3]| (iv)_{\rm global} + (vi) \\
    & & & \\
    & & & \\
    |[alias=B1]| (iv)'_{\rm all} + {^{\rm Pro}(v)}^{\rm laft}_{\rm all} + (vi) \arrow[Leftrightarrow]{r} & |[alias=B2]| (iv)_{\rm all}+(\alpha)+(\beta) +(vi) & & |[alias=B3]| (iv)'_{\rm all} + {^{\rm Pro}(v)}^{\rm laft}_{\rm all} + (vi) \\
    \end{tikzcd}
    \caption{Summary of dependency of conditions.}
    \label{fig:dependency-scheme}
\end{figure}

\subsection{Future questions}

\subsubsection{}

Probably the most interesting question to be understood at the moment is: 
\begin{question}
Is there a version of the \hyperref[thm:main-result]{Main Theorem} for $n$-geometric stacks over an arbitrary base, possibly even non-connective, where one considers the notion of fiber smooth in the definition of $n$-geometric stacks?
\end{question}

The author believes that a version of Theorem \ref{thm:representability-natural} to appear in Chapter 27 of \cite{SAG} will address this question.

\subsubsection{}

One of the initial motivations for this note was to relax as much as possible condition (v)\textsubscript{ft} in \hyperref[thm:main-result]{Main Theorem}. The bootstrapping result of Lemma 10.4 in \cite{Hall-Rydh-algebraicity} suggests that one could try to relax condition (v)\textsubscript{ft} to something as condition (v)\textsubscript{Art} in Corollary \ref{cor:formal-charts}. Unfortunately, with the current argument using approximate charts (Proposition \ref{prop:approximate-chart}) we could not relax this assumption. However, we still pose

\begin{question}
\label{que:relax-condition-(v)}
Can one relax condition (v)\textsubscript{ft} in \hyperref[thm:main-result]{Main Theorem}, so that we only need to check the existence the cotangent complex at field valued points?
\end{question}

\subsubsection{}

In checking the conditions to apply any form of the representability theorem we want to stress the distinction between knowing that the cotangent complex exists and computing it. In Theorem \ref{thm:main-result} and Theorem \ref{thm:representability-natural} we only need to know that a cotangent complex exists, whereas in Theorem \ref{thm:representability-laft-cotangent-complex} and Theorem \ref{thm:cohomological-cotangent-complex-condition} we actually need to know that the cotangent complex has a certain finiteness property. Condition (iv)\textsubscript{global} provides a good way to know that a pro-cotangent complex exists and condition ($\beta$) is simple enough to check when a cotangent space is actually locally almost of finite type. However we would like to answer:

\begin{question}
What is the simplest condition that we can impose instead of $(\alpha)$ above that together with (iv)\textsubscript{global} and (vi) guarantee the existence of \emph{corepresentable} deformation theory?
\end{question}

Theorem 17.5.4.1 of \cite{SAG} suggests that in the case where the base ring $R$ admits a dualizing complex, one can probably obtain $(\alpha)$ from some base change compatibility of the tangent spaces.

\subsubsection{}

Our theorem does not imply any version of the Main Theorem in \cite{Hall-Rydh-algebraicity}. Thus, it is natural to ask:

\begin{question}
Is there a version of Theorem \ref{thm:main-result} which implies the result of \cite{Hall-Rydh-algebraicity} when restricted to $1$-truncated classical prestacks?
\end{question}

\subsubsection{}
In \emph{loc. cit.} one has a DVR-homogeneity condition which plays a role in proving their version of Lemma \ref{lem:main-lemma} that allows to bootstrap homogeneity once the diagonal is representable. We wonder if a form of this condition in the derived setting could help to address Question \ref{que:relax-condition-(v)} above.

\subsubsection{}

Another point of possible investigation is how far one can simplify how to check the locally almost of finite type condition. Corollary 9.1.3 from \cite{GRII} gives a very strong tool to reduce condition (ii) to (ii)\textsubscript{$\rm c\ell$}, however one needs to know that $\sX$ has laft deformation theory, which then requires us to check the existence of the cotangent complex for affine schemes which are not of finite type. More succiently we would want to answer:

\begin{question}
Can we simplify condition (v)$^{\rm laft}_{\rm ec}$ in Theorem \ref{thm:representability-laft-cotangent-complex} to (v)$^{\rm laft}_{\rm ft}$?
\end{question}

\subsubsection{}
We notice that, probably, the answer to the above question is no when $\sX$ is not $n$-geometric, however if we further impose condition (viii) in Theorem \ref{thm:representability-laft-cotangent-complex} then by Lemma \ref{lem:main-lemma} we can relax (v)$^{\rm laft}_{\rm ec}$ to (v)$^{\rm laft}_{\rm ft}$.

\subsubsection{}

Finally, we mention that this whole note was written as a warm-up exercise in trying to proof a version of Theorem \ref{thm:representability-natural} for ind-schemes. We expect that by relaxing condition $(v)_{\rm all}$ to $(v)^{\rm Pro}_{\rm all}$ one can still prove that $\sX$ is a filtered colimit of $n$-geometric stacks. We are currently pursuing this question.

\subsection{Acknowledgements}

Our note clearly owns a huge debt to Lurie's result, since we essentially follow the same strategy to prove this version of the theorem, as well as use a lot of his foundational results. This work started from a desire to understand the extremely interesting work \cite{Hall-Rydh-algebraicity}, to which we also own a large amount. I would like to thank E.\ Elmanto for drawing the author's attention to the notes \cite{Alper-Mainz} that helped the author better understand the current status of the literature regarding the classical representability result. I would also like to thank X.\ He and M.\ McBreen for discussions on this subject and for giving me the opportunity to teach a seminar on derived algebraic geometry, which allowed me to better understand the representability result. Finally, I would like to thank N.\ Rozenblyum for conversations many years ago about this subject, including explaining how to obtain Theorem \ref{thm:representability-laft-cotangent-complex} from Theorem \ref{thm:representability-natural}.

\section{Derived analogues of classical notions}

\subsection{Homogeneity}

The discussion of this section is a straightforward extension of the concepts defined in \S 1 of \cite{Hall-Rydh-algebraicity} to the context of derived geometry.

\begin{defn}
    Given a morphism $f:T_1 \ra T_2$ of affine schemes we say that 
    \begin{defnlist}
        \item $f$ is a \emph{nilpotent embedding} if $\classical{f}:\classical{T_1} \ra \classical{T_2}$ is a closed embedding of classical affine schemes with nilpotent ideal of definition--let $W_{\rm nil.}$ denote the class of morphisms of affine schemes consisting of nilpotent embeddings;
        \item $f$ is a \emph{closed embedding} if $\classical{f}:\classical{T_1} \ra \classical{T_2}$ is a closed embedding of classical affine schemes--let $W_{\rm cl.}$ denote the class of morphisms of affine schemes consisting of nilpotent embeddings.
    \end{defnlist}
    We let $W_{\rm all}$ denote the class of all morphisms between affine schemes.
\end{defn}

\begin{parag}
Let $\SchafflA$ denote the subcategory of affine schemes generated by local Artinian rings $A$ of finite type, i.e.\ such that the structure morphism $\Spec(A) \ra S$ is almost of finite presentation. For morphisms in $\SchafflA$ we impose the following condition 
\end{parag}

\begin{defn}
    Given a morphism $f:U_1 \ra U_2$ of local Artinian affine schemes we say that $f$ is an \emph{inseparable} morphism if the induced morphism between closed points
    \[
        \bar{f}: u_1 \ra u_2
    \]
    is a purely inseparable field extension--let $W^{\rm lA}_{\rm insep.}$ denote the class of inseparable morphisms between local Artinian affine schemes of finite type.
    We let $W^{\rm lA}_{\rm all}$ denote the class of all local morphisms between local Artinian affine schemes.
\end{defn}

\begin{parag}
\label{subsubsec:P-classes}
Given a class $P \in \{W_{\rm nil.},W_{\rm all},W^{\rm lA}_{\rm triv.},W^{\rm lA}_{\rm all}\}$ a \emph{$P$-nil square} is a pushout square
\begin{equation}
    \label{eq:P-nil-square}
    \begin{tikzcd}
        T_1 \ar[r,"f"] \ar[d,hook,"\jmath_1"'] & T_2 \ar[d] \\
        T'_1 \ar[r] & T'_2
    \end{tikzcd}    
\end{equation}
where $f \in P$ and $\jmath_1$ is a nilpotent embedding. Notice that when we require that $f$ belongs to the subset $\{W^{\rm lA}_{\rm triv.},W^{\rm lA}_{\rm all}\}$ this means that $T_1$ and $T_2$ are local Artinian affine schemes and so is $T'_1$ since $\jmath_1$ is a nilpotent embedding.
\end{parag}

\begin{defn}
\label{defn:P-homogeneous}
Given a prestack $\sX$ and a class $P$ as in \S \ref{subsubsec:P-classes} we say that $\sX$ is \emph{$P$-homogeneous} if for every square as (\ref{eq:P-nil-square}) the following natural map
\[
    \sX(T'_2) \ra \sX(T'_1) \underset{\sX(T_1)}{\times}\sX(T_2)
\]
is an isomorphism. 

More generally, we say that a morphism of prestacks $g:\sX \ra \sY$ is \emph{$P$-homogeneous}\footnote{In the particular case of the morphisms $\{W^{\rm lA}_{\rm triv.}, W^{\rm lA}_{\rm all}\}$ we will require that $T_1 \ra \sY$ and $T_2 \ra \sY$ are locally almost of finite presentation. This condition is automatic when $\sY$ is locall almost of finite presentation.} if for every square as (\ref{eq:P-nil-square}) the following is a pullback square
\[
    \begin{tikzcd}
    \sX(T'_2) \ar[r] \ar[d] & \sX(T'_1) \underset{\sX(T_1)}{\times}\sX(T_2) \ar[d] \\
    \sY(T'_2) \ar[r] & \sY(T'_1) \underset{\sY(T_1)}{\times}\sY(T_2).
    \end{tikzcd}
\]
\end{defn}

\begin{rem}
Notice that the condition that $\sX$ is $W_{\rm nil.}$-homogeneous is exactly the same as $\sX$ being infinitesimally cohesive as defined in Definition \ref{defn:infinitesimally-cohesive}.
\end{rem}

\begin{rem}
Notice that in analogy with \cite[Tag 06L9]{stacks-project} we could have formulated the following: we say that a prestack $\sX$ satisfies the {\em (derived) Rim--Schlessinger condition} if $\sX$ takes any pushout diagram as follows
\[
    \begin{tikzcd}
        T_1 \ar[r,"f"] \ar[d,hook,"\jmath_1"'] & T_2 \ar[d] \\
        T'_1 \ar[r] & T'_2
    \end{tikzcd}    
\]
to a pullback diagram, where $T_1,T_2,T'_1$ and $T'_2$ are (locally almost of finite presentation\footnote{Again this condition is only relevant if one is writing a relative version of the Rim--Schlessinger condition, e.g.\ one considers $\sX \ra S$ a prestack over some (derived) scheme $S$.}) local Artinian schemes, $f$ is arbitrary and $\jmath_1$ is a closed embedding. Since any closed embedding of local Artinian schemes is nilpotent this condition is equivalent to $W^{\rm lA}_{\rm all}$-homogeneity.
\end{rem}

For the purposes of bootstrapping homogeneity we need to consider a refinement of the condition above:
\begin{defn}
\label{defn:H1P-and-H2P-conditions}
    A prestack $\sX$ is said to satisfy
    \begin{enumerate}
        \item[$(H^{\rm P}_1)$] if for every square as (\ref{eq:P-nil-square}) the natural map
        \[
            \sX(T'_2) \ra \sX(T'_1) \underset{\sX(T_1)}{\times}\sX(T_2)
        \]
        is fully faithful;
        \item[$(H^{\rm P}_2)$] if for every square as (\ref{eq:P-nil-square}) the natural map
        \[
            \sX(T'_2) \ra \sX(T'_1) \underset{\sX(T_1)}{\times}\sX(T_2)
        \]
        is essentially surjective.
    \end{enumerate}
    Clearly, $\sX$ is $P$-homogeneous if and only if it satisfies $H^{\rm P}_1$ and $H^{\rm P}_2$.
    
    More generally, given a morphism of prestacks $g:\sX \ra \sY$ then $g$ satisfies $(H^{\rm P}_1)$ (resp.\ $(H^{\rm P}_2)$) if the canonical map
    \[
      \sX(T'_2) \ra \sY(T'_2) \underset{\left(\sY(T'_1) \underset{\sY(T_1)}{\times}\sY(T_2)\right)}{\times} \left(\sX(T'_1) \underset{\sX(T_1)}{\times}\sX(T_2)\right)
    \]
    is fully faithful (resp.\ essentially surjective).
\end{defn}

We recall the following formal result, which will be useful when checking the conditions in Definition \ref{defn:H1P-and-H2P-conditions}.

\begin{lem}
\label{lem:categorical-conditions-for-spaces}
Let $F:S \ra T$ be a morphism in the category $\Spc$ then
    \begin{enumerate}
        \item $F$ is essentially surjective (as a morphism of $\infty$-categories) if and only if
        \[
            \pi_0(F): \pi_0(S) \ra \pi_0(T) \;\;\; \mbox{ is surjective.}
        \]
        \item $F$ is fully faithful (as a morphism of $\infty$-categories) if and only if
        \[
            \pi_i(F): \pi_i(S,s) \ra \pi_i(T,F(s))
        \]
        is bijective, for every $s \in \pi_0(S)$ and $i \geq 1$.
    \end{enumerate}
\end{lem}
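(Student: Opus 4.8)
The plan is to prove Lemma~\ref{lem:categorical-conditions-for-spaces} by directly unwinding what it means for a map of spaces, viewed as a morphism of $\infty$-groupoids, to be essentially surjective or fully faithful, and then translating those conditions into homotopy groups. The key point is that in the $\infty$-category $\Spc$, every object is an $\infty$-groupoid, so ``essentially surjective'' means surjective on $\pi_0$ of the underlying homotopy category, and ``fully faithful'' means an equivalence on mapping spaces; but mapping spaces in an $\infty$-groupoid are themselves computed by path spaces, whose homotopy groups are shifts of the $\pi_i$ of the original space.

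First I would handle essential surjectivity. By definition $F:S\to T$ is essentially surjective as a functor of $\infty$-categories if the induced functor on homotopy categories $\h F: \h S \to \h T$ is essentially surjective, i.e.\ surjective on isomorphism classes of objects. Since $S$ and $T$ are spaces, $\h S$ and $\h T$ are ordinary groupoids whose object-sets-up-to-isomorphism are exactly $\pi_0(S)$ and $\pi_0(T)$, and $\h F$ on these classes is $\pi_0(F)$. So essential surjectivity of $F$ is literally surjectivity of $\pi_0(F)$, giving~(i).

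Next, fully faithfulness: $F$ is fully faithful iff for all $s,s'\in S$ the map $\Map_S(s,s')\to \Map_T(F(s),F(s'))$ is an equivalence of spaces. Because $S$ is an $\infty$-groupoid, $\Map_S(s,s')$ is empty unless $s,s'$ lie in the same component, and when $s=s'$ it is the loop space: $\Map_S(s,s)\simeq \Omega_s S$, so $\pi_i(\Map_S(s,s))\cong \pi_{i+1}(S,s)$ for $i\geq 0$, and similarly for $T$. Conversely, if $s,s'$ are in the same component one chooses a path and transports, reducing to the case $s=s'$; and components where the mapping space is empty on both sides are handled by~(i) once we know $\pi_0(F)$ is already pinned down (though for the characterization as stated we just need the ``iff'' with $\pi_i$ for $i\geq 1$, which does not see $\pi_0$). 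Tracking the equivalence $\Map_S(s,s)\to\Map_T(F(s),F(s))$ through these identifications shows it is an equivalence for all $s$ iff $\pi_{i+1}(F):\pi_{i+1}(S,s)\to\pi_{i+1}(T,F(s))$ is a bijection for all $i\geq 0$ and all $s\in\pi_0(S)$, i.e.\ $\pi_i(F)$ is bijective for all $i\geq 1$; this is~(ii). (One also uses that a map of spaces which is a $\pi_i$-isomorphism on every component need not be an equivalence precisely when it fails to be $\pi_0$-surjective, which is why~(ii) alone is ``fully faithful'' and not ``equivalence''.)

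I do not expect a serious obstacle here: the statement is essentially a dictionary entry relating the $\infty$-categorical vocabulary to classical homotopy groups, and the only mild subtlety is bookkeeping the index shift coming from $\Map_S(s,s)\simeq\Omega_s S$ and making sure the basepoint $s$ ranges over all of $\pi_0(S)$ rather than a single component. Alternatively, one could cite this directly from the standard references on $\infty$-categories (e.g.\ Lurie's \emph{Higher Topos Theory}), since it is the combination of the characterization of fully faithful/essentially surjective functors with the description of mapping spaces in a Kan complex; I would likely include the short argument above for completeness rather than only a citation.
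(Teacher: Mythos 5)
The paper offers no proof of this lemma at all --- it is introduced with ``we recall the following formal result'' and left as a standard dictionary entry --- so there is no argument of the paper's to compare yours against. Your strategy (pass to the homotopy category for (i); identify $\Map_S(s,s')$ with a path space and use $\Map_S(s,s)\simeq \Omega_s S$, whence $\pi_i(\Map_S(s,s))\cong\pi_{i+1}(S,s)$, for (ii)) is exactly the intended justification, and part (i) together with the same-component analysis in part (ii) is correct.

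There is, however, a genuine gap in part (ii), located precisely in your parenthetical about ``components where the mapping space is empty on both sides.'' If $s,s'$ lie in different components of $S$, then $\Map_S(s,s')=\emptyset$, but nothing in the hypothesis of (ii) forces $F(s)$ and $F(s')$ to lie in different components of $T$; if they collide, $\Map_T(F(s),F(s'))$ is nonempty and $\emptyset\ra\Map_T(F(s),F(s'))$ is not an equivalence, so $F$ is not fully faithful. Concretely, the map from a two-point space to a point induces bijections on all $\pi_i$ for $i\geq 1$ at every basepoint, yet is not fully faithful. So the ``if'' direction of (ii) as literally stated is false; the correct dictionary is that $F$ is fully faithful if and only if $\pi_0(F)$ is \emph{injective} and $\pi_i(F)$ is bijective for all $i\geq 1$ and all $s\in\pi_0(S)$ (equivalently, $F$ is $(-1)$-truncated). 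Your appeal to (i) cannot close this hole, since (i) concerns surjectivity of $\pi_0(F)$, not injectivity. This is arguably a defect of the printed statement rather than of your method, but a complete proof must either add the $\pi_0$-injectivity hypothesis or flag that it is needed --- note that the paper's later use of the lemma (identifying ``fully faithful'' with ``$(-1)$-truncated'' in the proof of Lemma \ref{lem:compatible-relative-absolute}) implicitly relies on the corrected version.
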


We check the compatibilities between the relative and absolute notion.

\begin{lem}
\label{lem:compatible-relative-absolute}
Let $f:\sX \ra \sY$ be a morphism of prestacks. The following conditions are equivalent:
\begin{resultlist}
    \item $f$ satisfies $(H^{\rm P}_1)$ (resp.\ $(H^{\rm P}_2)$);
    \item for every affine scheme $U \ra \sY$ 
    \[
        \sX\underset{\sY}{\times}U
    \]
    satisfies $(H^{\rm P}_1)$ (resp.\ $(H^{\rm P}_2)$).
\end{resultlist}
\end{lem}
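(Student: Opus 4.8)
The statement is a standard ``relative vs.\ absolute'' compatibility, and the plan is to reduce both implications to two elementary facts. The first is that every affine scheme $U$, regarded as a prestack, is $P$\nobreakdash-homogeneous for \emph{every} class $P$: a $P$-nil square (\ref{eq:P-nil-square}) is in particular a pushout square in $\Schaff$, so the presheaf $\Map_{\Schaff}(-,U)$ carries it to a pullback square of spaces. The second is that fully faithful and essentially surjective maps in $\Spc$ are stable under base change — by Lemma \ref{lem:categorical-conditions-for-spaces} the former amounts to being a monomorphism and the latter to surjectivity on $\pi_0$, both of which are preserved by pullback of spaces. Beyond these, the only input is that finite limits of spaces commute with one another.

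For a prestack $\sZ$ and a $P$-nil square $\mathcal{S}$ as in (\ref{eq:P-nil-square}), write $\rho_{\sZ}(\mathcal{S})$ for the comparison map $\sZ(T'_2)\ra \sZ(T'_1)\underset{\sZ(T_1)}{\times}\sZ(T_2)$, and for $f:\sX\ra\sY$ write $\rho_f(\mathcal{S})$ for the relative comparison map of Definition \ref{defn:H1P-and-H2P-conditions}, whose target $Q_{\mathcal{S}}$ carries a projection $Q_{\mathcal{S}}\ra\sY(T'_2)$ whose composite with $\rho_f(\mathcal{S})$ is $\sX(T'_2)\ra\sY(T'_2)$. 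The key step is to show that for an affine scheme $U\ra\sY$ and $\sZ:=\sX\underset{\sY}{\times}U$, the map $\rho_{\sZ}(\mathcal{S})$ is the base change of $\rho_f(\mathcal{S})$ along $U(T'_2)\ra\sY(T'_2)$. Since fiber products of prestacks are computed objectwise one has $\sZ(T'_2)\simeq\sX(T'_2)\underset{\sY(T'_2)}{\times}U(T'_2)$, and commuting the two cospan-shaped limits gives
\[
    \sZ(T'_1)\underset{\sZ(T_1)}{\times}\sZ(T_2)\;\simeq\;\Big(\sX(T'_1)\underset{\sX(T_1)}{\times}\sX(T_2)\Big)\underset{\sY(T'_1)\underset{\sY(T_1)}{\times}\sY(T_2)}{\times}\Big(U(T'_1)\underset{U(T_1)}{\times}U(T_2)\Big).
\]
By the first fact $U(T'_1)\underset{U(T_1)}{\times}U(T_2)\simeq U(T'_2)$, and naturality of $\rho_{(-)}(\mathcal{S})$ applied to $U\ra\sY$ shows the induced map $U(T'_2)\ra\sY(T'_1)\underset{\sY(T_1)}{\times}\sY(T_2)$ factors through $\sY(T'_2)$; comparing with the definition of $Q_{\mathcal{S}}$ then identifies the target of $\rho_{\sZ}(\mathcal{S})$ with $Q_{\mathcal{S}}\underset{\sY(T'_2)}{\times}U(T'_2)$ and $\rho_{\sZ}(\mathcal{S})$ with the claimed base change.

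Granting this, $(a)\Rightarrow(b)$ is immediate: each $\rho_{\sX\times_{\sY}U}(\mathcal{S})$ is a base change of $\rho_f(\mathcal{S})$, hence inherits full faithfulness (resp.\ essential surjectivity) by the second fact. For $(b)\Rightarrow(a)$, fix a $P$-nil square $\mathcal{S}$; by Lemma \ref{lem:categorical-conditions-for-spaces} it suffices to show that every fiber of $\rho_f(\mathcal{S})$ is empty or contractible (resp.\ that $\pi_0(\rho_f(\mathcal{S}))$ is surjective). Given a point $q$ of $Q_{\mathcal{S}}$, let $\eta:T'_2\ra\sY$ be its image in $\sY(T'_2)$, take $U=T'_2$ with structure map $\eta$, and set $\sZ=\sX\underset{\sY}{\times}T'_2$. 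By the key step $\rho_{\sZ}(\mathcal{S})$ is the base change of $\rho_f(\mathcal{S})$ along $\Map_{\Schaff}(T'_2,T'_2)\ra\sY(T'_2)$; since $\{\eta\}\hra\sY(T'_2)$ factors through this map via the point $\id_{T'_2}$, base-changing once more shows that the base change of $\rho_f(\mathcal{S})$ along $\{\eta\}\hra\sY(T'_2)$ is itself a base change of $\rho_{\sZ}(\mathcal{S})$, hence fully faithful (resp.\ essentially surjective) by hypothesis $(b)$. As $q$ factors through the fiber $Q_{\mathcal{S}}\underset{\sY(T'_2)}{\times}\{\eta\}$, pasting of pullback squares identifies the fiber of $\rho_f(\mathcal{S})$ over $q$ with a fiber of this last base change; letting $q$ (hence $\eta$) range over $Q_{\mathcal{S}}$ controls all fibers of $\rho_f(\mathcal{S})$ and its essential image, so $\rho_f(\mathcal{S})$ is fully faithful (resp.\ essentially surjective), as needed.

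I expect the only real friction to be the bookkeeping in the limit identification of the key step — tracking the structure maps in the iterated pullbacks and invoking naturality of $\rho_{(-)}(\mathcal{S})$ correctly — together with the minor point that for $P\in\{W^{\rm lA}_{\rm triv.},W^{\rm lA}_{\rm all}\}$ one must check that the finiteness conventions built into the relative notion (see the footnote to Definition \ref{defn:P-homogeneous}) are preserved by the base changes above, so that the same $P$-nil squares are tested on both sides; everything else is formal.
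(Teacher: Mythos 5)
Your proposal is correct and follows essentially the same route as the paper: the same key identification of the comparison map for $\sX\underset{\sY}{\times}U$ as the base change of the relative comparison map along $U(T'_2)\ra\sY(T'_2)$ (via $P$-homogeneity of affine schemes and commuting the two cospan limits), the same stability of fully faithful/essentially surjective maps under pullback for (a)$\Rightarrow$(b), and the same specialization $U=T'_2$ for the converse. Your converse is phrased fiberwise over points of the target rather than via the paper's fiber-sequence argument, and you treat $(H^{\rm P}_2)$ explicitly where the paper leaves it as "completely analogous," but these are only presentational differences.
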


\begin{proof}
We prove the case of $(H^{\rm P}_1)$, the other is completely analogous. 

Consider a diagram as in (\ref{eq:P-nil-square}) and a morphism $U \ra \sY$ from an affine scheme $U$. Notice we have a pullback diagram
\begin{equation}
    \label{eq:pullback-of-canonical-morphism}
    \begin{tikzcd}
    \sX(T'_2)\underset{\sY(T'_2)}{\times}U(T'_2) \ar[r] \ar[d] & \left(\sX(T'_1)\underset{\sX(T_1)}{\times}\sX(T_2) \underset{\sY(T'_1)\underset{\sY(T_1)}{\times}\sY(T_2)}{\times}\sY(T'_2)\right)\underset{\sY(T'_2)}{\times}U(T'_2) \ar[d] \\
    \sX(T'_2) \ar[r] & \sX(T'_1)\underset{\sX(T_1)}{\times}\sX(T_2) \underset{\sY(T'_1)\underset{\sY(T_1)}{\times}\sY(T_2)}{\times}\sY(T'_2)
    \end{tikzcd}
\end{equation}
and if the lower horizontal morphism is fully faithful then so is the upper horizontal morphism.

Since pullbacks of prestacks are computed point-wise the upper horizontal map in (\ref{eq:pullback-of-canonical-morphism}) becomes
\begin{equation}
    \label{eq:iterated-fiber-product-canonical-map}
    (\sX\underset{\sY}{\times}U)(T'_2) \ra \left(\sX(T'_1)\underset{\sX(T_1)}{\times}\sX(T_2) \underset{\sY(T'_1)\underset{\sY(T_1)}{\times}\sY(T_2)}{\times}\sY(T'_2)\right)\underset{\sY(T'_2)}{\times}U(T'_2).
\end{equation}
Notice, that since any affine scheme is $P$-homogeneous, one has that the right-hand side of (\ref{eq:iterated-fiber-product-canonical-map}) is isomorphic to
\begin{equation}
\label{eq:pullback-at-affine-then-at-prestacks}
    \left(\sX(T'_1)\underset{\sX(T_1)}{\times}\sX(T_2)\right) \underset{\sY(T'_1)\underset{\sY(T_1)}{\times}\sY(T_2)}{\times} \left(U(T'_1)\underset{U(T_1)}{\times}U(T_2)\right).
\end{equation}
However, (\ref{eq:pullback-at-affine-then-at-prestacks}) is equivalent to the following iterated pullback
\begin{equation}
    \label{eq:pullback-at-prestacks-then-at-affine}
    \left(\sX(T'_1)\underset{\sY(T'_1)}{\times}U(T'_1)\right)\underset{\sX(T_1)\underset{\sY(T_1)}{\times}U(T_1)}{\times}\left(\sX(T_2)\underset{\sY(T_2)}{\times}U(T_2)\right).
\end{equation}
Indeed, in considering the big diagram
\[
    \begin{tikzcd}
    \sX(T'_1) \ar[r] \ar[d] & \sY(T'_1) \ar[d] & U(T'_1) \ar[d] \ar[l] \\
    \sX(T_1) \ar[r] & \sY(T_1) & U(T_1) \ar[l] \\
    \sX(T_2) \ar[r] \ar[u] & \sY(T_2) \ar[u] & U(T_2) \ar[l] \ar[u]
    \end{tikzcd}
\]
one notices that (\ref{eq:pullback-at-affine-then-at-prestacks}) is the pullback at each column then at each resulting row, whereas (\ref{eq:pullback-at-prestacks-then-at-affine}) is the pullback at each row then at each resulting column.

Thus, the map (\ref{eq:iterated-fiber-product-canonical-map}) becomes
\begin{equation}
    \label{eq:canonical-map-transformed}
    (\sX\underset{\sY}{\times}U)(T'_2) \ra (\sX\underset{\sY}{\times}U)(T'_1)\underset{(\sX\underset{\sY}{\times}U)(T_1)}{\times}(\sX\underset{\sY}{\times}U)(T_2).
\end{equation}

Now, if (a) is satisfied, since fully faithful morphisms are stable under pullback from (\ref{eq:canonical-map-transformed}) one sees that (b) is also satisfied. 

Conversely, assume that (b) holds and take $U = T'_2$, then the upper horizontal arrow of (\ref{eq:pullback-of-canonical-morphism}) becomes
\[
    \sX(T'_2)\underset{\sY(T'_2)}{\times}\pt \overset{\alpha}{\ra} \left(\sX(T'_1)\underset{\sX(T_1)}{\times}\sX(T_2) \underset{\sY(T'_1)\underset{\sY(T_1)}{\times}\sY(T_2)}{\times}\sY(T'_2)\right)\underset{\sY(T'_2)}{\times}\pt.
\]

Thus, one obtains a fiber sequence
\[
    \Fib(\pt \ra \sY(T'_2)) \ra \Fib(\alpha) \ra \Fib(\beta),
\]
where $\beta$ is the lower horizontal morphism in (\ref{eq:pullback-of-canonical-morphism}). Notice that by Lemma \ref{lem:categorical-conditions-for-spaces} $\alpha$ is fully faithful if and only if it is $(-1)$-truncated, since $\Fib(\pt \ra \sY(T'_2))$ and $\Fib(\alpha)$ are $(-1)$-truncated, so is $\Fib(\beta)$, which finishes the proof.
\end{proof}

\begin{rem}
Notice that Lemma \ref{lem:compatible-relative-absolute} tautologically implies that a prestack $\sX$ satisfies $H^{\rm P}_1$ (resp.\ $H^{\rm P}_2$) if and only if the structure morphism $\sX\ra S$ satisfies $H^{\rm P}_1$ (resp.\ $H^{\rm P}_2$). More generally, one can check that if $f:\sX \ra \sY$ satisfies $H^{\rm P}_1$ (resp.\ $H^{\rm P}_2$) and $\sY$ satisfies $H^{\rm P}_1$ (resp.\ $H^{\rm P}_2$), then so does $\sX$.
\end{rem}

\subsection{Cohesiveness}

The notion of cohesiveness is very similar to homogeneity, except that it looks slightly more symmetric, i.e.\ both morphisms in the pushout diagrams are assumed to be closed or nilpotent embeddings.

We will not use this notion in this paper, we included it here just for completeness.

\begin{defn}
\label{defn:cohesive}
A prestack $\sX$ is said to be \emph{cohesive} if it takes any pushout square
\begin{equation}
    \label{eq:cohesive-defn-square}
    \begin{tikzcd}
    T_0 \ar[r,"\imath_{01}"] \ar[d,"\imath_{02}"'] & T_1 \ar[d] \\
    T_2 \ar[r] & T_{3}
    \end{tikzcd}
\end{equation}
where $\imath_{01}$ and $\imath_{02}$ are closed embeddings to a pullback square of spaces.

More generally, a morphism $g:\sX \ra \sY$ of prestacks is \emph{cohesive} if given any pushout square as in (\ref{eq:cohesive-defn-square}) the canonical map
\[
    \sX(T_3) \ra \sY(T_3) \underset{\left(\sY(T_2) \underset{\sY(T_0)}{\times}\sY(T_1)\right)}{\times} \left(\sX(T_2) \underset{\sX(T_0)}{\times}\sX(T_1)\right)
\]
is an isomorphism.
\end{defn}

\begin{rem}
The notion of cohesiveness will be satisfied by any geometric stack (see \cite[Theorem 5.6.4]{DAG}), it is however a bit too strong for bootstrapping deformation theory for an, a priori, arbitrary prestack.
\end{rem}

It turns out that in checking the axioms for a geometric stack the following weaker version of Definition \ref{defn:cohesive} is more appropriate.

\begin{defn}
\label{defn:infinitesimally-cohesive}
A prestack $\sX$ is said to be \emph{infinitesimally cohesive} if it takes any pushout square
\begin{equation}
    \label{eq:infinitesimally-cohesive-defn-square}
    \begin{tikzcd}
    T_0 \ar[r,"\jmath_{01}"] \ar[d,"\jmath_{02}"'] & T_1 \ar[d] \\
    T_2 \ar[r] & T_{3}
    \end{tikzcd}
\end{equation}
where $\jmath_{01}$ and $\jmath_{02}$ are nilpotent embeddings to a pullback square of spaces.

More generally, a morphism $g:\sX \ra \sY$ of prestacks is \emph{infinitesimally cohesive} if given any pushout square as in (\ref{eq:infinitesimally-cohesive-defn-square}) the canonical map
\[
    \sX(T_3) \ra \sY(T_3) \underset{\left(\sY(T_2) \underset{\sY(T_0)}{\times}\sY(T_1)\right)}{\times} \left(\sX(T_2) \underset{\sX(T_0)}{\times}\sX(T_1)\right)
\]
is an isomorphism.
\end{defn}

\begin{rem}
\label{rem:infinitesimally-cohesive-GR}
In \cite[Chapter 1, \S 6.1]{GRII} infinitesimally cohesive is defined by considering only squares as (\ref{eq:infinitesimally-cohesive-defn-square}) where $T_1 = T_2$ and $\jmath_{01} (= \jmath_{02})$ is a square-zero extension.
Nevertheless, when $\sX$ admits a pro-cotangent complex and is convergent this is equivalent to the definition given above by \cite[Proposition 17.3.6.1]{SAG}.
\end{rem}

This notion is closely related to $W^{\rm lA}_{\rm all}$ as we will see in Proposition \ref{prop:equivalent-formally-cohesive}.

\begin{defn}
\label{defn:formally-cohesive}
Given a prestack $\sX$ we will say that $\sX$ is \emph{formally cohesive} if it takes every pushout square
\begin{equation}
    \label{eq:pushout-square-formally-cohesive}
    \begin{tikzcd}
    U_0 \ar[r,"\imath_{01}"] \ar[d,"\imath_{02}"'] & U_1 \ar[d] \\
    U_2 \ar[r] & U_{3}
    \end{tikzcd}
\end{equation}
where $U_0,U_1,U_2$ and $U_3$ are local Artinian schemes of finite type, and $\imath_{01}$ and $\imath_{02}$ are closed embeddings to a pullback diagram of spaces.
\end{defn}

\subsection{Cotangent complex}

In this section we review the formalism of the cotangent complex for a prestack following \cite[Chapter 1]{GRII}. We refer the reader to \emph{loc. cit.} for details.

\begin{defn}
\label{defn:pro-cotangent-space-at-a-point-x}
Given a prestack $\sX$ and a point $(S \overset{x}{\ra} \sX) \in \Schaff_{/\sX}$ one says that $\sX$ admits a \emph{pro-cotangent space} at $x$ if the functor
\begin{align*}
    \Lift_{x}(\sX): \QCoh(S)^{\leq 0} & \ra \Spc \\
    \sF & \mapsto \Maps_{S/}(S_{\sF},\sX)
\end{align*}
preserves finite limits\footnote{Notice that it is enough to check that $\Lift_{x}(\sX)$ sends limits of the form $0 \underset{\sF_2}{\times}\sF_1$ in $\QCoh(S)^{\leq 0}$ to limits in $\Spc$.}. Here $S_{\sF}$ denotes the split square-zero extension associated to $\sF$ and
\[
    \Maps_{S/}(S_{\sF},\sX) := \ast \underset{\sX(S)}{\times}\sX(S_{\sF}),
\]
where $\ast \overset{x}{\ra} \sX(S)$.

We let $T^*_{x}\sX$ denote the object of $\Pro(\QCoh(S)^-)$ pro-corepresenting the functor $\Lift_{x}(\sX)$.
\end{defn}

\begin{parag}
\label{subsubsec:pro-cotangent-spaces}
We say that a prestack $\sX$ admits pro-cotangent spaces if it satisfies Definition \ref{defn:pro-cotangent-space-at-a-point-x} for every point $(S \overset{x}{\ra} \sX) \in \Schaff_{/\sX}$.
\end{parag}

\begin{parag}
The difference between pro-cotangent spaces and pro-cotangent complex is that the later requires that the objects $T^*_{x}\sX$ are compatible with base change.
\end{parag}

\begin{defn}
\label{defn:pro-cotangent-complex-at-a-point}
Given a prestack $\sX$ and a point $(S \overset{x}{\ra} \sX) \in \Schaff_{/\sX}$ one says that $\sX$ admits a \emph{pro-cotangent complex at $x$} if
\begin{condlist}
    \item $\sX$ admits a pro-cotangent space at $x$;
    \item for every map $f:S' \ra S$ in $\Schaff$ and every $\sF' \in \QCoh(S')^{\leq 0}$, the map
    \[
        \colim_{\sF \in {\rm QCoh}(S)^{\leq 0} \;|\; f^*(\sF) \ra \sF'} \Maps_{S/}(S_{\sF},\sX) \ra \Maps_{S'/}(S'_{\sF'},\sX)
    \]
    is an isomorphism of spaces.
\end{condlist}
\end{defn}

\begin{parag}
\label{subsubsec:pro-cotangent-complex}
We say that $\sX$ admits pro-cotangent complex if it admits a pro-cotangent complex at every point $(S \overset{x}{\ra} \sX) \in \Schaff_{/\sX}$. In this case one has an object $T^*\sX \in \Pro(\QCoh(\sX)^{-})$ called the \emph{pro-cotangent complex of $\sX$}.
\end{parag}

\begin{parag}
For a prestack $\sX$ that admits a pro-cotangent space at point $(S \overset{x}{\ra} \sX) \in \Schaff_{/\sX}$, we say that $\sX$ admits a \emph{cotangent space at $x$} if the functor $\Lift_{x}(\sX):\QCoh(S)^{\leq 0} \ra \Spc$ preserves cofiltered limits. This formally implies that
\[
    T^*_{x}\sX \in \QCoh(S)^-.
\]
\end{parag}

\begin{defn}
\label{defn:cotangent-complex-at-x}
Given a prestack $\sX$ and a point $(S \overset{x}{\ra} \sX) \in \Schaff_{/\sX}$ one says that $\sX$ admits a \emph{cotangent complex at $x$} if
\begin{condlist}
    \item $\sX$ admits a cotangent space at $x$;
    \item for every map $f:S' \ra S$ of affine schemes and every $\sF' \in \QCoh(S')^{\leq 0}$, the map
    \[
        \colim_{\sF \in {\rm QCoh}(S)^{\leq 0} \;|\; f^*(\sF) \ra \sF'} \Maps_{S/}(S_{\sF},\sX) \ra \Maps_{S'/}(S'_{\sF'},\sX)
    \]
    is an isomorphism of spaces.
\end{condlist}
\end{defn}

\begin{parag}
\label{subsubsec:cotangent-complex}
We will say that a prestack $\sX$ admits a \emph{cotangent complex} if it admits a cotangent complex at every point $(S \overset{x}{\ra} \sX) \in \Schaff_{/\sX}$. In this case one obtains an object $T^*\sX \in \QCoh(\sX)^-$ called the \emph{cotangent complex of $\sX$}.
\end{parag}

\begin{parag}
\label{subsubsec:convergent-and-laft-cotangent-complex}
The following result will be useful for us to determine if $\sX$ has a cotangent complex by checking conditions of Definition \ref{defn:cotangent-complex-at-x} in a smaller class of affine schemes and quasi-coherent sheaves on these schemes.
\end{parag}

\begin{lem}
\label{lem:cotangent-complex-from-ft-points}
Assume that $\sX$ is a convergent and locally almost of finite presentation prestack. Then $\sX$ admits a cotangent complex if and only if the following conditions hold:
\begin{condlist}
    \item for every $(S\overset{x}{\ra}\sX) \in (\Schaffconvft)_{/\sX}$ the functor
    \[
        \Lift_{x}(\sX):\Coh(S)^{\leq 0} \ra \Spc
    \]
    preserves all small limits;
    \item for every morphism $f:S' \ra S$ in $\Schaffconvft$ and every $\sF' \in \Coh(S')^{\leq 0}$, the map
    \[
        \colim_{\sF \in {\rm Coh}(S)^{\leq 0} \;|\; f^*(\sF) \ra \sF'} \Maps_{S/}(S_{\sF},\sX) \ra \Maps_{S'/}(S'_{\sF'},\sX)
    \]
    is an isomorphism of spaces.
\end{condlist}
We will say that $\sX$ admits a cotangent complex at a point $(S\overset{x}{\ra}\sX) \in (\Schaffconvft)_{/\sX}$ if conditions 1) and 2) hold.
\end{lem}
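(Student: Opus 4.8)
The plan is to prove the two implications separately; the forward one is essentially a restriction argument, while the backward one is where all the work lies. Recall that $\sX$ admits a cotangent complex exactly when, for every point $(S\overset{x}{\ra}\sX)\in\Schaff_{/\sX}$, the functor $\Lift_{x}(\sX):\QCoh(S)^{\leq 0}\ra\Spc$ preserves finite limits and cofiltered limits — equivalently, is corepresented by an object $T^{*}_{x}\sX\in\QCoh(S)^{-}$ — and in addition the base-change condition of Definition \ref{defn:cotangent-complex-at-x}(2) holds at $x$.

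For the forward implication, suppose $\sX$ admits a cotangent complex. Then at any $(S\overset{x}{\ra}\sX)\in(\Schaffconvft)_{/\sX}$ the functor $\Lift_{x}(\sX)$ is corepresented by $T^{*}_{x}\sX\in\QCoh(S)^{-}$, hence preserves all small limits; since $S$ is Noetherian, the inclusion $\Coh(S)^{\leq 0}\hookrightarrow\QCoh(S)^{\leq 0}$ creates whatever small limits exist in $\Coh(S)^{\leq 0}$, so condition 1) follows by restriction. Condition 2) is obtained from Definition \ref{defn:cotangent-complex-at-x}(2) by restricting the indexing to morphisms in $\Schaffconvft$ and to coherent $\sF$, $\sF'$; that the colimit is not affected by shrinking the index category is a cofinality statement, proved by approximating an arbitrary $\sF\in\QCoh(S)^{\leq 0}$ equipped with a map $f^{*}\sF\ra\sF'$ by its coherent subobjects, using that $S,S'$ are Noetherian and $\sF'$ is coherent.

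For the backward implication, I would start from an arbitrary $(S\overset{x}{\ra}\sX)\in\Schaff_{/\sX}$ and reduce in two steps to the hypotheses 1) and 2). First, using that $\sX$ is convergent together with the compatibility of the cotangent-complex formalism with the truncation tower $S\simeq\lim_{n}\tau^{\leq n}S$ (a general fact of the type of \cite[\S 17.3]{SAG}), the functor $\Lift_{x}(\sX)$ and the base-change data at $S$ are assembled from those at the $\tau^{\leq n}S$, so one may assume $S$ eventually coconnective, i.e.\ $n$-coconnective for some $n$. Second, such an $S$ is a cofiltered limit $\lim_{\alpha}S_{\alpha}$ of $n$-coconnective affine schemes of finite type, and since $\sX$ is locally almost of finite presentation $x$ factors through some $x_{\alpha}:S_{\alpha}\ra\sX$; the base-change condition 2) (at morphisms in $\Schaffconvft$) then identifies $\Lift_{x}(\sX)$, and the base-change data along maps out of $S$, with the pullback of the corresponding objects for $x_{\alpha}$. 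This reduces the verification of Definition \ref{defn:cotangent-complex-at-x} to points of $(\Schaffconvft)_{/\sX}$.

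Finally, over $S\in\Schaffconvft$ one must pass from $\QCoh(S)^{\leq 0}$ to $\Coh(S)^{\leq 0}$. Every $\sF\in\QCoh(S)^{\leq 0}$ is a filtered colimit $\sF\simeq\colim_{i}\sF_{i}$ of coherent connective sheaves, so $S_{\sF}\simeq\lim_{i}S_{\sF_{i}}$ is a cofiltered limit of $n$-coconnective affine schemes of finite type, and the locally-almost-of-finite-presentation hypothesis gives $\sX(S_{\sF})\simeq\colim_{i}\sX(S_{\sF_{i}})$; thus $\Lift_{x}(\sX)$ on $\QCoh(S)^{\leq 0}$ is the left Kan extension along filtered colimits of its restriction to $\Coh(S)^{\leq 0}$. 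Combining this with condition 1) and the commutation of finite limits with filtered colimits in $\Spc$, one concludes that $\Lift_{x}(\sX):\QCoh(S)^{\leq 0}\ra\Spc$ preserves finite limits and cofiltered limits, i.e.\ $\sX$ admits a cotangent space at $x$; and Definition \ref{defn:cotangent-complex-at-x}(2) reduces, by the same coherent approximation, to condition 2). The main obstacle is precisely this last step: deducing preservation of \emph{cofiltered} limits on $\QCoh(S)^{\leq 0}$ from limit-preservation on the subcategory $\Coh(S)^{\leq 0}$, since cofiltered limits of coherent sheaves are not coherent and so this is not formal from the left-Kan-extension description; here one genuinely uses that $\Lift_{x}(\sX)$ preserves filtered colimits (the laft hypothesis) to exclude the pathological case in which the corepresenting pro-object fails to be constant. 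A secondary point needing care is the bookkeeping of connectivity through the two reduction steps, so that the relevant split square-zero extensions stay in the categories where conditions 1) and 2) apply.
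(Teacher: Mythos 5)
Your proof follows the same two-step reduction as the paper's: first use convergence to replace an arbitrary point by its eventually coconnective truncations, then use local almost finite presentation together with Noetherian approximation of $S$ to reduce to finite-type points and coherent sheaves. The one step you single out as delicate --- upgrading limit-preservation on $\Coh(S)^{\leq 0}$ to cofiltered-limit-preservation on $\QCoh(S)^{\leq 0}$ --- is precisely the step the paper itself treats only formally (and condition 2 is likewise left to the reader there), so your proposal matches the published argument in both strategy and level of detail.
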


\begin{proof}
Notice that $\sX$ admits a cotangent space at a point $(S\overset{x}{\ra}\sX) \in \Schaff_{/\sX}$ if and only if the functor
\[
    \Lift_{x}(\sX):\QCoh(S)^{\leq 0 } \ra \Spc
\]
commutes with small limits. For any $\sF \in \QCoh(S)^{\leq 0}$ consider the diagram
\[
    \begin{tikzcd}
    \sX(S) \ar[r] & \lim_{n \geq 0}\sX(\tau^{\leq n}(S)) \\
    \sX(S_{\sF}) \ar[r] \ar[u] & \lim_{n \geq 0}\sX(\tau^{\leq n}(S)_{\tau^{\geq -n}\sF}) \ar[u] \\
    \Maps_{S/}(S_{\sF},\sX) \ar[r] & \lim_{n \geq 0}\Maps_{\tau^{\leq n}(S)/}(S_{\tau^{\geq -n}\sF},\sX)
    \end{tikzcd}
\]
where each column is the homotopy fiber over the point $x\in \sX(S)$. Since $\sX$ is convergent the two top rows are equivalences, hence so is the third and we have 
\[
    \Lift_{x}(\sX)(\sF) = \Maps_{S/}(S_{\sF},\sX) \simeq \lim_{n \geq 0}\Maps_{\tau^{\leq n}(S)/}(S_{\tau^{\geq -n}\sF},\sX).
\]

From this we obtain that $\Lift_{x}(\sX)$ preserves small limits in $\QCoh(S)^{\leq 0}$ if and only if each $\Lift_{x_n}(\sX)(-)=\Maps_{\tau^{\leq n}(S)/}(S_{(-)},\sX)$ preserve small limits in $\QCoh(S)^{\geq -n,\leq 0}$.

Thus, we can assume that $(S\overset{x}{\ra}\sX) \in \Schaffconv_{/\sX}$ is an eventually coconnective point, i.e.\ $S \in \Schaffn$ for some $n \geq 0$. Let $S \simeq \lim_I S_i$ be a presentation with $S_i \in \Schaffnfp$, given any object $\sF \in \QCoh(S)$ by \cite[Proposition 4.5.1.2.]{SAG} one has a compatible family $\{\sF_i \in \QCoh(S_i)\}_I$ recovering $\sF$ in the category $\QCoh(S) \simeq \lim_I\QCoh(S_i)$. More generally, given a diagram $\{\sF_j\}_J$ in $\QCoh(S)$ we let $\{\sF_{j,i}\in \QCoh(S_i)\}_{J\times I}$ be a system such that taking the limit at every $i \in I$ gives a diagram  $\{\lim_J\sF_{j,i} \in \QCoh(S_i)\}_I$ representing $\sF \in \QCoh(S)$. 

Consider the diagram
\[
    \begin{tikzcd}
    \sX(S) & \colim_{I^{\rm op}} \sX(S_i) \ar[l] \\
    \sX(S_{\sF}) \ar[u] & \colim_{(J\times I)^{\rm op}}\sX((S_{i})_{\sF_j}) \ar[l] \ar[u] \\
    \Maps_{S/}(S_{\sF},\sX) \ar[u] & \colim_{(J\times I)^{\rm op}}\Maps_{S_i/}((S_{i})_{\sF_j},\sX) \ar[u] \ar[l]
    \end{tikzcd}
\]
where each column is again a homotopy fiber over $x\in \sX(S)$. Notice that the diagram $J\times I$ is co-filtered and since $\sX$ is locally almost of finite presentation the two top horizontal morphism are equivalences, thus one has
\[
    \Lift_{x}(\sX)(\sF) = \Maps_{S/}(S_{\sF},\sX) \simeq \colim_{(J\times I)^{\rm op}}\Maps_{S_i/}((S_{i})_{\sF_j},\sX).
\]
From this equivalence it follows formally that $\Lift_{x}(\sX)(-)$ preserves small limits if and only if each $\Maps_{S_i/}((S_{i})_{-},\sX)$ preserves small limits.

The argument condition 2) is similar and we leave it to the reader.
\end{proof}

\begin{defn}
\label{defn:def-theory}
One says that a prestack $\sX$ has \emph{deformation theory} if
\begin{condlist}
    \item $\sX$ is convergent;
    \item $\sX$ admits a pro-cotangent complex;
    \item $\sX$ is infinitesimally cohesive.
\end{condlist}
One says that $\sX$ has \emph{corepresentable deformation theory} if $\sX$ admits a cotangent complex.
\end{defn}

For our convenience we collect some of the relations between properties of a prestack and that of its cotangent complex.

\begin{prop}
\label{prop:properties-cotangent-complex}
\begin{resultlist}
    \item Assume that $f:\sX \ra \sY$ is locally almost of finite presentation and admits a cotangent complex, then $T^*(\sX/\sY) \in \QCoh(\sX)$ is almost perfect;
    \item Assume that $f:\sX \ra \sY$ admits deformation theory and that $T^{*}(\sX/\sY) \in \QCoh(\sX)$ is almost perfect, then $f$ is locally almost of finite presentation.
\end{resultlist}
\end{prop}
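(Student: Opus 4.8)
The plan is to treat both parts as the derived form of the equivalence ``locally almost of finite presentation $\Leftrightarrow$ almost perfect cotangent complex'', in the spirit of \cite[Chapter 1, \S 8--9]{GRII} and \cite[\S 17.4]{SAG}. Since every notion in the statement is local on $\sX$ and stable under base change along $\sY$, I would first replace $f:\sX\ra\sY$ by its base change $\sX\underset{\sY}{\times}T\ra T$ along an affine point $(x:T\ra\sX)$ with composite $T\ra\sY$; this preserves all the hypotheses and the conclusion, so I may assume $\sY=S$ is an affine scheme and that we are given $(x:S\ra\sX)$. Then $L:=x^*T^*\sX$ is an object of $\QCoh(S)^-$ (bounded above under either hypothesis --- from the existence of the cotangent complex in (a), from almost perfectness in (b)), which by Definition \ref{defn:pro-cotangent-space-at-a-point-x} corepresents
\[
    \Lift_x(\sX):\QCoh(S)^{\leq 0}\ra\Spc,\qquad \sF\mapsto\Maps_{S/}(S_{\sF},\sX),
\]
where $S_{\sF}$ is the split square-zero extension; the whole argument is an analysis of when $\Lift_x(\sX)$ commutes with filtered colimits.

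For part (a) I would use the ``almost compact'' reformulation of almost perfectness: since $L\in\QCoh(S)^-$ is already bounded above, it is almost perfect if and only if for every $n\geq 0$ the functor $\Map_{\QCoh(S)}(L,-)$ commutes with filtered colimits on $\QCoh(S)^{\geq -n,\leq 0}$. To verify this, note that $\sF\mapsto S_{\sF}$ carries a filtered colimit of sheaves in $\QCoh(S)^{\geq -n,\leq 0}$ to a filtered colimit of $n$-truncated affine schemes (the underlying colimit of connective $\bE_\infty$-rings is computed on underlying spectra, and a filtered colimit of $n$-truncated rings remains $n$-truncated). As $\sX$ is locally almost of finite presentation, ${^{\leq n}\sX}$ sends this colimit to a colimit of spaces, and filtered colimits commute with the finite limit defining $\Maps_{S/}$; whence $\Lift_x(\sX)\simeq\Map_{\QCoh(S)}(L,-)$ commutes with it. Thus $L$ is almost perfect for each such $x$, so $T^*(\sX/\sY)$ is almost perfect.

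For part (b), observe first that ``$T^*\sX$ almost perfect'' already promotes the pro-cotangent complex of $\sX$ to an honest cotangent complex, so $\sX$ has corepresentable deformation theory with almost perfect cotangent complex. I must show $\sX$ is locally almost of finite presentation, i.e.\ that for every $n$ and every filtered diagram $\{A_i\}$ of $n$-truncated rings with colimit $A$ the map $\sX(A)\ra\colim_i\sX(A_i)$ is an equivalence. By convergence of $\sX$ I may argue level by level, and I would run a Postnikov dévissage: $A$ sits, compatibly in the diagram, in a finite tower $H^0(A)=A^{(0)}\ra\cdots\ra A^{(n)}=A$ in which $A^{(k)}\ra A^{(k-1)}$ is a square-zero extension by a shift of the discrete module $H^{-k}(A)$, and filtered colimits are exact so $H^{-k}(A)=\colim_i H^{-k}(A_i)$. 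Since $\sX$ is infinitesimally cohesive and admits a cotangent complex, $\sX(A)$ is built from $\sX(A^{(0)})$ by finitely many pullbacks whose newly appearing factors at stage $k$ are mapping spaces out of $x^*T^*\sX$ into a shift of $H^{-k}(A)$; these commute with the filtered colimit because $x^*T^*\sX$ is almost perfect (via the reformulation used in (a)). Combining this with the commutation of filtered colimits with the finite limits of the dévissage, and with the limit-preservation of $\classical{\sX}$ at the bottom of the tower, one gets $\sX(A)\simeq\colim_i\sX(A_i)$.

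The step I expect to be the genuine obstacle is the base case just invoked, namely that $\classical{\sX}$ is locally of finite presentation: coherence of the classical cotangent complex controls only the infinitesimal directions, and propagating limit-preservation from the square-zero layers down to $H^0$ is precisely where \cite[Chapter 1, Theorem 9.1.2]{GRII} --- and, classically, the Artin-type approximation bookkeeping it abstracts --- does the real work. In practice I would import that theorem (which is, in effect, the content of part (b)) for this step, the derived dévissage above serving to reduce everything else to it, together with the elementary fact that almost perfect quasi-coherent sheaves corepresent filtered-colimit-preserving functors on bounded-below quasi-coherent sheaves.
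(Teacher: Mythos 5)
The paper's own proof of this proposition is a one-line citation of \cite[Corollary 17.4.2.2]{SAG}, so the relevant comparison is with the standard argument behind that citation. Your part (a) is essentially that argument and is fine in outline; the one point to tighten is that $S_{\sF}$ is an $n$-truncated affine scheme only when $S$ itself is, so for a point $x:S\ra\sX$ with $S$ not eventually coconnective you must first reduce to the truncations $\tau^{\leq m}(S)$ (using base change for the cotangent complex and the fact that almost perfectness is detected on truncations), in the spirit of Lemma \ref{lem:cotangent-complex-from-ft-points}.

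Part (b) has a genuine gap, and it sits exactly where you flagged it: the base case that $\classical{f}$ is locally of finite presentation. Your d\'evissage correctly reduces everything above cohomological degree $0$ to almost perfectness of the cotangent complex, but \cite[Chapter 1, Theorem 9.1.2]{GRII} cannot supply the base case: that theorem takes local finite typeness of the classical truncation as a \emph{hypothesis} and concludes that $\sX$ is locally almost of finite type --- it is the abstract form of your d\'evissage, not of its base case, so invoking it here is circular. Moreover, no argument can close this gap from the stated hypotheses: $\Spec\,\bar{\bQ} \ra \Spec\,\bQ$ is a morphism of affine schemes (hence admits deformation theory) with vanishing, hence almost perfect, relative cotangent complex, yet it is not locally almost of finite presentation, since $\Map_{\bQ}(\bar{\bQ},\colim_i E_i)$ is nonempty while $\colim_i\Map_{\bQ}(\bar{\bQ},E_i)$ is empty for the filtered system of finite subextensions $E_i \subset \bar{\bQ}$. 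The statement needs the additional hypothesis that $\classical{f}$ is locally of finite presentation (equivalently, of finite presentation to order $0$); that hypothesis is present in the SAG result the paper invokes and is dropped in the paper's formulation of (b), so the correct conclusion of your analysis is that the base case must be \emph{assumed}, not derived.
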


\begin{proof}
(a) and (b) are Corollary 17.4.2.2 (1) and (2) of \cite{SAG}, respectively.
\end{proof}

\begin{lem}
\label{lem:coherent-cohomology-implies-coherent}
Let $\sX$ be a prestack. Assume that 
\begin{condlist}
    \item $\sX$ admits deformation theory;
    \item there exists $n \geq 0$ such that $\sX$ admits $(-n)$-connective tangent spaces\footnote{See \cite[Chapter 1, \S 3.1]{GRII} for a definition. Alternatively, this is the same as requiring that in condition $(\beta)$ below $H^k(T^*_{x_0}(\sX))$ for every $k > n$.}, and
    \item[($\beta$)] for every $x:T\ra \classical{\sX}$ where $T$ is a classical affine scheme of finite type one has
    \[
        H^k(T^*_{x}(\sX)) \in \Coh(T)^{\heartsuit}
    \]
    for all $k \in \bZ$.
\end{condlist}
Then for every $x:T\ra \classical{\sX}$ where $T$ is a classical affine scheme of finite type one has
\[
    T^*_{x}(\sX) \in \Coh(T).
\]
\end{lem}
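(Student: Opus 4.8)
The plan is to prove the stronger statement that, for every classical affine scheme $T$ of finite type and every $x\colon T\ra\classical{\sX}$, the pro-cotangent complex $T^*_x\sX$ is equivalent, as an object of $\Pro(\QCoh(T)^-)$, to a genuine object of $\QCoh(T)$ with coherent cohomology concentrated in degrees $\leq n$ -- that is, an object of $\Coh(T)^-$. Note $T^*_x\sX$ exists in $\Pro(\QCoh(T)^-)$ since $\sX$, admitting deformation theory, admits a pro-cotangent complex; by condition 2) one has $H^k(T^*_x\sX)=0$ for $k>n$, so $T^*_x\sX\in\Pro(\QCoh(T)^{\leq n})$; and $T^*_x\sX$ pro-corepresents $\Lift_x(\sX)\colon\QCoh(T)^{\leq 0}\ra\Spc$, i.e. $\Lift_x(\sX)(\sF)\simeq\Map_{\Pro(\QCoh(T)^-)}(T^*_x\sX,\sF)$ for $\sF\in\QCoh(T)^{\leq 0}$.

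\emph{Step 1: the Postnikov layers are genuine.} The cohomology pro-objects $M_k:=H^k(T^*_x\sX)\in\Pro(\QCoh(T)^{\heartsuit})$ (which vanish for $k>n$) lie in $\Coh(T)^{\heartsuit}$ by condition $(\beta)$. The embedding $\Coh(T)^-\hra\Pro(\QCoh(T)^-)$ is fully faithful and exact, and its essential image -- the ``genuine coherent'' objects -- is closed under shifts, fibres and extensions (since $\Coh(T)^-$ is a thick subcategory of $\QCoh(T)^-$ and $T$ is Noetherian). By descending induction on $m\leq n$, with base case $\tau^{\geq n}T^*_x\sX\simeq M_n[-n]$ and inductive step the fibre sequence
\[
M_m[-m]\ra\tau^{\geq m}T^*_x\sX\ra\tau^{\geq m+1}T^*_x\sX,
\]
each truncation $\sigma_m:=\tau^{\geq m}T^*_x\sX$ is a genuine object of $\Coh(T)^{[m,n]}$.

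\emph{Step 2: assembling the truncations.} Since $T$ is Noetherian, $\QCoh(T)$ is left complete, so $Y:=\lim_m\sigma_m$ (limit over $m\to-\infty$, computed in $\QCoh(T)$) exists, and the Milnor sequence gives $H^k(Y)\simeq M_k$, whence $Y\in\Coh(T)^-$. It remains to exhibit a natural equivalence $\Lift_x(\sX)(\sF)\simeq\Map_{\QCoh(T)}(Y,\sF)$ for all $\sF\in\QCoh(T)^{\leq 0}$, which identifies $T^*_x\sX$ with $Y$. For bounded $\sF\in\QCoh(T)^{[a,0]}$ it follows from the fibre sequence $\tau^{\leq a-1}T^*_x\sX\ra T^*_x\sX\ra\sigma_a$ together with the t-structure vanishing $\Map_{\Pro(\QCoh(T)^-)}(\tau^{\leq a-1}T^*_x\sX,\sF)\simeq\pt$ (source in $\Pro(\QCoh(T)^{\leq a-1})$, target in $\QCoh(T)^{\geq a}$): this gives $\Lift_x(\sX)(\sF)\simeq\Map_{\QCoh(T)}(\sigma_a,\sF)$, and since $\tau^{\geq a}Y\simeq\sigma_a$ -- a map of bounded genuine objects that is an isomorphism on every cohomology sheaf -- while $\Map_{\QCoh(T)}(\tau^{\leq a-1}Y,\sF)\simeq\pt$, we get $\Map_{\QCoh(T)}(\sigma_a,\sF)\simeq\Map_{\QCoh(T)}(Y,\sF)$. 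For a general $\sF\in\QCoh(T)^{\leq 0}$ I would use that $\sX$ is convergent: since $\tau^{\leq k}(S_{\sF})\simeq S_{\tau^{\geq -k}\sF}$, convergence gives $\Lift_x(\sX)(\sF)\simeq\lim_k\Lift_x(\sX)(\tau^{\geq -k}\sF)$, and combining the bounded case with left completeness of $\QCoh(T)$ yields $\Lift_x(\sX)(\sF)\simeq\lim_k\Map_{\QCoh(T)}(Y,\tau^{\geq -k}\sF)\simeq\Map_{\QCoh(T)}(Y,\sF)$.

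The main obstacle is Step 2. The subtlety is that the induced t-structure on $\Pro(\QCoh(T)^-)$ is \emph{not} left complete, so one cannot simply take the limit of the Postnikov tower of $T^*_x\sX$ inside the pro-category and expect it to be genuine; one is forced to argue through the corepresented functors, and the reduction from bounded test objects $\sF$ to all of $\QCoh(T)^{\leq 0}$ is exactly where convergence of $\sX$ is essential. Condition 2) is also crucial: without the uniform bound $H^{>n}(T^*_x\sX)=0$ the descending induction of Step 1 has no base case, and the limit $Y$ need not have coherent cohomology.
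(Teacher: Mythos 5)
Your proof is correct and runs on the same engine as the paper's: use convergence of $\sX$ to reduce to bounded coconnective test objects $\sF$, observe that for such $\sF$ the lifting functor only sees a truncation $\tau^{\geq -m}(T^*_x\sX)$, and use condition $(\beta)$ together with the $(-n)$-connectivity bound to see that this truncation is a genuine object of $\Coh(T)$, against which mapping commutes with all limits. The only (cosmetic) difference is in the packaging — the paper verifies the cofiltered-limit criterion of \cite[Chapter 1, Corollary 3.2.4]{GRII} and asserts genuineness of the truncations via a stabilizing presentation $\tau^{\geq -m}(T^*_x\sX)\simeq\tau^{\geq -m}(\sG_{j_m})$, whereas you assemble the corepresenting object $Y$ explicitly and justify genuineness by descending induction on Postnikov layers, which is if anything slightly more careful.
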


\begin{proof}
Notice that by \cite[Chapter 1, Corollary 3.2.4]{GRII} it is enough to check that the functor
\begin{align*}
    \QCoh(T)^{\leq 0} & \ra \Spc \\
    \sF & \mapsto \Maps_{T/}((T)_{\sF},\sX)
\end{align*}
commutes with cofiltered limits. Since the functor $\sF \rightsquigarrow S_{\sF}$ sends limits that exists in $\QCoh(T)^{\leq 0}$ to colimits, it is enough to check that for every cofiltered diagram $\{\sF_{i}\}_I$ in $\QCoh(S)^{\leq 0}$ whose limit is $\sF$ the canonical map
\[
    \sX(T_{\sF}) \ra \lim_{I}\sX(T_{\sF_i})
\]
is an equivalence. Because $\sX$ is convergent it is enough to check that for every $m \geq 0$ one has an equivalence
\begin{equation}
    \label{eq:m-coconnective-part-of-sX-commutes-with-filtered-colimits}
    \sX(\tau^{\leq m}(T_{\sF})) \ra \lim_{I}\sX(\tau^{\leq m}(T_{\sF_i})).
\end{equation}
Notice that for any $\sG \in \QCoh(T)^{\leq 0}$ one has an equivalence
\[
    \tau^{\leq m}(T_{\sG}) \simeq \tau^{\leq m}(T)_{\tau^{\geq -m}(\sG)}.
\]
Thus, in (\ref{eq:m-coconnective-part-of-sX-commutes-with-filtered-colimits}) we can assume that $\sF \in \QCoh(T)^{\geq -m,\leq 0}$. Moreover, since $\QCoh(T)^{\geq -m}$ is stable under limits we can assume that each $\sF_{i} \in \QCoh(T)^{\geq -m,\leq 0}$. Thus, we have that
\[
      \Maps_{T/}(T_{\sF},\sX) \simeq \Hom_{\Pro(\Coh(T))}(T^*_{x}\sX,\sF) \;\;\; \mbox{and} \;\;\; \lim_{I}\Maps_{T/}(T_{\sF_i},\sX) \simeq \lim_I\Hom_{\Pro(\Coh(T))}(T^*_{x}\sX,\sF_i).
\]
The coconnectivity assmption on $\sF$ and $\{\sF_{i}\}_I$ reduces the above to
\[
    \Hom_{\Pro(\Coh(T))}(T^*_{x}\sX,\sF) \simeq \Hom_{\Pro(\Coh(T))}(\tau^{\geq -m}(T^*_{x}\sX),\sF) 
\]
and
\[
\lim_I\Hom_{\Pro(\Coh(T))}(T^*_{x}\sX,\sF_i) \simeq \lim_I\Hom_{\Pro(\Coh(T))}(\tau^{\geq -m}(T^*_{x}\sX),\sF_i).
\]
However, we notice that if $H^k(T^*_{x}\sX)$ is coherent for every $k\in \bZ$ then
\[
    \tau^{\geq -m}(T^*_{x}\sX) \in \Coh(T)^{\geq -m,\leq n},
\]
in other words, given a presentation $T^*_{x}(\sX) \simeq \lim_{J}\sG_j$ where $\sG_{j} \in \Coh(T)^{\leq n}$, then $\tau^{\geq -m}(T^*_{x}\sX) \simeq \tau^{\geq -m}(\sG_{j_m})$ for some $j_m \in \bZ$. Thus, one obtains
\begin{align*}
    \Hom_{\Pro(\Coh(T))}(T^*_{x}\sX,\sF) & \overset{\simeq}{\ra} \Hom_{\Pro(\Coh(T))}(\tau^{\geq -m}(T^*_{x}\sX),\sF) \\
    & \simeq \Hom_{\Coh(T)}(\tau^{\geq -m}(\sG_{j_m}),\sF) \\
    & \simeq \lim_I\Hom_{\Coh(T))}(\tau^{\geq -m}(\sG_{j_m}),\sF_i) \\
    & \simeq \lim_I\colim_{J^{\rm op}}\Hom_{\Coh(T))}(\tau^{\geq -m}(\sG_{j}),\sF_i) \\
    & \simeq \lim_I\Hom_{\Pro(\Coh(T))}(\tau^{\geq -m}(T^*_{x}\sX),\sF_i) \\
    & \simeq \lim_I\Hom_{\Pro(\Coh(T))}(T^*_{x}\sX,\sF_i)
\end{align*}
This finishes the proof.
\end{proof}

\begin{warning}
In general, given an object $\sG \in \Pro(\Coh(S)^-)$ such that for every $k \in \bZ$ one has
\[
    H^k(\sG) \in \Coh(S)^{\heartsuit}
\]
does \emph{not} imply that $\sG \in \Coh(S)^-$. Indeed, simply consider $\sG := \lim_{n \geq 0}\sO_{S}\oplus \sO_{S}[1] \oplus \cdots \oplus \sO_{S}[n]$.
\end{warning}

\subsection{Relation between deformation theory and homogeneity}

There is a big overlap between some notions of deformation theory and homogeneity. We start with a result connecting formal cohesiveness to homogeneity with respect to local Artinian schemes.

\begin{prop}
\label{prop:equivalent-formally-cohesive}
Let $\sX$ be a prestack which admits a cotangent complex at every local Artinian affine scheme of finite type. Then the following are equivalent:
\begin{equivlist}
    \item $\sX$ is $W^{\rm lA}_{\rm all}$-homogeneous;
    \item $\sX$ is formally cohesive;
    \item for every point $(S \overset{x}{\ra} \sX) \in \SchafflA_{/\sX}$ and $(T^*(S) \ra \sF) \in \QCoh(S)^{\leq -1}_{T^*(S)}$ one has an isomorphism
    \[
        \Maps_{S/}(\SqZ(T^*(S) \ra \sF),\sX) \simeq \left\{\mbox{null homotopies of }T^*_{x}\sX \overset{(dx)^*}{\ra} T^*(S) \ra \sF\right\}
    \]
    where $\SqZ(T^*(S) \ra \sF)$ is the square-zero extension of $S$ determined by $\sF$\footnote{See \cite[Chapter 1, \S 5.1]{GRII} for a definition of square-zero extension.}.
\end{equivlist}
\end{prop}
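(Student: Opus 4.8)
The plan is to establish the cyclic chain of implications $(1)\Rightarrow(2)\Rightarrow(3)\Rightarrow(1)$; the cotangent-complex hypothesis will be used for $(2)\Rightarrow(3)$ and $(3)\Rightarrow(1)$, but not for $(1)\Rightarrow(2)$. One preliminary observation is used throughout: within $\SchafflA$ a closed embedding is automatically a nilpotent embedding, since if $A'\ra A$ is surjective on $\pi_0$ with $A'$ local Artinian then $\ker\!\big(H^0(A')\ra H^0(A)\big)$ lies in the maximal ideal of the Artinian local ring $H^0(A')$ and is therefore nilpotent. In particular a square as in \eqref{eq:pushout-square-formally-cohesive} is an instance of a $W^{\rm lA}_{\rm all}$-nil square \eqref{eq:P-nil-square} (take $\jmath_1=\imath_{01}$ and $f=\imath_{02}$, the latter being in particular a local morphism of local Artinian schemes), and applying $W^{\rm lA}_{\rm all}$-homogeneity to it yields exactly the pullback condition of Definition~\ref{defn:formally-cohesive}. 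This proves $(1)\Rightarrow(2)$.

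For $(2)\Rightarrow(3)$ I would use the presentation of square-zero extensions from \cite[Chapter~1, \S5.1]{GRII}: for $(S\overset{x}{\ra}\sX)\in\SchafflA_{/\sX}$, $\sF\in\QCoh(S)^{\leq-1}$ and $\gamma\colon T^*(S)\ra\sF$, the extension $\SqZ(\gamma)$ is the pushout of the span $S\overset{r}{\longleftarrow}S_{\sF}\overset{\rho_\gamma}{\longrightarrow}S$, where $r$ is the canonical retraction of the split square-zero extension $S_\sF$ and $\rho_\gamma$ is $r$ twisted by the derivation $\gamma$ (so $\SqZ(0)=S_{\sF[-1]}$). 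Since $r$ and $\rho_\gamma$ are isomorphisms on classical truncations they are closed, hence nilpotent, embeddings of local Artinian affine schemes of finite type, so this is a formally cohesive square; condition~$(2)$ therefore gives $\sX(\SqZ(\gamma))\simeq\sX(S)\underset{\sX(S_\sF)}{\times}\sX(S)$. Passing to the homotopy fibre over $x$ and using that $\sX$ admits a cotangent space at $S$, so that $\Maps_{S/}(S_\sF,\sX)\simeq\Maps_{\Pro(\QCoh(S)^-)}(T^*_x\sX,\sF)$ by Definition~\ref{defn:pro-cotangent-space-at-a-point-x}, one unwinds the resulting iterated fibre product into the space of null-homotopies of $T^*_x\sX\overset{(dx)^*}{\ra}T^*(S)\overset{\gamma}{\ra}\sF$, which is condition~$(3)$. (If $\sF$ is not bounded below one first reduces to the bounded case, writing $\sF$ as a limit of truncations, so that $S_\sF$ is genuinely in $\SchafflA$ and $\Lift_x(\sX)$ commutes with the relevant limit.)

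For $(3)\Rightarrow(1)$ I would take a $W^{\rm lA}_{\rm all}$-nil square \eqref{eq:P-nil-square}, with nilpotent embedding $\jmath_1\colon T_1\hra T'_1$ and arbitrary local morphism $f\colon T_1\ra T_2$, and first factor $\jmath_1$, using \cite[Chapter~1, \S5]{GRII}, as a finite tower of square-zero extensions $T_1=T_1^{(0)}\hra\cdots\hra T_1^{(m)}=T'_1$ (which exists because $T'_1$ is eventually coconnective with bounded coherent cohomology and $\classical{\jmath_1}$ has nilpotent ideal). Setting $T_2^{(k)}:=T_1^{(k)}\underset{T_1}{\sqcup}T_2$ one gets, by the pasting law for pushouts, a tower $T_2=T_2^{(0)}\hra\cdots\hra T_2^{(m)}=T'_2$ in which $T_2^{(k)}\hra T_2^{(k+1)}$ is the cobase change of $T_1^{(k)}\hra T_1^{(k+1)}$ along $T_1^{(k)}\ra T_2^{(k)}$, hence again a square-zero extension, classified by the pushforward along that map of the derivation classifying $T_1^{(k)}\hra T_1^{(k+1)}$. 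Applying condition~$(3)$ at $T_1^{(k)}$ and at $T_2^{(k)}$, and using that the cotangent complex of $\sX$ is compatible with the base change $T_1^{(k)}\ra T_2^{(k)}$ --- part of admitting a cotangent complex --- together with the corresponding $f^*\dashv f_*$ adjunction and the chain rule, one identifies the homotopy fibre of $\sX(T_2^{(k+1)})\ra\sX(T_2^{(k)})$ over any point with that of $\sX(T_1^{(k+1)})\ra\sX(T_1^{(k)})$ over its image in $\sX(T_1^{(k)})$. Telescoping over $k$ yields $\sX(T'_2)\simeq\sX(T'_1)\underset{\sX(T_1)}{\times}\sX(T_2)$, i.e.\ $W^{\rm lA}_{\rm all}$-homogeneity. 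In particular $(2)\Rightarrow(1)$ holds as $(2)\Rightarrow(3)\Rightarrow(1)$; this is where the cotangent-complex hypothesis enters the equivalence $(1)\Leftrightarrow(2)$.

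The step I expect to be the main obstacle is the bookkeeping in $(2)\Rightarrow(3)$: pinning down from \cite[Chapter~1, \S5.1]{GRII} the exact pushout presentation of $\SqZ(\gamma)$ --- which retraction of $S_\sF$ carries the twist, and the degree shift between $\sF$ and the ``ideal'' $\sF[-1]$ --- and then matching the iterated fibre product produced by formal cohesiveness with the space of null-homotopies, including the reduction to bounded $\sF$. A secondary point requiring care is the precise reference for factoring a nilpotent embedding of local Artinian affine schemes of finite type into finitely many square-zero extensions; this can be assembled from the Postnikov tower of the structure sheaf and the classical fact that the defining ideal of a closed subscheme of an Artinian scheme is nilpotent.
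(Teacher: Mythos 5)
Your proposal is correct and follows essentially the same route as the paper: the observation that closed embeddings of local Artinian schemes are automatically nilpotent, the reduction of a nilpotent embedding to a tower of square-zero extensions via \cite[Chapter 1, Proposition 5.5.3]{GRII}, the pushout presentation $S\sqcup_{S_{\sF}}S$ of a square-zero extension, and the identification of the lifting spaces through the pro-cotangent complex and the $(f^*,f_*)$ adjunction are exactly the ingredients of the paper's argument. The only difference is organizational — you close a cycle $(1)\Rightarrow(2)\Rightarrow(3)\Rightarrow(1)$ and spell out $(2)\Rightarrow(3)$, whereas the paper proves $(1)\Leftrightarrow(2)$ directly and defers $(2)\Leftrightarrow(3)$ to \cite[Proposition 17.3.6.1]{SAG}.
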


\begin{proof}
The implication (1) $\Rightarrow$ (2) is tautological once we notice that any closed embedding of local Artinian schemes is automatically nilpotent.

For (2) $\Rightarrow$ (1) we first notice that in the diagram (\ref{eq:P-nil-square}) $\jmath_1$ being a nilpotent embedding implies that $T'_1$ and $T'_2$ are also local Artinian schemes. By definition $T_1,T_2,T'_1$ and $T'_2$ are all eventually coconnective affine schemes. Thus by \cite[Chapter 1, Proposition 5.5.3]{GRII} we can reduce to the case where $\jmath_{1}$ is a square-zero extension, i.e.\ there exists $\gamma_1: T^*(T_1) \ra \sF_1$ for some $\sF_1 \in \QCoh(T_1)^{\leq -1}$ such that $T'_1 \simeq T_1 \sqcup_{(T_1)_{\sF_1}}T_1$ where the maps $\begin{tikzcd} (T_1)_{\sF_1} \ar[r,yshift=1.5pt] \ar[r,yshift=-1.5pt] & T_1 \end{tikzcd}$ are the canonical projection and the composite $(T_1)_{\sF_1} \overset{\gamma_1}{\ra} (T_1)_{T^*(T_1)} \overset{\mathfrak{d}}{\ra} T_1$ (see \cite[Chapter 1, \S 4.5]{GRII} for the definition of $\mathfrak{d}$). Since both morphisms $\begin{tikzcd} (T_1)_{\sF_1} \ar[r,yshift=1.5pt] \ar[r,yshift=-1.5pt] & T_1 \end{tikzcd}$ are closed embeddings\footnote{Notice however that the morphism $(T_1)_{T^*(T_1)} \overset{\mathfrak{d}}{\ra} T_1$ is not a closed embedding, but its precomposition with $(T_1)_{\sF_1} \overset{\gamma_1}{\ra} (T_1)_{T^*(T_1)}$ is.} by hypothesis we have an isomorphism
\[
    \sX(T'_1) \overset{\simeq}{\ra} \sX(T_1)\underset{\sX((T_1)_{\sF_1})}{\times}\sX(T_1).
\]
The pushout of a square-zero extension via any morphism $f:T_1 \ra T_2$ gives a square-zero extension, i.e.\ $T'_2 \simeq T_2 \sqcup_{(T_2)_{f_*\sF_1}}T_2$, So one has
\[
    \sX(T'_2) \overset{\simeq}{\ra} \sX(T_2)\underset{\sX((T_2)_{f_*\sF_1})}{\times}\sX(T_2).
\]
Recall that we need to check that
\[
    \Maps_{T_2/}(T'_2,\sX) \ra \Maps_{T_1/}(T'_1,\sX)
\]
is an isomorphism for each point $y_2:T_2 \ra \sX$.
Now we notice that since we assumed that $\sX$ admits a pro-cotangent space at every point, one has 
\[
    \sX(T'_2) \underset{\sX(T_2)}{\times}\{y_2\} \simeq \Map(T^*_{y_2}\sX,f_*\sF_1[-1]) \;\;\; \mbox{and} \;\;\; \sX(T'_1)\underset{\sX(T_1)}{\times}\{y_2\} \simeq \Map(T^*_{f\circ y_2}\sX,\sF_1[-1]).
\]
Finally, the assumption that $\sX$ actually admits a pro-cotangent complex and the adjunction $(f^*,f_*)$ gives that the two mapping spaces above are isomorphic.

For the proof of the equivalence (2) $\Longleftrightarrow$ (3) the same argument as that of \cite[Proposition 17.3.6.1]{SAG} works, where we just point out that in our situation we don't need to assume that $\sX$ is convergent. We leave the details to the reader.
\end{proof}

The following is the somewhat global version of the result above:

\begin{prop}
\label{prop:affine-homogeneous-is-deformation-theory}
Let $\sX$ be a convergent prestack, then the following are equivalent:
\begin{equivlist}
    \item $\sX$ is $W_{\rm all}$-homogeneous;
    \item $\sX$ admits deformation theory, i.e.\ $\sX$ admits a pro-cotangent complex and is infinitesimally cohesive.
\end{equivlist}
\end{prop}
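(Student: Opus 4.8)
The plan is to prove the two implications separately. The direction $(2)\Rightarrow(1)$ follows the template of the proof of Proposition \ref{prop:equivalent-formally-cohesive}, now with the pro-cotangent complex available on \emph{all} affine schemes rather than just local Artinian ones; the direction $(1)\Rightarrow(2)$ exploits that split square-zero extensions turn limits of quasi-coherent sheaves into pushouts of affine schemes, so that $W_{\rm all}$-homogeneity directly produces the pro-cotangent complex.

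For $(2)\Rightarrow(1)$, suppose $\sX$ has deformation theory. It is in particular infinitesimally cohesive, i.e.\ $W_{\rm nil.}$-homogeneous, so the task is to allow the horizontal leg $f$ of the square (\ref{eq:P-nil-square}) to be an arbitrary morphism rather than a nilpotent embedding. For fixed $f$, the class of nilpotent embeddings $\jmath_1$ for which the conclusion holds is closed under composition (push the first square out along $f$, then paste the two resulting pullback squares); using convergence together with the standard square-zero filtration of a nilpotent embedding on truncated schemes (cf.\ \cite[Chapter 1, \S 5]{GRII}), one reduces to $\jmath_1$ a square-zero extension classified by some $\sF_1\in\QCoh(T_1)^{\leq -1}$, so that $T'_1\simeq T_1\sqcup_{(T_1)_{\sF_1}}T_1$ with both structure maps closed embeddings. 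Infinitesimal cohesiveness gives $\sX(T'_1)\simeq\sX(T_1)\times_{\sX((T_1)_{\sF_1})}\sX(T_1)$; since the pushout of a square-zero extension along $f$ is again one, $T'_2\simeq T_2\sqcup_{(T_2)_{f_*\sF_1}}T_2$ and likewise $\sX(T'_2)\simeq\sX(T_2)\times_{\sX((T_2)_{f_*\sF_1})}\sX(T_2)$. Comparing homotopy fibers over a point $y_2\colon T_2\to\sX$ with $y_1=y_2\circ f$, the pro-cotangent space identifies these fibers with $\Map(T^*_{y_2}\sX,f_*\sF_1[-1])$ and $\Map(T^*_{y_1}\sX,\sF_1[-1])$; the base-change equivalence $f^*T^*_{y_2}\sX\simeq T^*_{y_1}\sX$ and the $(f^*,f_*)$-adjunction identify these compatibly, so $\sX(T'_2)\to\sX(T'_1)\times_{\sX(T_1)}\sX(T_2)$ is an equivalence on fibers over every point of $\sX(T_2)$, hence an equivalence.

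For $(1)\Rightarrow(2)$, convergence is assumed, and infinitesimal cohesiveness is immediate: a $W_{\rm nil.}$-nil square is a special case of a $W_{\rm all}$-nil square, so $\sX$ is $W_{\rm nil.}$-homogeneous, which is the same as infinitesimally cohesive as observed just after Definition \ref{defn:P-homogeneous}. To produce pro-cotangent spaces, fix $x\colon S\to\sX$. The functor $\sF\rightsquigarrow S_\sF=\Spec_S(\sO_S\oplus\sF)$ is contravariant and sends limits in $\QCoh(S)^{\leq 0}$ to colimits of affine schemes, so a pullback $\sF'=\sF_1\times_{\sF_2}0$ lying in $\QCoh(S)^{\leq 0}$ — equivalently, with $H^0(\sF_1)\to H^0(\sF_2)$ surjective — yields a pushout square of affine schemes with corners $S_{\sF_2}$, $S$, $S_{\sF_1}$, $S_{\sF'}$, in which $S_{\sF_2}\to S_{\sF_1}$ is a closed embedding with the square-zero, hence nilpotent, ideal $\ker(H^0(\sF_1)\to H^0(\sF_2))$ and $S_{\sF_2}\to S$ is the (arbitrary) structure map. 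This is a $W_{\rm all}$-nil square, so $W_{\rm all}$-homogeneity and passage to homotopy fibers over $x$ show that $\Lift_x(\sX)$ carries $\sF'$ to $\Lift_x(\sX)(\sF_1)\times_{\Lift_x(\sX)(\sF_2)}\Lift_x(\sX)(0)$; since the terminal object is preserved and, by the footnote to Definition \ref{defn:pro-cotangent-space-at-a-point-x}, such limits suffice, $\Lift_x(\sX)$ preserves finite limits and $T^*_x\sX\in\Pro(\QCoh(S)^-)$ exists.

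It remains to verify the base-change condition 2) of Definition \ref{defn:pro-cotangent-complex-at-a-point}, i.e.\ that $\colim_{f^*\sF\to\sF'}\Maps_{S/}(S_\sF,\sX)\to\Maps_{S'/}(S'_{\sF'},\sX)$ is an equivalence for every $f\colon S'\to S$ and $\sF'\in\QCoh(S')^{\leq 0}$ — equivalently, that $f^*T^*_x\sX\simeq T^*_{x'}\sX$. Here the input beyond infinitesimal cohesiveness is decisive: I would first cut the indexing category down, by a cofinality argument, to the $\phi\colon f^*\sF\to\sF'$ with $H^0(\phi)$ surjective (every index maps to such a one, e.g.\ after replacing $\sF$ by $\sF\oplus f_*\sF'$), for which $S'_{\sF'}\hookrightarrow S'\times_S S_\sF$ is a nilpotent embedding, and then apply $W_{\rm all}$-homogeneity to the squares built from this embedding together with the base-changed morphism $S'\times_S S_\sF\to S_\sF$ — a morphism which is \emph{not} a nilpotent embedding, so that the full class $W_{\rm all}$ is genuinely used. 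I expect this to be the main obstacle: the existence of pro-cotangent spaces is an essentially formal consequence of the pushout-square manipulation, whereas the base-change identification requires interleaving the homogeneity squares with the comma category $\{f^*\sF\to\sF'\}$ and checking the cofinality carefully. Once the pro-cotangent complex is in place, together with convergence and infinitesimal cohesiveness $\sX$ has deformation theory.
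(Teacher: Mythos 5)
Your direction $(2)\Rightarrow(1)$ is correct and is essentially the argument the paper runs for Proposition \ref{prop:equivalent-formally-cohesive}: reduce by convergence and pasting to a single square-zero extension, use infinitesimal cohesiveness to split $\sX(T'_1)$ and $\sX(T'_2)$, and compare fibers over $y_2$ via the base-change property of the pro-cotangent complex and the $(f^*,f_*)$-adjunction. (The paper itself disposes of both directions by citing \cite[Chapter 1, Proposition 7.2.5]{GRII} and \cite[Chapter 1, Corollary 6.3.5]{GRII}, so you are supplying more detail than it does.) Likewise, in $(1)\Rightarrow(2)$ your derivation of pro-cotangent \emph{spaces} from the fact that $\sF\rightsquigarrow S_\sF$ turns the pullbacks $0\times_{\sF_2}\sF_1$ into $W_{\rm all}$-nil squares is correct, and you rightly note that the projection $S_{\sF_2}\ra S$ is not a nilpotent embedding, so the full class $W_{\rm all}$ is what makes this work.

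The genuine gap is exactly where you flag it: condition 2) of Definition \ref{defn:pro-cotangent-complex-at-a-point} is never actually verified. You describe a cofinality reduction and a family of homogeneity squares you ``would'' use, but the indexing-category manipulation is left unchecked, and as sketched it is more complicated than necessary. The efficient argument uses a \emph{single} $W_{\rm all}$-nil square per $\sF'$: for $f\colon S'\ra S$ and $\sF'\in\QCoh(S')^{\leq 0}$, the zero section $S'\ra S'_{\sF'}$ is a nilpotent embedding and $f\colon S'\ra S$ is arbitrary, and the pushout is the affine scheme $S\underset{S'}{\sqcup}S'_{\sF'}\simeq S_{f_*\sF'}$ (the ring is $\sO_S\times_{\sO_{S'}}(\sO_{S'}\oplus\sF')\simeq\sO_S\oplus f_*\sF'$, with $f_*$ connective since $f$ is affine). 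Applying $W_{\rm all}$-homogeneity and passing to fibers over $x\in\sX(S)$ gives $\Lift_{x\circ f}(\sX)(\sF')\simeq\Lift_{x}(\sX)(f_*\sF')$, naturally in $\sF'$; combined with the $(f^*,f_*)$-adjunction on pro-objects this is precisely the statement $T^*_{x\circ f}\sX\simeq f^*T^*_{x}\sX$, i.e.\ the colimit formula of condition 2). With that step filled in, your proof is complete; without it, the pro-cotangent complex (as opposed to pro-cotangent spaces) has not been produced, and that is the substantive content of the implication.
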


\begin{proof}
The direction (1) $\Rightarrow$ (2) is \cite[Chapter 1, Proposition 7.2.5]{GRII}. 

For (2) $\Rightarrow$ (1) we notice that since $\sX$ is convergent it is enough to consider $T_1$ and $T_2$ eventually coconnective in the diagram (\ref{eq:P-nil-square}), which implies that $T'_1$ and $T'_2$ are also eventually coconnective. Since any nilpotent embedding between two eventually coconnective schemes can be obtained as a finite succession of square-zero extension, by \cite[Chapter 1, Corollary 6.3.5]{GRII} we are done.
\end{proof}

\subsection{Smoothness}

In proving the representability result one is faced with the following conundrum. One wants to construct a smooth morphism \emph{into our prestack} from a disjoint union of affine schemes. The problem is that one only knows how to define smoothness for a morphism (cf.\ \cite[Chapter 2, \S 4.1.7]{GRI}) that we already know is representable\footnote{Namely a morphism $f:\sX \ra S$ from an $n$-geometric stack to an affine scheme is smooth if given an atlas $\sZ \ra \sX$ the composite $\sZ \ra S$, which is an $(n-1)$-geometric stack is smooth, then the definition goes by induction.} which is not the case in our situation, since we do not have any representability assumption on the diagonal.

The strategy we take in this section is to start with the notion of formal smoothness, which make sense for arbitrary prestacks, then impose a further finiteness condition on the morphism to obtain the notion of differentially smooth. When the prestack admits deformation theory, one can describe formal smoothness and the finiteness condition, whence differentially smooth, in terms of the cotangent complex. Finally, we perform a sanity check that in the case when the prestack is geometric differentially smooth is the same as smooth in the sense from \cite[Chapter 1, \S 4.1]{GRI}.

\begin{rem}
There is another discussion to be had in the case where we don't assume our base ring to be a $\bQ$-algebra, since in that set up one also has the notion of fiber smooth, which in general does not agree with differentially smooth. In that context one can also prove a version of the \hyperref[thm:main-result]{Main Theorem}, however one needs to restart the discussion by reconsidering the definition of a geometric stack, since we can now start the inductive process by considering either the notion of fiber smoothness or differentially smoothnes, which don't agree in general. We will leave the investigation of this question for some future work.
\end{rem}

\begin{parag}
Recall that a morphism $f:\sX \ra \sY$ between prestacks is said to be \emph{formally smooth} if given any nilpotent embedding of affine schemes $S \ra S'$ the canonical map
\[
    \sX(S') \ra \sX(S) \underset{\sY(S)}{\times}\sY(S')
\]
induces a surjection on connected components.
\end{parag}

Before discussing the notion for arbitrary prestacks we recall the following characterization of smoothness for morphisms between affine schemes.

\begin{lem}
\label{lem:smoothness-for-affines}
Consider a morphism $f:U \ra S$ between affine schemes, then the following are equivalent:
\begin{equivlist}
    \item $f$ is smooth, i.e.\ $f$ is flat and $\classical{f}$ is smooth;
    \item $f$ is fiber smooth, i.e.\ $f$ is flat, almost of finite presentation and for every field point $u:\Spec\,k \ra U$ one has $H^{-1}(T^*_u(T/S)) = 0$;
    \item $f$ is differentially smooth, i.e.\ $f$ is almost of finite presentation and formally smooth. 
\end{equivlist}
\end{lem}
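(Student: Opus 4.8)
The plan is to funnel all three conditions through the relative cotangent complex $T^*(U/S)$, by showing that (1), (2) and (3) are each equivalent to the condition $(\star)$: \emph{$f$ is almost of finite presentation and $T^*(U/S)$ is a finite locally free $\mathcal{O}_U$-module concentrated in cohomological degree $0$}. A morphism of affine schemes always admits a cotangent complex, so this is well posed, and once $f$ is almost of finite presentation $T^*(U/S)$ is connective and almost perfect by Proposition \ref{prop:properties-cotangent-complex}(a).

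\medskip\noindent\emph{$(\star)\Leftrightarrow$(3).} If $T^*(U/S)$ is finite locally free in degree $0$, then $\mathrm{Ext}^i_{\mathcal{O}_U}(T^*(U/S),M)=0$ for all $i\geq 1$ and all $\mathcal{O}_U$-modules $M$; the obstruction calculus for lifts along a square-zero extension (\cite[Chapter 1, \S 5]{GRII}) together with the fact that a nilpotent embedding of (eventually coconnective) affine schemes is a finite tower of square-zero extensions (\cite[Chapter 1, Corollary 6.3.5]{GRII}) shows that every lifting problem against a nilpotent embedding is solvable, so $f$ is formally smooth, and with almost finite presentation this is (3). Conversely, (3) says $f$ is almost of finite presentation and that the identity $\mathrm{id}_{\mathcal{O}_U}$ lifts along every square-zero extension $\widetilde{\mathcal{O}}_U\to\mathcal{O}_U$ over $\mathcal{O}_S$; letting $\widetilde{\mathcal{O}}_U$ range over the square-zero extensions by discrete modules $N$ and their shifts $N[j]$, realising every class in the pertinent $\mathrm{Ext}$-groups, the solvability of the lifting problem for $\mathrm{id}_{\mathcal{O}_U}$ forces $\mathrm{Ext}^i_{\mathcal{O}_U}(T^*(U/S),N)=0$ for all $i\geq 1$ and all discrete $N$. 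A Postnikov-tower induction on the connective complex $T^*(U/S)$ then shows $H^0(T^*(U/S))$ is projective and $H^{-i}(T^*(U/S))=0$ for $i\geq 1$, and almost perfectness makes $H^0$ finitely generated, hence finite locally free: this is $(\star)$.

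\medskip\noindent\emph{$(\star)\Leftrightarrow$(1).} By the structure theory of smooth morphisms of spectral schemes (cf.\ \cite[\S 11.2]{SAG}), $(\star)$ holds precisely when $f$ is, Zariski-locally on $U$, an \'etale morphism followed by a projection $\mathbb{A}^n_S\to S$; any such $f$ is flat with $\classical{f}$ smooth, so $(\star)\Rightarrow$(1). Conversely, if $f$ is flat then $\classical{U}\simeq U\times_S\classical{S}$, whence base change of the cotangent complex gives $T^*(U/S)\otimes_{\mathcal{O}_U}\mathcal{O}_{\classical{U}}\simeq T^*(\classical{U}/\classical{S})$; if moreover $\classical{f}$ is smooth this is finite locally free in degree $0$, and $\classical{f}$ being smooth is of finite presentation, so the flat morphism $f$ with $\pi_0 f$ of finite presentation is itself of finite presentation, hence almost of finite presentation and $T^*(U/S)$ almost perfect. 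Evaluating at points, $T^*(U/S)\otimes_{\mathcal{O}_U}\kappa(u)\simeq T^*(\classical{U}/\classical{S})\otimes\kappa(u)$ is concentrated in degree $0$ for every point $u$, so $T^*(U/S)$ has Tor-amplitude $[0,0]$, i.e.\ is flat; a flat almost perfect module is finite locally free (\cite{SAG}), which gives $(\star)$.

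\medskip\noindent\emph{$(\star)\Leftrightarrow$(2), and the crux.} That $(\star)\Rightarrow$(2) follows from the previous paragraph: (1) supplies flatness, $(\star)$ supplies almost finite presentation, and $T^*_u(U/S)$ lies in degree $0$, so $H^{-1}(T^*_u(U/S))=0$. For (2)$\Rightarrow(\star)$, flatness identifies $T^*_u(U/S)$ with $T^*_u(\classical{U}/\classical{S})$, so (2) says the classical relative cotangent complex has vanishing $H^{-1}$ at every field point; by Nakayama and almost perfectness this yields $H^{-1}(T^*(\classical{U}/\classical{S}))=0$ and $\Omega^1_{\classical{U}/\classical{S}}$ flat, and then, using that residue-field extensions are separable in characteristic zero (so that the generic fibre dimension equals $\mathrm{rank}\,\Omega^1$), a local presentation $\classical{U}=V(I)\subset\mathbb{A}^n_{\classical{S}}$ exhibits $I$ as a regular immersion of complementary codimension, forcing $T^*(\classical{U}/\classical{S})=\Omega^1_{\classical{U}/\classical{S}}$ into degree $0$; hence $\classical{f}$ is smooth and, with (2)'s finiteness hypothesis, $(\star)$ holds. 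The hard part is $(\star)\Leftrightarrow$(3): ``formally smooth'' is only a surjectivity-on-$\pi_0$ statement and is a priori far weaker than projectivity and single-degree concentration of $T^*(U/S)$, so extracting the latter needs both the square-zero obstruction theory (with the reduction of nilpotent embeddings to square-zero towers) and a careful homotopical argument that vanishing of all higher $\mathrm{Ext}$-groups into discrete modules forces concentration in degree $0$. The characteristic-zero hypothesis enters only in (2)$\Rightarrow(\star)$, through separability of residue fields; in positive characteristic (2) is genuinely weaker than (1) and (3).
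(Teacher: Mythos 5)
Your proof is correct under the paper's standing characteristic-zero hypothesis, but it takes a genuinely different and much more self-contained route than the paper. The paper's proof is essentially a pair of citations: (ii) $\Leftrightarrow$ (iii) is quoted directly from \cite[Proposition 11.2.4.4]{SAG}, and (i) $\Leftrightarrow$ (ii) is reduced, via flat base change and \cite[Remark 11.2.3.5, Proposition 11.2.4.1]{SAG}, to the classical equivalence of smoothness with the na\"ive cotangent complex being finite projective in degree $0$. You instead interpolate a fourth condition $(\star)$ (almost of finite presentation plus $T^*(U/S)$ finite locally free in degree $0$) and prove all three equivalences to it by hand: square-zero obstruction theory plus the reduction of nilpotent embeddings to towers of square-zero extensions for $(\star)\Leftrightarrow(3)$, flat base change of the cotangent complex for $(\star)\Leftrightarrow(1)$, and a Jacobian-criterion argument for $(2)\Rightarrow(\star)$. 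What your route buys is transparency about which condition is doing what, at the cost of re-deriving (in sketch form --- the ``Postnikov-tower induction'' and the regular-immersion step are stated rather than carried out) material that the paper is content to cite.

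One substantive correction: your closing claim that characteristic zero enters \emph{only} in $(2)\Rightarrow(\star)$ is wrong, and the error already sits inside your $(1)\Rightarrow(\star)$ step. After flat base change you identify $T^*(U/S)\otimes_{\mathcal{O}_U}\mathcal{O}_{\classical{U}}$ with $T^*(\classical{U}/\classical{S})$, which is the \emph{spectral} ($\bE_\infty$) cotangent complex of the classical morphism, and you then assert that classical smoothness of $\classical{f}$ forces it to be finite locally free in degree $0$. That identification of the spectral cotangent complex of a classically smooth map with $\Omega^1$ in degree $0$ is exactly where characteristic zero is used (it fails already for $\mathbb{F}_p[x]$ over $\mathbb{F}_p$), and the same point is needed at the end of your $(2)\Rightarrow(\star)$ argument, where the regular-immersion computation only controls the \emph{algebraic} cotangent complex. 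Consequently your parenthetical about positive characteristic is backwards: there, (1) and (2) remain equivalent (both are conditions on the flat classical map, since $H^{-1}$ of the spectral and algebraic cotangent complexes agree at field points), while it is (3), i.e.\ your $(\star)$, that becomes strictly stronger. This does not affect the validity of the proof for the lemma as stated, but the bookkeeping should be fixed.
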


\begin{proof}
The equivalence (ii) $\Longleftrightarrow$ (iii) is \cite[Proposition 11.2.4.4]{SAG}, notice that here we are using the standing assumption that or base has characteristic $0$.

For (i) $\Longleftrightarrow$ (ii) we notice that since in both cases we assume that $f$ is flat one has
\[
    \classical{f}:\classical{U} \overset{\simeq}{\ra} U\underset{S}{\times}\classical{S} \ra \classical{S}
\]
By Remark 11.2.3.5 \cite{SAG} one has that $f$ is fiber smooth if and only if $\classical{f}$ is fiber smooth, when seen as a morphism of (derived) affine schemes. Then Proposition 11.2.4.1 gives that $\classical{f}$ is fiber smooth if and only $\classical{f}$ satisfies one of the more widely know definitions of smoothness for a morphism of commutative rings, e.g.\ \cite[Tag 00T1]{stacks-project} that says that $\classical{f}$ is smooth if the na\"ive cotangent complex of $\classical{f}$, i.e.\ $\tau^{\geq -1}(T^{*}(\classical{U}/\classical{S}))$ is quasi-isomorphic to a finite projective module in degree 0, that is exactly statement (3) in \cite[Proposition 11.2.4.1]{SAG}.
\end{proof}

\begin{defn}
\label{defn:differentially-smooth}
Given a morphism $f:\sX \ra \sY$ of prestacks we say that $f$ is \emph{differentially smooth} if
\begin{defnlist}
    \item $f$ is formally smooth;
    \item $f$ is locally almost of finite presentation.
\end{defnlist}
\end{defn}

\begin{lem}
Assume that $f$ admits deformation theory. Then one has
\begin{resultlist}
    \item $f$ is formally smooth if and only if for any $(S \overset{x}{\ra} \sX) \in \Schaff_{/\sX}$ and $\sF \in \QCoh(S)^{\heartsuit}$ one has
    \[
    Maps_{\QCoh(S)^-}(T^*_{x}(\sX/\sY),\sF) \in \Vect^{\leq 0};
    \]
    \item $f$ is locally almost of finite presentation if and only if $T^*(\sX/\sY)$ is an almost perfect object of $\QCoh(\sX)$.
\end{resultlist}
\end{lem}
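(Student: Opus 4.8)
Statement (b) is a direct consequence of Proposition~\ref{prop:properties-cotangent-complex}. If $T^*(\sX/\sY)$ is almost perfect then $f$ is locally almost of finite presentation by part (b) of that proposition, which only requires that $f$ admit deformation theory. Conversely, if $f$ is locally almost of finite presentation, then since $f$ already admits deformation theory its pro\-cotangent complex is in fact corepresentable (cf.\ \cite[Corollary 17.4.2.2]{SAG}), so part (a) of Proposition~\ref{prop:properties-cotangent-complex} applies and shows that $T^*(\sX/\sY) \in \QCoh(\sX)$ is almost perfect.

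For (a), the plan is to pass through the obstruction\-theoretic description of the relative pro\-cotangent complex, which is legitimate precisely because $f$ admits deformation theory. First I would reduce the defining lifting property of formal smoothness---surjectivity on $\pi_0$ for every nilpotent embedding of affine schemes---to the case of square\-zero extensions whose module is a shift of a discrete one. Exactly as in the proof of Proposition~\ref{prop:affine-homogeneous-is-deformation-theory}: convergence of $f$ lets one assume the affine schemes eventually coconnective; \cite[Chapter 1, Corollary 6.3.5]{GRII} expresses a nilpotent embedding of such as a finite composite of square\-zero extensions; and a further Postnikov dévissage of the classifying module reduces to square\-zero extensions by modules of the form $\sF[m]$ with $\sF \in \QCoh(S)^{\heartsuit}$ and $m \geq 0$.

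Given such a square\-zero extension $S \hookrightarrow S'$, a point $x\colon S \to \sX$ over $\sY$, and a lift $y'\colon S' \to \sY$ of $f\circ x$, relative deformation theory produces an obstruction class in $\mathrm{Ext}^{1}(T^*_x(\sX/\sY),\sF[m]) \simeq \mathrm{Ext}^{m+1}(T^*_x(\sX/\sY),\sF)$ whose vanishing is necessary and sufficient for the existence of a lift $S' \to \sX$, the space of such lifts then being a torsor over $\Map_{\QCoh(S)^-}(T^*_x(\sX/\sY),\sF[m])$; moreover this obstruction is the image of the class of $S'$ under $(dx)^{*}\colon \mathrm{Ext}^{1}(T^*(S/\sY),\sF[m]) \to \mathrm{Ext}^{1}(T^*_x(\sX/\sY),\sF[m])$. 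Now the condition in (a), namely $\Map_{\QCoh(S)^-}(T^*_x(\sX/\sY),\sF) \in \Vect^{\leq 0}$ for every $x$ and every discrete $\sF$, is nothing but the vanishing of $\mathrm{Ext}^{j}(T^*_x(\sX/\sY),\sF)$ for all $j \geq 1$; hence it forces all of the above obstructions to vanish, and all the lifting problems are solvable, which is the ``only if'' direction (the condition implies formal smoothness).

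For the converse I would argue contrapositively, and this is where the one genuinely delicate point lies. After the reduction above, formal smoothness says exactly that $(dx)^{*}$ annihilates $\mathrm{Ext}^{\geq 1}(T^*(S/\sY),\sF)$ for every point $x\colon S\to\sX$ and every discrete $\sF$; by the transitivity triangle $T^*_x(\sX/\sY)\to T^*(S/\sY)\to T^*(S/\sX)$ the image of $(dx)^{*}$ on $\mathrm{Ext}^{1}(-,\sF)$ is the kernel of $\mathrm{Ext}^{1}(T^*_x(\sX/\sY),\sF)\to\mathrm{Ext}^{2}(T^*(S/\sX),\sF)$. To upgrade ``every obstruction vanishes'' to ``the obstruction groups themselves vanish'' one must therefore show that, whenever $\mathrm{Ext}^{\geq 1}(T^*_{x_0}(\sX/\sY),\sF_0)\neq 0$ for some $(S_0,x_0,\sF_0)$, there is a test point over which $(dx)^{*}$ is surjective on the relevant $\mathrm{Ext}$\-groups. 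I would do this by first using compatibility of the pro\-cotangent complex with base change (Definition~\ref{defn:pro-cotangent-complex-at-a-point}(2)) to reduce to $\sY$ affine with $x_0$ a section, and then reducing, along a formally étale chart when one is available, to the relative cotangent complex of a morphism of affine schemes, where the equivalence ``formally smooth $\Leftrightarrow$ cotangent complex concentrated and flat in degree $0$'' holds with no finiteness hypothesis---the finiteness\-free half of Lemma~\ref{lem:smoothness-for-affines} together with \cite[\S 11.2]{SAG}; in the absence of such a chart one invokes instead the corresponding statement for prestacks with deformation theory from \cite[Chapter 1]{GRII}. This matching of the realizable obstructions with the full obstruction group is the main obstacle; everything else is bookkeeping of the dévissages above together with the finite\-limit preservation built into the definition of a pro\-cotangent complex.
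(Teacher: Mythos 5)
The paper disposes of this lemma by citation alone: part (a) is \cite[Chapter 1, Proposition 7.3.3]{GRII} and part (b) is \cite[Corollary 17.4.2.2]{SAG}. Your treatment of (b) is fine and amounts to the same citation routed through Proposition \ref{prop:properties-cotangent-complex}. Your forward direction of (a) (Ext-vanishing implies formal smoothness) is also a reasonable reconstruction of the standard argument: dévissage to square-zero extensions by shifts of discrete modules, after which the obstruction classes land in groups that vanish by hypothesis.

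The gap is in the converse of (a), and you have correctly located it yourself: formal smoothness only tells you that $(dx)^{*}\colon \mathrm{Ext}^{1}(T^{*}(S/\sY),\sF)\to\mathrm{Ext}^{1}(T^{*}_{x}(\sX/\sY),\sF)$ is zero, i.e.\ (by the transitivity triangle) that $\mathrm{Ext}^{1}(T^{*}_{x}(\sX/\sY),\sF)$ injects into $\mathrm{Ext}^{2}(T^{*}(S/\sX),\sF)$, not that it vanishes. Your proposed repair does not close this. Reducing to ``the relative cotangent complex of a morphism of affine schemes along a formally étale chart'' presupposes a chart that a general prestack with deformation theory does not have (producing such charts is the content of \S\ref{subsec:def-thy-bootstrap} and Propositions \ref{prop:approximate-chart}--\ref{prop:smooth-chart}, under much stronger hypotheses than this lemma carries), and the ``finiteness-free half of Lemma \ref{lem:smoothness-for-affines}'' does not exist in the paper: all three conditions of that lemma have finite-presentation hypotheses built in. Finally, your fallback of ``invoking the corresponding statement for prestacks with deformation theory from \cite[Chapter 1]{GRII}'' is circular: that statement \emph{is} Proposition 7.3.3, i.e.\ exactly the assertion (a) you are trying to prove, and exactly the citation the paper uses. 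So the hard direction of (a) is not established by the proposal; as written it should simply be replaced by the reference. (A small additional slip: you label ``the condition implies formal smoothness'' as the ``only if'' direction; it is the ``if'' direction.)
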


\begin{proof}
(1) is Proposition 7.3.3 in \cite{GRII}.

(2) is Corollary 17.4.2.2 in \cite{SAG}.
\end{proof}

\begin{rem}
We notice that often in \cite{DAG} one has the assumption that $T^*_x(\sX/\sY)$ is perfect and the dual of a connective object in $\QCoh(S)$, we notice that this gives
\[
    Maps_{\QCoh(S)^-}(T^*_{x}(\sX/\sY),\sF) \simeq \Gamma(S,T^*(\sX/\sY)^{\vee}\otimes \sF) \in \Vect^{\leq 0},
\]
so in particular it implies that $\sX \ra \sY$ is formally smooth at the point $x:S \ra \sX$, i.e.\ the composite $S \overset{x}{\ra} \sX \ra \sY$ is formally smooth.
\end{rem}

\begin{prop}
\label{prop:differentially-smooth-is-smooth}
Assume the morphism $f:\sX \ra \sY$ is geometric. Then the following are equivalent:
\begin{equivlist}
    \item $f$ is differentially smooth;
    \item $f$ is smooth.
\end{equivlist}
\end{prop}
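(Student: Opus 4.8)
The plan is to reduce to the case of an affine target and then compare the two notions through a smooth atlas of the source, so that everything comes down to the affine case, Lemma~\ref{lem:smoothness-for-affines}, together with the lifting property of smooth morphisms of geometric stacks. First I would reduce to the situation $\sY = S$ affine: for ``smooth'' this is literally the definition of a smooth $n$-geometric morphism, while for ``differentially smooth'' it holds because both formal smoothness and (relative) local almost finite presentation are stable under base change and can be tested after base change to affine points of $\sY$. Having done so, fix a smooth atlas $p:\sU \simeq \sqcup_I U_i \ra \sX$.

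For (i)~$\Rightarrow$~(ii), assume $f$ smooth. To see $f$ is formally smooth, note that any nilpotent embedding $S_0 \hra S_0'$ of affine schemes presents $(S_0',S_0)$ as a Henselian pair, so Lemma~\ref{lem:Henselian-lift-against-smooth-geometric-stacks}, applied to the smooth morphism $f$ of geometric stacks, supplies the desired lift; this is precisely the surjectivity on $\pi_0$ of $\sX(S_0') \ra \sX(S_0)\underset{\sY(S_0)}{\times}\sY(S_0')$ that defines formal smoothness. For local almost finite presentation, each composite $U_i \ra S$ is a smooth morphism of affine schemes, hence almost of finite presentation by Lemma~\ref{lem:smoothness-for-affines}, so $\sU \ra S$ is locally almost of finite presentation; since $p$ is a smooth cover, $\sX \ra S$ inherits this property by descent. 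Thus $f$ is differentially smooth.

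For (ii)~$\Rightarrow$~(i), assume $f$ differentially smooth. The atlas $p$ is itself a smooth morphism of geometric stacks, so by the implication just proved $p$ is differentially smooth; as formal smoothness and local almost finite presentation are stable under composition, the composite $\sU \overset{p}{\ra} \sX \overset{f}{\ra} S$ is differentially smooth, and hence so is each $U_i \ra S$. Being a morphism of affine schemes, $U_i \ra S$ is then smooth by Lemma~\ref{lem:smoothness-for-affines}, and since this holds for the chosen atlas, the definition of a smooth $n$-geometric morphism gives that $f$ is smooth. The point requiring the most care here is not conceptual but organizational: verifying that differential smoothness is insensitive to affine base change on the target, unwinding ``smooth $\Rightarrow$ locally almost of finite presentation'' and ``smooth $\Rightarrow$ formally smooth'' for geometric morphisms (the descent along $p$ and the exact invocation of Lemma~\ref{lem:Henselian-lift-against-smooth-geometric-stacks}), and checking that an atlas of $\sX$ may indeed be used to test smoothness of $f$.
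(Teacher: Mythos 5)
Your overall architecture (reduce to an affine target, then transfer everything through a smooth atlas down to Lemma \ref{lem:smoothness-for-affines}) matches the paper's, which sets up an induction on the geometricity level with the affine case as the base. The reduction to $\sY$ affine is fine, and the direction ``differentially smooth $\Rightarrow$ smooth'' via composing the atlas with $f$ is sound modulo the other direction. (A cosmetic point: your two paragraphs carry swapped labels --- in the statement (i) is ``differentially smooth'' and (ii) is ``smooth''.)

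The genuine gap is in your proof that a smooth geometric morphism is \emph{formally} smooth. Lemma \ref{lem:Henselian-lift-against-smooth-geometric-stacks} does not solve the lifting problem for the nilpotent embedding $S_0 \hra S_0'$ itself: it only produces an \'etale cover $S_0'' \ra S_0'$ and a lift defined over $S_0''$. Formal smoothness, as defined in the paper, demands surjectivity on $\pi_0$ of $\sX(S_0') \ra \sX(S_0)\underset{\sY(S_0)}{\times}\sY(S_0')$, i.e.\ a lift on $S_0'$ itself, and the \'etale sheaf property of $\sX$ alone does not let you descend a lift that exists only over a cover: you would need compatible descent data over $S_0''\underset{S_0'}{\times}S_0''$, equivalently the vanishing of an obstruction class in a positive-degree cohomology group. (This is exactly what makes $BG \ra \pt$ formally smooth: the obstruction to extending a torsor along a square-zero extension lies in a quasi-coherent $H^2$ over an affine base, and it vanishes for a reason that an \'etale-local lift does not see.) The paper avoids the issue by never invoking the Henselian lemma here; it inducts on the biconditional ``$k$-geometric and differentially smooth iff $k$-geometric and smooth'', testing both notions on the $(k-1)$-geometric pieces $U_i\underset{\sY}{\times}T \ra T$ of an atlas until it bottoms out in Lemma \ref{lem:smoothness-for-affines}. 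You should either adopt that induction or replace the Henselian-lifting step by the obstruction-theoretic argument using the Tor-amplitude of the relative cotangent complex. Note also that your converse direction quotes this implication for the atlas map $p$, so the gap propagates there as well.
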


\begin{proof}
Let $f:\sX \ra \sY$ be an $n$-geometric morphism. By definition $f$ is smooth if for every affine scheme point $T \ra \sY$ the pullback morphism $\sX\underset{\sY}{\times}T \ra T$ is smooth, that is it is $k$-geometric for some $k\geq 0$ and smooth. Recall that an $n$-geometric morphism to an affine scheme is smooth if there exists an atlas $\sU \simeq \sqcup_I U_i \ra \sX$ with $U_i$ affine and $U_i\underset{\sY}{\times}T \ra \sX\underset{\sY}{\times}T \ra T$ smooth for each $i \in I$. Since $f$ is $n$-geometric one has that each $U_i\underset{\sY}{\times}T \ra T$ is $(n-1)$-geometric. Thus, by inducing on the statement: ($k$-geometric and weakly smooth if and only if $k$-geometric and smooth) we are reduced to the case of $n=-1$.

The statement for $n=-1$ reduces to Lemma \ref{lem:smoothness-for-affines}.
\end{proof}


\begin{parag}
Finally, we need a little result about lifting of Henselian pairs against smooth morphisms. Recall that $(R,I)$ where $R$ is a (derived) ring and $I \subset R$ is a Henselian pair if every diagram 
\[
    \begin{tikzcd}
    \Spec(R/I) \ar[r] \ar[d] & \Spec(B) \ar[d,"f"] \\
    \Spec(R) \ar[r] \ar[ru,dashed] & \Spec(A)
    \end{tikzcd}
\]
where $f$ is \'etale, admits a dashed arrow filling the diagram.
\end{parag}

\begin{lem}
\label{lem:Henselian-lift-against-smooth-geometric-stacks}
Let $f:\sX \ra \sY$ be a smooth geometric morphism between prestacks, given any diagram
\[
    \begin{tikzcd}
    \Spec(R/I) \ar[r] \ar[d] & \sX \ar[d,"f"] \\
    \Spec(R) \ar[r] & \sY
    \end{tikzcd}
\]
where $(R,I)$ is a Henselian pair, there exists an \'etale cover $\Spec(R') \ra \Spec(R)$ such that the following diagram
\[
    \begin{tikzcd}
    \Spec(R/I)\underset{\Spec(R)}{\times}\Spec(R') \ar[r,"x"] \ar[d] & \sX \ar[d,"f"] \\
    \Spec(R') \ar[r] \ar[ru,dashed] & \sY
    \end{tikzcd}
\]
admits a lifting.
\end{lem}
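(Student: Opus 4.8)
The plan is to reduce, by a base change followed by a single affine atlas, to the case of a smooth morphism of \emph{affine} schemes, where the statement is the classical lifting of sections of a smooth algebra over a Henselian pair (promoted to the derived setting). First, since $f$ is geometric, base change along $\Spec(R)\to\sY$ replaces the square by a section $s_0:\Spec(R/I)\to\sZ$ of a smooth $n$-geometric morphism $g:\sZ:=\sX\times_{\sY}\Spec(R)\to\Spec(R)$, and it suffices to find an \'etale cover $\Spec(R')\to\Spec(R)$ together with a section of $g\times_{\Spec(R)}\Spec(R')$ restricting to $s_0$ over $\Spec(R'/IR')$. Throughout I will use the standard facts about a Henselian pair $(R,I)$: that $I\subseteq\mathrm{rad}(\pi_0 R)$, so any open of $\Spec(R)$ whose image meets $V(I)$ surjectively is all of $\Spec(R)$; that \'etale covers and finite systems of orthogonal idempotents of $\Spec(R/I)$ lift to \'etale covers, resp.\ idempotents, of $\Spec(R)$; and that $(R',IR')$ is again a Henselian pair whenever $\Spec(R')\to\Spec(R)$ is \'etale.

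For the affine base case, suppose $\sZ=\Spec(A)$ with $A$ a smooth $R$-algebra; then $s_0$ is an $R$-algebra map $A\to R/I$, and I claim it lifts to an $R$-algebra map $A\to R$ with no cover needed. Its classical truncation lifts to a $\pi_0 R$-algebra map $\pi_0 A\to\pi_0 R$ by the classical lifting of sections of smooth algebras over a Henselian pair; one then climbs the Postnikov tower $R\simeq\lim_m\tau^{\le m}R$, each stage of which is a square-zero extension, using that by Lemma \ref{lem:smoothness-for-affines} the relative cotangent complex $T^*(\Spec(A)/\Spec(R))$ is a finite projective module concentrated in degree $0$, so the relevant obstruction groups vanish and a compatible lift exists in the limit. (Alternatively one invokes a spectral Henselian-lifting statement for smooth morphisms directly.)

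For the general case the key observation is that an atlas of an $n$-geometric stack over an affine scheme is affine over that scheme: choosing a smooth atlas $\pi:\sU=\sqcup_i U_i\to\sZ$ with each $U_i$ affine, $\pi$ is $(n-1)$-representable and smooth, and unwinding the definition of smoothness for geometric morphisms each composite $U_i\to\Spec(R)$ is a smooth morphism of affine schemes. Now form $\mathscr{W}:=\sU\times_{\sZ,s_0}\Spec(R/I)$; the projection $\mathscr{W}\to\Spec(R/I)$ is smooth, surjective and $(n-1)$-geometric, so composing with an affine atlas of $\mathscr{W}$ exhibits a smooth surjection from a disjoint union of affine schemes onto $\Spec(R/I)$, which admits a section over some \'etale cover $\bar T\to\Spec(R/I)$. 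Composing this section with the projection $\mathscr{W}\to\sU$ produces a map $\sigma_0:\bar T\to\sU$ with $\pi\circ\sigma_0=s_0|_{\bar T}$, i.e.\ a lift of $s_0$ through the atlas after an \'etale cover of $\Spec(R/I)$.

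Finally, lift $\bar T$ to an \'etale cover $\Spec(R')\to\Spec(R)$ (surjective by the radical fact above), replace $R,I,\sZ,\sU,s_0$ by their base changes along it, and so assume we already have a section $\sigma_0:\Spec(R/I)\to\sU$ over $\Spec(R)$ with $\pi\circ\sigma_0=s_0$. Since $\Spec(R/I)$ is quasi-compact, $\sigma_0$ meets only finitely many of the charts $U_i$; decomposing $\Spec(R/I)$ into the corresponding clopen pieces and lifting that decomposition to $\Spec(R)$, I reduce to the case $\sigma_0:\Spec(R/I)\to U_i$ over $\Spec(R)$ for a single $i$, whereupon the affine base case extends $\sigma_0$ to $\Spec(R)\to U_i$; composing with $\pi$ gives the desired lift $\Spec(R)\to\sZ$ of $s_0$, and unwinding the base changes yields the \'etale cover and lifting in the statement. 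I expect no single step to be deep: the argument is an assembly in which the Henselian hypothesis is used precisely to transport \'etale covers, idempotents, and the atlas section from the ``special fibre'' $\Spec(R/I)$ back to $\Spec(R)$, and the one point warranting real care is the derived affine base case — either the Postnikov promotion above, or else locating a ready-made spectral Henselian-lifting statement to cite in its place.
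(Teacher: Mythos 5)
Your proposal is correct and follows essentially the same strategy as the paper's proof: reduce to an affine target, descend through the atlas to smooth affine charts, use the Henselian hypothesis to transport the \'etale cover of $\Spec(R/I)$ to one of $\Spec(R)$, and conclude with the lifting statement for sections of smooth affine morphisms over a Henselian pair. The only real divergence is in that last step, where the paper simply cites Elkik's theorem while you give the classical Henselian lifting plus a Postnikov-tower obstruction argument (and you handle the general $n$ by descending directly to affine charts rather than by the paper's induction on geometricity); both routes are sound.
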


\begin{proof}
The question is local on $\sY$, so we can assume that $\sY \simeq S$ is affine. Since $f:\sX \ra S$ is $n$-geometric, let $\sU \ra \sX$ denote a smooth atlas where $\sU$ is $(n-1)$-geometric stacks, so by induction it is enough to consider the case $n=0$. 

Thus, we can take $\sU \ra \sX$ an \'etale atlas where $\sU:=\sqcup_I U_i$ is a disjoint union of affine schemes, by definition there exists an \'etale cover $\Spec\,\bar{R}' \ra \Spec(R/I)$ such that one has a factorization 
\[
x':\Spec\,\bar{R}' \ra U_i \ra \sX.
\]

Since $(R,I)$ is a Henselian pair, there exists an \'etale cover $\Spec\,R' \ra \Spec\,R$ such that $\Spec\,\bar{R}' \simeq \Spec(R/I) \underset{\Spec\,R}{\times}\Spec\,R'$, thus it is enough to find a lift for the following diagram
\[
    \begin{tikzcd}
    \Spec\,\bar{R}' \ar[r,"x'"] \ar[d] & U_i \ar[d] \\
    \Spec\,R' \ar[r] \ar[ru,dashed] & S
    \end{tikzcd}
\]
where $U_i \ra S$ is an affine morphism. But that is Elkik's Theorem \cite[\S II]{Elkik}.

\end{proof}

\section{Bootstrapping}

\subsection{Descent and homogeneity}

The following is an analogue of Lemma 1.9 in \cite{Hall-Rydh-algebraicity}.

\begin{lem}
\label{lem:descent-implies-formally-cohesive}
Assume that $\sX$ is $W^{\rm lA}_{\rm triv.}$-homogeneous and that $\sX$ is a sheaf for the \'etale topology, then $\sX$ is $W^{\rm lA}_{\rm all}$-homogeneous.
\end{lem}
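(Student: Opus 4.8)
The plan is to use \'etale descent to reduce a general $W^{\rm lA}_{\rm all}$-nil square to finite disjoint unions of genuine $W^{\rm lA}_{\rm triv.}$-nil squares, to which the hypothesis applies. Fix a pushout square as in (\ref{eq:P-nil-square}) with $f:T_1\ra T_2$ an arbitrary local morphism of local Artinian affine schemes of finite type and $\jmath_1$ a nilpotent embedding. The pushout $T_2\ra T'_2$ of $\jmath_1$ along $f$ is again a nilpotent embedding, so $T'_2$ is local Artinian of finite type and has the same residue field as $T_2$, say $\kappa_2$; let $\kappa_1$ be the residue field of $T_1$. Since $T_1\ra T_2$ is of finite type, $\kappa_1$ is finitely generated as a $\kappa_2$-algebra, hence finite over $\kappa_2$ by Zariski's lemma, and separable since the base has characteristic $0$. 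Fix a Galois closure $L$ of $\kappa_1/\kappa_2$.

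First I would produce one well-chosen \'etale cover. Being finite separable, $L$ is an \'etale $\kappa_2$-algebra, and since $\sO(T'_2)\sra\kappa_2$ has nilpotent kernel it lifts uniquely to an \'etale $\sO(T'_2)$-algebra; its spectrum $V$ is local Artinian of finite type with residue field $L$, and $q:V\ra T'_2$ is an \'etale cover. Passing to the Galois closure guarantees that $L^{\otimes_{\kappa_2}k}\otimes_{\kappa_2}\kappa_1^{\otimes_{\kappa_2}l}$ is a finite product of copies of $L$ for all $k,l\geq 0$. Consequently, writing $V^{\bu}$ for the \v{C}ech nerve of $q$ and base changing the square (\ref{eq:P-nil-square}) along $V^{\bu}$ (and along $T_1,T_2,T'_1$) --- which preserves pushout squares because $q$, hence every term $V^n$, is flat over $T'_2$ --- one gets at each simplicial level $n$ a pushout square whose four corners $V^n\underset{T'_2}{\times}T_1$, $V^n\underset{T'_2}{\times}T_2$, $V^n\underset{T'_2}{\times}T'_1$ and $V^n$ are finite disjoint unions of local Artinian schemes of finite type with residue field $L$, in which the base change of $\jmath_1$ is componentwise a nilpotent embedding and the base change of $f$ is componentwise an isomorphism on residue fields, hence componentwise in $W^{\rm lA}_{\rm triv.}$.

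Next I would check that $\sX$ sends each such pushout square to a pullback square. Because $\sX$ is an \'etale sheaf it carries finite disjoint unions of affine schemes to products, so one may first split everything along the finitely many connected components of the $V^n\underset{T'_2}{\times}T_2$-corner; since each component of the other two corners maps into exactly one of these, this reduces matters to a pushout square in which the ``$T_2$-corner'' is a single local Artinian scheme and the ``$T_1$-corner'' and ``$T'_1$-corner'' are finite disjoint unions. There one attaches the components of the $T'_1$-corner to the $T_2$-corner one at a time: each attachment is a bona fide $W^{\rm lA}_{\rm triv.}$-nil square --- the scheme built so far is local Artinian, and the map into it from the next component of the $T_1$-corner is still an isomorphism on residue fields --- so $W^{\rm lA}_{\rm triv.}$-homogeneity of $\sX$ applies at each stage; iterating, and using $\sX(\sqcup_j-)\simeq\prod_j\sX(-)$ once more, yields the desired pullback square at level $n$.

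Finally, \'etale descent identifies $\sX(T'_2)$ with $\lim_{\Delta^{\rm op}}\sX(V^{\bu})$ and, via the base-changed \'etale covers of $T_1,T_2,T'_1$, identifies $\sX(T_1),\sX(T_2),\sX(T'_1)$ with the corresponding $\lim_{\Delta^{\rm op}}$; since limits commute with the fibre product $\sX(T'_1)\underset{\sX(T_1)}{\times}\sX(T_2)$, the level-wise equivalences of the previous step assemble to an equivalence $\sX(T'_2)\overset{\simeq}{\ra}\sX(T'_1)\underset{\sX(T_1)}{\times}\sX(T_2)$, so $\sX$ is $W^{\rm lA}_{\rm all}$-homogeneous. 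I expect the main obstacle to be the middle two steps: arranging that a \emph{single} \'etale cover trivialises the residue extension at \emph{every} simplicial level (which is what forces the passage to the Galois closure), together with the bookkeeping that turns the resulting ``disjoint union of local Artinian'' pushout squares back into honest $W^{\rm lA}_{\rm triv.}$-nil squares by the one-component-at-a-time gluing.
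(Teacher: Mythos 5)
Your proposal is correct and follows essentially the same route as the paper's proof: use characteristic $0$ to make the residue field extension separable, lift a suitable finite \'etale algebra along the nilpotent thickening to get an \'etale cover of $T'_2$ trivialising the residue extension, base change to reduce to $W^{\rm lA}_{\rm triv.}$-nil squares, and conclude by \'etale descent. You in fact execute two points more carefully than the paper does (passing to the Galois closure so that \emph{every} \v{C}ech level splits into residue-trivial pieces, and the one-component-at-a-time reduction of the resulting disjoint-union pushout squares to genuine $W^{\rm lA}_{\rm triv.}$-nil squares), both of which the paper's proof elides.
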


\begin{proof}
We need to check that $\sX$ sends any pushout diagram
\[
    \begin{tikzcd}
    T_1 \ar[r,"f"] \ar[d,hook] & T_2 \ar[d,hook] \\
    T'_1 \ar[r] & T'_2
    \end{tikzcd}
\]
where $f$ is a local morphism to a pullback diagram of spaces. Let $t_i \in T_i$ and $t'_i \in T'_i$, for $i=1,2$, denote the closed points and $\kappa(t_i)$ and $\kappa(t'_i)$ their residue fields. We notice that there exists a finite extension $\Spec\,\widetilde{\kappa(t'_2)} \ra \Spec\,\kappa(t'_2)$ such that 
\[
\Spec\,\kappa(t_1)\underset{\Spec\,\kappa(t'_2)}{\times}\Spec\,\widetilde{\kappa(t'_2)} \overset{\simeq}{\ra} \Spec\,\kappa(t_1).
\]
Since we assumed that our base has characteristic $0$ any finite extension is separable, hence by \cite[Corollaire 10.3.2]{EGAIII1} there exists an \'etale cover $\bar{T'_2} \ra \classical{T'_2}$ such that $\bar{T'_2}\underset{T'_2}{\times}\Spec\,\kappa(t'_2) \simeq \Spec\,\widetilde{\kappa(t'_2)}$. Furthermore, by \cite[Theorem 7.5.0.6]{HA} there exists an \'etale cover $\widetilde{T'_2} \ra T_2$ such that
\[
    \widetilde{T'_2}\underset{T'_2}{\times}\classical{T'_2} \overset{\simeq}{\ra} \bar{T'_2}.
\]
Thus, we obtain that considering the pullbacks $\widetilde{T_1} := T_1 \underset{T'_2}{\times}\widetilde{T'_2}$, $\widetilde{T_2} := T_2 \underset{T'_2}{\times}\widetilde{T'_2}$ and $\widetilde{T'_1} := T'_1 \underset{T'_2}{\times}\widetilde{T'_2}$ by the assumption that $\sX$ is $W^{\rm lA}_{\rm triv.}$-homogeneous one has an equivalence
\[
    \sX(\widetilde{T'_2}) \overset{\simeq}{\ra} \sX(\widetilde{T'_1})\underset{\sX(\widetilde{T_1})}{\times}\sX(\widetilde{T_2}).
\]
Thus, by \'etale descent of $\sX$ we are done.
\end{proof}

\subsection{Local to global homogeneity}

We start with the following analogue of Lemma B.2 from \cite{Hall-Rydh-algebraicity}.

\begin{lem}
\label{lem:H1-for-laft-affines}
    Let $f:\sX \ra \sY$ be a morphism of $S$-prestacks, the following are equivalent:
    \begin{equivlist}
        \item $f$ satisfies $(H^{\rm P}_1)$;
        \item the diagonal morphism $\Delta_{\sX/\sY}:\sX \ra \sX\underset{\sY}{\times}\sX$ is $P$-homogeneous.
    \end{equivlist}
    In addition, if $\sX,\sY$ are Zariski stacks and locally almost of finite presentation, and $P$ is Zariski local, then we have 
    \begin{equivlist}
        \item[(3)] we only need to check condition (i) or (ii) for affine schemes which are almost of finite presentation over $S$.
    \end{equivlist}
    In particular, given a prestack $\sX \ra S$ if $\Delta_{\sX/S}$ is geometric then $\sX$ satisfies $(H^{\rm W_{\rm all}}_{1})$.
\end{lem}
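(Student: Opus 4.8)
The plan is to prove the equivalence (i)$\Leftrightarrow$(ii) first --- this is the real content --- and then deduce (3) and the final assertion. Fix a $P$-nil square $Q$ as in (\ref{eq:P-nil-square}) and set $M:=\sX(T'_1)\underset{\sX(T_1)}{\times}\sX(T_2)$ and $N:=\sY(T'_1)\underset{\sY(T_1)}{\times}\sY(T_2)$, so that the map occurring in $(H^{\rm P}_1)$ for $f$ is the gap map $\sX(T'_2)\ra\sY(T'_2)\underset{N}{\times}M$. Since fiber products of prestacks are objectwise, the same reshuffling of iterated fiber products as in the proof of Lemma \ref{lem:compatible-relative-absolute} gives compatible identifications $(\sX\underset{\sY}{\times}\sX)(T'_2)\simeq\sX(T'_2)\underset{\sY(T'_2)}{\times}\sX(T'_2)$ and $(\sX\underset{\sY}{\times}\sX)(T'_1)\underset{(\sX\underset{\sY}{\times}\sX)(T_1)}{\times}(\sX\underset{\sY}{\times}\sX)(T_2)\simeq M\underset{N}{\times}M$, under which the square expressing ``$\Delta_{\sX/\sY}$ is $P$-homogeneous against $Q$'' becomes
\[
    \begin{tikzcd}
    \sX(T'_2) \ar[r] \ar[d] & M \ar[d] \\
    \sX(T'_2)\underset{\sY(T'_2)}{\times}\sX(T'_2) \ar[r] & M\underset{N}{\times}M
    \end{tikzcd}
\]
with vertical maps the relative diagonals of $\sX(T'_2)\ra\sY(T'_2)$ and of $M\ra N$, and top map $\sX(T'_2)\ra M$.

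I would then invoke the elementary fact that, for any commutative square of spaces with corner maps $A\ra B$, $A\ra C$, $B\ra D$, $C\ra D$, the gap map $A\ra C\underset{D}{\times}B$ is $(-1)$-truncated --- equivalently, fully faithful by Lemma \ref{lem:categorical-conditions-for-spaces} --- if and only if the square of relative diagonals
\[
    \begin{tikzcd}
    A \ar[r] \ar[d] & B \ar[d] \\
    A\underset{C}{\times}A \ar[r] & B\underset{D}{\times}B
    \end{tikzcd}
\]
is Cartesian; this comes from the canonical identification $A\underset{C\times_D B}{\times}A\simeq(A\underset{C}{\times}A)\underset{B\times_D B}{\times}B$ (a direct computation of iterated fiber products), under which the diagonal of $A$ corresponds to the gap map of the diagonal square. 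Applying this with $A=\sX(T'_2)$, $B=M$, $C=\sY(T'_2)$, $D=N$ matches ``$f$ satisfies $(H^{\rm P}_1)$ against $Q$'' with ``$\Delta_{\sX/\sY}$ is $P$-homogeneous against $Q$''; letting $Q$ vary over all $P$-nil squares gives (i)$\Leftrightarrow$(ii). The $(H^{\rm P}_2)$ variant is not needed here.

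For (3) the ``only if'' direction is immediate, and for the converse I would argue by approximation. Encode a general $P$-nil square by its defining span $(T_1\hookrightarrow T'_1,\ T_1\ra T_2)$, with $T_1\hookrightarrow T'_1$ a nilpotent embedding and $T_1\ra T_2$ in $P$, and present it as a cofiltered limit of spans $(T_{1,\alpha}\hookrightarrow T'_{1,\alpha},\ T_{1,\alpha}\overset{f_\alpha}{\ra}T_{2,\alpha})$ in which every term is almost of finite presentation over $S$, the first map is a nilpotent embedding, and $f_\alpha\in P$ for $\alpha$ large --- this is the standard finite-presentation approximation of affine schemes, closed immersions, and maps between them, with the Zariski-locality of $P$ and Zariski descent for $\sX,\sY$ used to globalize the locally-produced approximations. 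Pushout along a nilpotent embedding commutes with these cofiltered limits (on rings it is a pullback along a map surjective on $\pi_0$, hence commutes with filtered colimits of connective $\bE_{\infty}$-rings), so the pushout $T'_{2}$ of the original span is $\lim_\alpha T'_{2,\alpha}$; since $\sX$ and $\sY$ are locally almost of finite presentation, the comparison map occurring in $(H^{\rm P}_1)$ (resp.\ $(H^{\rm P}_2)$) for $Q$ is the filtered colimit of those for the $Q_\alpha$. As filtered colimits in $\Spc$ are exact and both full faithfulness and essential surjectivity of maps of spaces are stable under filtered colimits, the condition for $Q$ follows from the conditions for the $Q_\alpha$.

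For the final assertion I would apply (i)$\Leftrightarrow$(ii) to the structure morphism $\sX\ra S$: then $\sX$ satisfies $(H^{\rm W_{\rm all}}_{1})$ if and only if $\Delta_{\sX/S}\colon\sX\ra\sX\underset{S}{\times}\sX$ is $W_{\rm all}$-homogeneous as a morphism, and by Lemma \ref{lem:compatible-relative-absolute} (in both its $H_1$ and $H_2$ forms) this can be tested after base change along affine points of $\sX\underset{S}{\times}\sX$. If $\Delta_{\sX/S}$ is geometric, every such base change is a geometric stack over an affine scheme, hence convergent, infinitesimally cohesive, and admitting a pro-cotangent complex (geometric stacks have corepresentable deformation theory; cf.\ \cite[Theorem 5.6.4]{DAG} and Proposition \ref{prop:n-geometric-has-deformation-theory}), so it is $W_{\rm all}$-homogeneous by Proposition \ref{prop:affine-homogeneous-is-deformation-theory}; therefore $\Delta_{\sX/S}$ is $W_{\rm all}$-homogeneous and $\sX$ satisfies $(H^{\rm W_{\rm all}}_{1})$. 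The one genuinely delicate step of this plan is the compatible finite-presentation approximation of the span underlying a $P$-nil square in (3) --- in particular arranging that the nilpotent embedding and the property $f\in P$ both descend to finite level, which is exactly where the Zariski hypotheses are essential --- whereas (i)$\Leftrightarrow$(ii) is a purely formal manipulation of fiber products once the gap-map criterion above is in hand.
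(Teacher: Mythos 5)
Your proposal is correct, but the core equivalence (i)$\Leftrightarrow$(ii) is established by a genuinely different and more formal route than the paper's. The paper first invokes Lemma \ref{lem:compatible-relative-absolute} to reduce (1)$\Rightarrow$(2) to showing that each affine base change $D_{f,U} = \sX\underset{(\sX\underset{\sY}{\times}\sX)}{\times}U$ is $P$-homogeneous, and then verifies $(H^{\rm P}_1)$ and $(H^{\rm P}_2)$ for $D_{f,U}$ separately by hand (an element-chase on connected components for the first, an explicit diagram-lifting for the second), with a separate uniqueness-of-lifts argument for (2)$\Rightarrow$(1). You instead prove both directions at once from the single identity $A\underset{C\times_D B}{\times}A\simeq(A\underset{C}{\times}A)\underset{B\times_D B}{\times}B$, which exhibits ``the gap map is $(-1)$-truncated'' and ``the square of relative diagonals is Cartesian'' as literally the same condition; after the same reshuffling of iterated fiber products used in Lemma \ref{lem:compatible-relative-absolute}, this matches $(H^{\rm P}_1)$ for $f$ against a fixed $P$-nil square with $P$-homogeneity of $\Delta_{\sX/\sY}$ against that same square. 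This buys a shorter, square-by-square proof that avoids the slightly muddled surjectivity/injectivity bookkeeping in the paper's version; what it gives up is the explicit description of the base changes $D_{f,U}$, which the paper's phrasing makes available for later use. For (3) the paper simply declares the reduction ``clear,'' whereas you spell out the finite-presentation approximation; note only that invoking local almost-finite-presentation of $\sX,\sY$ strictly requires working with the truncations ${}^{\leq n}\sX$ on $n$-coconnective affines, so the approximation of the span should be organized truncation by truncation (harmless for the classes $W^{\rm lA}_{\bullet}$, where all schemes involved are Artinian, hence eventually coconnective). Your treatment of the final assertion matches the paper's: geometricity of $\Delta_{\sX/S}$ plus Propositions \ref{prop:n-geometric-has-deformation-theory} and \ref{prop:affine-homogeneous-is-deformation-theory} give (ii), hence (i).
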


\begin{proof}
(1) $\Rightarrow$ (2). By Lemma \ref{lem:compatible-relative-absolute} it is enough to prove that for every affine scheme $U \ra \sX\underset{\sY}{\times}\sX$ the base change $D_{f,U}:= \sX \underset{(\sX\underset{\sY}{\times}\sX)}{\times}U$ is $P$-homogeneous. So we need to prove that for any pushout diagram as in (\ref{eq:P-nil-square}) the canonical morphism
\[
    D_{f,u}(T'_2) \ra D_{f,u}(T'_1)\underset{D_{f,U}(T_1)}{\times}D_{f,U}(T_2)
\]
induces a surjection on connected components. Assume to the contrary that there are $x,y \in \pi_0(D_{f,u}(T'_2))$ whose images $\overline{x_1}$ and $\overline{x_2}$ on the right-hand side of the above map agree. This means that they belong to the same connected component of
\begin{equation}
    \label{eq:D-pullback}
    \left(\sX \underset{(\sX\underset{\sY}{\times}\sX)}{\times}U\right)(T'_1) \underset{\left(\sX \underset{(\sX\underset{\sY}{\times}\sX)}{\times}U\right)(T_1)}{\times}\left(\sX \underset{(\sX\underset{\sY}{\times}\sX)}{\times}U\right)(T_2)
\end{equation}
By the same argument as in the proof of Lemma \ref{lem:compatible-relative-absolute} one can rewrite (\ref{eq:D-pullback}) as
\begin{equation}
    \label{eq:glued-diagonal}
    \left(\sX(T'_1)\underset{\sX(T_1)}{\times}\sX(T_2)\right)\underset{(\sX\underset{\sY}{\times}\sX)(T'_1)\underset{(\sX\underset{\sY}{\times}\sX)(T_1)}{\times}(\sX\underset{\sY}{\times}\sX)(T_2)}{\times}\left(U(T'_1)\underset{U(T_1)}{\times}U(T_2)\right).
\end{equation}
Since $\sX$ satisfies $(H^{1}_P)$ and $U$ is $P$-homogeneous, one obtains that the images $\overline{x_1}$ and $\overline{x_2}$ in (\ref{eq:glued-diagonal}) agree on the terms $\left(\sX(T'_1)\underset{\sX(T_1)}{\times}\sX(T_2)\right)$ and $\left(U(T'_1)\underset{U(T_1)}{\times}U(T_2)\right)$. To finish the argument, we notice that to check that two elements of $\sX(T'_2)$ agree on $\sY(T'_2)$ it is enough to check that they agree on $\sY(T'_1)\underset{\sY(T_1)}{\times}\sY(T_2)$ from the pullback diagram
\[
    \begin{tikzcd}
    \sX(T'_2)\underset{\sY(T'2)}{\times}\sX(T'_2) \ar[r] \ar[d] & \sX(T'_2)\underset{\sY(T'_1)\underset{\sY(T_1)}{\times}\sY(T_2)}{\times}\sX(T'_2) \ar[d] \\
    \sY(T'_2) \ar[r] & \sY(T'_2)\underset{\sY(T'_1)\underset{\sY(T_1)}{\times}\sY(T_2)}{\times}\sY(T'_2)
    \end{tikzcd}
\]
and the fact that we started with elements of $D_{f,U}(T'_2)$, i.e.\ morphisms $x'_1,x'_2:T'_2 \ra U$ and $x''_1,x''_2:T'_2 \ra \sX$ which agree on $\sX\underset{\sY}{\times}\sX$.

To check that $D_{f,U}$ satisfies $(H^{\rm P}_2)$ we notice that we need to fill the dashed arrow in the diagram 
\[
    \begin{tikzcd}
    T_1 \ar[r] \ar[d,hook] & T_2 \ar[d,hook] \ar[rr] & & D_{f,U} \ar[r] \ar[d] & \sX \ar[d] \\
    T'_1 \ar[r] & T'_2 \ar[rru,dashed] \ar[rrrd] \ar[rrrd,xshift=-3pt,yshift=-5pt] \ar[rr,"g"] & & U \ar[r] & \sX\underset{\sY}{\times}\sX \ar[d,xshift=-3pt,"p_1"'] \ar[d,xshift=3pt,"p_2"] \\
    & & & & \sX
    \end{tikzcd}
\]
By definition of pullback it is enough to construct a dashed arrow into $\sX$. However that follows from noticing that the two bottom composites agree since $\sX$ satisfies $(H^1_P)$, so the morphism $g$ lifts to $\sX$ and we are done.

(2) $\Rightarrow$ (1) consider two morphisms $x_1,x_2:T'_2 \ra \sX$ whose restriction give the same commutative diagram
\[
    \begin{tikzcd}
    T_1 \ar[r] \ar[d,hook] & T_2 \ar[rdd] \ar[d,hook] & \\
    T'_1 \ar[rrd] \ar[r] & T'_2 \ar[rd,dashed] & \\
    & & \sX
    \end{tikzcd}
\]
We need to prove that $x_1$ and $x_2$ are nullhomotopic, i.e.\ that the dashed arrow above can be completed by either $x_1$ or $x_2$ into a commutative diagram. Consider the pullback $D_{f,T'_2}:=\sX \underset{\sX\underset{\sY}{\times}\sX}{\times}T'_2$, then for $i=1,2$ there exists a lift
\[
    \begin{tikzcd}
    T_1 \ar[r] \ar[d,hook] & T_2 \ar[d,hook] & D_{f,T'_2} \ar[d] \ar[r] & T'_2 \ar[d]\\
    T'_1 \ar[r] & T'_2 \ar[ru,dashed] \ar[r,"x_i"] & \sX \ar[r] & \sX\underset{\sY}{\times}\sX
    \end{tikzcd}
\]
since $D_{f,T'_2}$ is $P$-homogeneous. Moreover, since the lift is uniquely determined by the restriction of the diagram to $T'_1 \hookleftarrow T_1 \ra T_2$, $x_1$ and $x_2$ are then both given by the composite $T'_2 \ra D_{f,T'_2} \ra \sX$, which gives the result.

Assertion (3) is clear because we can reduce to considering diagrams (\ref{eq:P-nil-square}) where all affine scheme are almost of finite presentation over $S$.

The last assertion follows from Proposition \ref{prop:n-geometric-has-deformation-theory} and Proposition \ref{prop:affine-homogeneous-is-deformation-theory}, which gives (2) hence (1).
\end{proof}

The following is an analogue of Lemma B.3 in \cite{Hall-Rydh-algebraicity}.

\begin{lem} 
\label{lem:H2-for-Henselian-schemes}
    Consider a locally almost of finite presentation prestack $\sX$ such that $\sX$ satisfies $(H^{\rm P}_1)$, then the following are equivalent
    \begin{equivlist}
        \item $\sX$ satisfies $(H^{\rm P}_2)$;
        \item $\sX$ satisfies condition $(H^{\rm P}_2)$ for affine schemes in the subcategory $\Schaffft$;
        \item $\sX$ satisfies $(H^{\rm P}_2)$ for diagrams
        \begin{equation}
            \label{eq:Henselian-pushout-square}
            \begin{tikzcd}
                T_1 \ar[r,"f"] \ar[d,"\jmath"'] & T_2 \ar[d] \\
                T'_1 \ar[r] & T'_2
            \end{tikzcd}
        \end{equation}
        where $T_2$ is the Henselisation of an affine scheme of finite type at a closed point, and $f$ and $\jmath$ are finitely presented.
    \end{equivlist}
\end{lem}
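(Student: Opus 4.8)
The plan is to establish the cycle $(1)\Rightarrow(2)\Rightarrow(3)\Rightarrow(1)$. The implications $(1)\Rightarrow(2)$ and $(2)\Rightarrow(3)$ are formal, since each successive condition only asks that $(H^{\rm P}_2)$ hold for a smaller class of pushout squares; observe also that for $P\in\{W^{\rm lA}_{\rm triv.},W^{\rm lA}_{\rm all}\}$ every vertex of a $P$-nil square is automatically a local Artinian scheme of finite type, so the three conditions coincide and one may assume $P\in\{W_{\rm nil.},W_{\rm all}\}$. The real content is $(3)\Rightarrow(2)$, which passes from Henselian local $T_2$ to arbitrary $T_2$ of finite type, and $(2)\Rightarrow(1)$, which removes the finiteness hypothesis.

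For $(2)\Rightarrow(1)$ I would run a standard approximation argument off the hypothesis that $\sX$ is locally almost of finite presentation. Given an arbitrary $P$-nil square as in (\ref{eq:P-nil-square}), write $T'_1$, $T_1$ and $T_2$ as cofiltered limits of affine schemes of finite type over $S$ (equivalently, as $S$ is Noetherian, of finite presentation); because the morphism $f$, the nilpotent embedding $\jmath_1$, and the exponent of nilpotence of $\ker(\classical{\jmath_1})$ are all finitely presented data, after passing to cofinal subsystems one obtains a compatible cofiltered system of $P$-nil squares with finite-type vertices whose limit is the given square (the corresponding pushouts being again of finite type, and the condition ``$f_\alpha\in P$'' being arranged cofinally). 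Applying $\sX$ turns each of these limits into a filtered colimit of spaces; since filtered colimits commute with the finite limit computing the target of the comparison map and preserve surjectivity on $\pi_0$, hypothesis $(2)$ propagates to the original square.

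The substance is $(3)\Rightarrow(2)$. Fix a $P$-nil square with vertices of finite type and a class $\xi\in\pi_0\big(\sX(T'_1)\underset{\sX(T_1)}{\times}\sX(T_2)\big)$; the goal is to lift it to $\pi_0\sX(T'_2)$. Since $\jmath_1$ is a nilpotent embedding, so is the pushout $T_2\hra T'_2$; in particular the small \'etale sites of $T_2$ and of $T'_2$ coincide. Base changing the diagram $T'_1\hookleftarrow T_1\ra T_2$ along an \'etale $V\ra T_2$ and forming the pushout yields a $P$-nil square over $V$, so there is a well-defined space of lifts over each such $V$-square, which $(H^{\rm P}_1)$ forces to be $(-1)$-truncated. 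Now for each closed point $t$ of $\classical{T_2}$, base changing along $\Spec\,\mathcal{O}^{\rm h}_{\classical{T_2},t}\ra T_2$ produces a $P$-nil square of the shape allowed by $(3)$ (the Henselisation of an affine scheme of finite type at a closed point), which therefore admits a lift; writing $\Spec\,\mathcal{O}^{\rm h}_{\classical{T_2},t}$ as a cofiltered limit of \'etale neighbourhoods of $t$ in $T_2$ and using once more that $\sX$ is locally almost of finite presentation, the lift descends to a lift over the $P$-nil square attached to some \'etale neighbourhood $V_t\ra T_2$. Since $T_2$ is affine and Noetherian, every point of $T_2$ specialises to a closed point and hence lies in every open neighbourhood of it, so the $V_t$ jointly cover $T_2$ and, by quasi-compactness, finitely many of them already form an \'etale cover; by $(H^{\rm P}_1)$ the lifts over these finitely many $V_t$ agree, canonically and coherently, on all fibre products, hence assemble into a descent datum, and descending it gives the sought lift of $\xi$ over $T'_2$.

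The hard part will be this final descent step: one must know that the presheaf $V\mapsto\{\text{lifts over the }V\text{-square}\}$ on the small \'etale site of $T_2$ satisfies \'etale descent. This is exactly where $(H^{\rm P}_1)$ is used essentially --- it makes the spaces of lifts $(-1)$-truncated, so that descending a datum is a matter of patching a \emph{property} rather than gluing genuinely higher coherence data --- and I would carry it out following the argument of \cite[Lemma B.3]{Hall-Rydh-algebraicity}, combined with the insensitivity of the \'etale topology to nilpotent thickenings exploited above.
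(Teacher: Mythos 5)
Your proof follows essentially the same route as the paper's: the forward implications are formal, (2)~$\Rightarrow$~(1) is Noetherian approximation using that $\sX$ is locally almost of finite presentation, and (3)~$\Rightarrow$~(2) proceeds by Henselising at closed points, descending the resulting lift to an \'etale neighbourhood via local almost finite presentation, and gluing by combining $(H^{\rm P}_1)$ with \'etale descent (you are in fact slightly more careful than the paper in covering \emph{all} closed points rather than just one). Note only that your final descent step, exactly like the paper's, silently uses that $\sX$ is an \'etale sheaf --- a hypothesis not listed in the lemma but present wherever it is applied, e.g.\ in Lemma \ref{lem:main-lemma}.
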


\begin{proof}
The implications (1) $\Rightarrow$ (2) $\Rightarrow$ (3) are tautological. 

    For (3) $\Rightarrow$ (2), consider a pushout diagram
    \[
        \begin{tikzcd}
            S_1 \ar[r,"f"] \ar[d,hook,"\jmath_1"'] & S_2 \ar[d,"\jmath_2"] \\
            S'_1 \ar[r] & S'_2
        \end{tikzcd}    
    \]
    where $\jmath_1$ is a nilpotent embedding and $f$ belongs to the class $P$. Consider a closed point $s_2 \in S_2$ and let $T_2:=(S_2)^{\rm H.}_{s_2}$ be the Henselisation of $S_2$ at the point $s_2$, i.e.\ 
    \[
        T_2 \simeq \lim_{(U_2,u_2 \in |U_2|) \;| \; U_2\mbox{ an \'etale nbd. of }s_2}U_2,    
    \]
    and the induced morphism of residue fields $\Spec \kappa (u_2) \ra \Spec \kappa(s_2)$ is an isomorphism. We first notice that the Henselisation of $S_2$ lifts uniquely to a Henselisation $T'_2$ of $S'_2$. Indeed, by definition the morphism $\bar{f}: T_2 \ra S_2$ is pro-\'etale, i.e.\ the (co-)filtered limit of \'etale morphism, by Proposition 2.3.3 \cite{BS-Pro-etale} the relative cotangent complex $T^*(T_2/S_2)$ vanishes. By \cite[Proposition 5.5.3]{GRII} any nilpotent embedding $S_2 \hra S'_2$ is an iteration of square-zero extensions and since $\jmath_2$ is finitely presented there are only finitely many of them. In particular, we can assume that $S_2\hra S'_2$ is a square-zero extension, hence it is determined by a morphism 
    \[
        \alpha: T^*(S_2) \ra \sF    
    \]
    for some $\sF \in \QCoh(S_2)^{\leq -1}$. The vanishing of $T^*(T_2/S_2)$ implies that one has an equivalence $p^*_2(T^*(S_2)) \simeq T^*(T_2)$ which gives a morphism 
    \[
        (\bar{f})^*(\alpha): T^*(T_2) \ra (\bar{f})^*(\sF),
    \]
    that is a square-zero extension of $T_2$. Moreover, by the functoriality of square-zero extension we know that the following diagram commutes
    \[
        \begin{tikzcd}
            S_1 \ar[r] \ar[d,"\jmath"'] & T_2 \ar[d] \ar[r,"\bar{f}"] & S_2 \ar[d] \\
            S'_1 \ar[r] & T'_2 \ar[r] & S'_2
        \end{tikzcd}
    \]

    Consider the diagram
    \[
        \begin{tikzcd}
            \sX(S'_2) \ar[r] \ar[d] & \sX(T'_2) \ar[d] \\
            \sX(S'_1)\underset{\sX(S_1)}{\times}\sX(S_2) \ar[r,"\gamma"] & \sX(S'_1)\underset{\sX(S_1)}{\times}\sX(T_2).
        \end{tikzcd}    
    \]
    Given an element $y \in \pi_0(\sX(S'_1) \underset{\sX(S_1)}{\times} \sX(S_2))$, let $\gamma(y) \in \sX(S'_1)\underset{\sX(S_1)}{\times}\sX(T_2)$ be its image and $\widetilde{\gamma(y)} \in \sX(T'_2)$ a lift, which exists by the assumption of homogeneity in the Henselian situation. Since $\sX$ is locally almost of finite presentation, one has that 
    \[
        \colim_{U'_2\mbox{ an \'etale nbd. of }s'_2}\sX(U'_2) \overset{\simeq}{\ra} \sX(T'_2),
    \]
    so we can assume that $\widetilde{\gamma(y)}$ is given by some point $u'_2 \in \sX(U'_2)$. 
    
    Finally, we claim that $u'_2 \in \sX(U'_2)$ descends to a point of $x'_2 \in \sX(S'_2)$. Indeed, since $\sX$ is an \'etale sheaf we only need to show that there are isomorphism $\sigma: p^*_1(u'_2) \simeq p^*_2(u'_2)$ where $p^*_i:\sX(U'_2) \ra \sX(U'_2\underset{S'_2}{\times}U'_2)$ (for $i=1,2$), and higher isomorphisms witnessing the compatibilities for $\sigma$.

    Notice that the morphism $\sX(U'_2) \ra \sX(T'_2) \ra \sX(S'_1)\underset{\sX(S_1)}{\times}\sX(T_2)$ factors as follows:
    \begin{equation}
        \label{eq:factorization-for-descent}
        \begin{tikzcd}
            \sX(S'_2) \ar[r] \ar[d] & \sX(U'_2) \ar[r] \ar[d] & \sX(T'_2) \ar[d] \\
            \sX(S'_1)\underset{\sX(S_1)}{\times}\sX(S_2) \ar[r] & \sX(S'_1)\underset{\sX(S_1)}{\times}\sX(U_2) \ar[r] & \sX(S'_1)\underset{\sX(S_1)}{\times}\sX(T_2)
        \end{tikzcd}
    \end{equation}
    for some \'etale morphism $U_2 \ra T_2$. Consider the diagram
    \begin{equation}
        \label{eq:descent-isomorphism}
        \begin{tikzcd}
            \sX(U'_2) \ar[d,"\alpha"'] \ar[r,yshift=1.5pt,"p^*_1"] \ar[r,yshift=-1.5pt,"p^*_2"'] & \sX(U'_2\underset{S'_2}{\times}U'_2) \ar[d,"\alpha_2"] \\
            \sX(S'_1)\underset{\sX(S_1)}{\times}\sX(U'_2) \ar[r,yshift=1.5pt,"p^*_1"] \ar[r,yshift=-1.5pt,"p^*_2"'] & \sX(S'_1)\underset{\sX(S_1)}{\times}\sX(U'_2\underset{S'_2}{\times}U'_2)
        \end{tikzcd}    
    \end{equation}
    we obtain isomorphisms $\sigma': p^*_1(\alpha(u'_2)) \simeq p^*_2(\alpha(u'_2))$ since $\alpha(u'_2)$ comes from a point in $\sX(S'_1)\underset{\sX(S_1)}{\times}\sX(S_2)$ and the commutativity of (\ref{eq:descent-isomorphism}) gives $\alpha\circ p^*_1(u'_2) \simeq \alpha\circ p^*_2(u'_2)$. Finally, assumption $(H^{\rm P}_1)$ gives that $\alpha_2$ is fully faithful, so one obtains the isomorphism $\sigma: p^*_1(u'_2) \simeq p^*_2(u'_2)$.
    
    The implication (2) $\Rightarrow$ (1) is standard and we leave it to the reader.
\end{proof}

\begin{parag}
\label{parag:summary-cotangent-complex-conditions}
We summarize all the conditions regarding the cotangent complex.
\begin{enumerate}
    \item[${^{\rm Pro}(v)}_{\rm all}$] $\sX$ admits a pro-cotangent complex at every point $(S \overset{x}{\ra}\sX) \in \Schaff_{/\sX}$;
    \item[${^{\rm Pro}(v)}_{\rm ec}$] $\sX$ admits a pro-cotangent complex at every point $(S \overset{x}{\ra}\sX) \in \Schaffconv_{/\sX}$;
    \item[${^{\rm Pro}(v)}_{\rm ft}$] $\sX$ admits a pro-cotangent complex at every point $(S \overset{x}{\ra}\sX) \in (\Schaffconvft)_{/\sX}$;
    \item[$(v)_{\rm all}$] $\sX$ admits a cotangent complex at every point $(S \overset{x}{\ra}\sX) \in \Schaff_{/\sX}$;
    \item[$(v)_{\rm ec}$] $\sX$ admits a cotangent complex at every point $(S \overset{x}{\ra}\sX) \in \Schaffconv_{/\sX}$;
    \item[$(v)_{\rm ft}$] $\sX$ admits a cotangent complex at every point $(S \overset{x}{\ra}\sX) \in (\Schaffconvft)_{/\sX}$.
\end{enumerate}
Moreover, when the object $T^*_x\sX$ belongs to $\Pro(\QCoh(S)^-)_{\rm laft}$ (resp.\ $\QCoh(S)^{\rm aperf, -}$\footnote{The subcategory of almost perfect objects in $\QCoh(S)^-$.}) we decorate the conditions above as follows ${^{\rm Pro}(v)}^{\rm laft}_{\rm C}$ (resp.\ $(v)^{\rm laft}_{\rm C}$), where $C\in \{\rm all, ec, ft\}$.
\end{parag}

\subsection{Deformation theory}
\label{subsec:def-thy-bootstrap}

The following is the main technical input in bootstraping deformation theory from the conditions of the \hyperref[thm:main-result]{Main Theorem}.

\begin{lem}[Main Lemma]
\label{lem:main-lemma}
Let $p:\sX \ra S$ be a prestack over $S$ an excellent scheme satisfying:
\begin{enumerate}
    \item $\sX$ is a \'etale sheaf;
    \item $\sX$ is locally almost of finite presentation;
    \item $\sX$ is integrable;
    \item[$(iv)_{\rm triv}$] $\sX$ is $W^{\rm lA}_{\rm triv.}$-homogeneous;
    \item[$(v)_{\rm ft}$] $\sX$ admits a cotangent complex at every $(S \overset{x}{\ra} \sX) \in (\Schaffconvft)_{/\sX}$;
    \item[(viii)] the diagonal morphism $\Delta_{\sX/S}: \sX \ra \sX\underset{S}{\times}\sX$ is $n$-geometric for some $n \geq 0$.
\end{enumerate}
Then $\sX$ is $W_{\rm all}$-homogeneous.
\end{lem}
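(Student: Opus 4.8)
\noindent The plan is to reduce, via the bootstrapping results already established, to the single statement that $\sX$ admits a pro-cotangent complex at the Henselian-local points occurring in the reduced form of the $(H_2)$-axiom, and then to extract that deformation-theoretic input from the geometricity of the diagonal.

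\smallskip

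First I would carry out the reductions. Since $\sX$ is a $W^{\rm lA}_{\rm triv.}$-homogeneous \'etale sheaf, Lemma~\ref{lem:descent-implies-formally-cohesive} upgrades this to $W^{\rm lA}_{\rm all}$-homogeneity, equivalently to formal cohesiveness by Proposition~\ref{prop:equivalent-formally-cohesive} (whose hypothesis is supplied by (v)$_{\rm ft}$, as a local Artinian affine scheme of finite type lies in $\Schaffconvft$). By the last assertion of Lemma~\ref{lem:H1-for-laft-affines}, condition (viii) forces $\sX$ to satisfy $(H^{\rm W_{\rm all}}_1)$. Since $W_{\rm all}$-homogeneity of $\sX$ means exactly $(H^{\rm W_{\rm all}}_1)$ together with $(H^{\rm W_{\rm all}}_2)$, and $\sX$ is locally almost of finite presentation, Lemma~\ref{lem:H2-for-Henselian-schemes} reduces the proof to verifying $(H^{\rm W_{\rm all}}_2)$ for a pushout square~(\ref{eq:P-nil-square}) in which $T_2$ is the Henselisation of an affine scheme of finite type at a closed point and $f,\jmath$ are finitely presented. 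Since a finitely presented nilpotent embedding is a finite tower of square-zero extensions (\cite[Proposition~5.5.3]{GRII}), and $(H^{\rm W_{\rm all}}_1)$ is already known, one may moreover assume $\jmath\colon T_1\hookrightarrow T'_1$ is a single square-zero extension, classified by a map $T^*(T_1)\ra\sF_1$ with $\sF_1\in\QCoh(T_1)^{\leq -1}$; pushing out along $f$, the map $T_2\hookrightarrow T'_2$ is the square-zero extension classified by $f_*\sF_1$.

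\smallskip

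Next I would isolate the one remaining input. Writing $x_1:=x_2\circ f$, condition $(H^{\rm W_{\rm all}}_1)$ makes the fibre of $\sX(T'_2)\ra\sX(T'_1)\underset{\sX(T_1)}{\times}\sX(T_2)$ over a point $(x'_1,x_2,\varphi)$ a $(-1)$-truncated space, so it is enough to show it is non-empty; and that fibre is the fibre over $x'_1$ of the restriction map $\sX(T'_2)\underset{\sX(T_2)}{\times}\{x_2\}\ra\sX(T'_1)\underset{\sX(T_1)}{\times}\{x_1\}$. Hence, \emph{provided $\sX$ admits a pro-cotangent complex at $x_2$ and, compatibly, at $x_1$}, the computation in the proof of Proposition~\ref{prop:equivalent-formally-cohesive}, $(2)\Rightarrow(1)$, applies verbatim: the $(f^*,f_*)$-adjunction and the base-change identity $T^*_{x_1}\sX\simeq f^*T^*_{x_2}\sX$ give
\[
    \sX(T'_2)\underset{\sX(T_2)}{\times}\{x_2\}\;\simeq\;\Map\bigl(T^*_{x_2}\sX,\,f_*\sF_1[-1]\bigr)\;\simeq\;\Map\bigl(T^*_{x_1}\sX,\,\sF_1[-1]\bigr)\;\simeq\;\sX(T'_1)\underset{\sX(T_1)}{\times}\{x_1\},
\]
naturally in the restriction map, so the given extension $x'_1$ over $T'_1$ produces the desired extension of $x_2$ over $T'_2$. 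Thus everything comes down to producing a pro-cotangent complex at the Henselian-local point $x_2$, and here I would use (viii): since $\Delta_{\sX/S}$ is $n$-geometric, for any affine $T$ the morphism $T\ra\sX$ is $n$-geometric (its base change along any affine $T'\ra\sX$ is a pullback of $\Delta_{\sX/S}$) and hence carries corepresentable relative deformation theory by (the relative form of) Proposition~\ref{prop:n-geometric-has-deformation-theory}; interpreting the space of extensions of a point along a split square-zero extension as the space of sections of an $n$-geometric morphism and appealing to the deformation theory of geometric stacks shows $\Lift_{x}(\sX)$ is pro-corepresentable and stable under base change, i.e.\ $\sX$ admits a pro-cotangent complex at every affine point. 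To pin this down at the Henselian $T_2$ and reconcile it with the finite-type input (v)$_{\rm ft}$, one writes $T_2$ as a cofiltered limit of finite-type \'etale neighbourhoods and descends using that $\sX$ is locally almost of finite presentation, together with integrability~(iii) and the Henselian-lifting result Lemma~\ref{lem:Henselian-lift-against-smooth-geometric-stacks} (available since $S$ is excellent, so completions, Henselisations and Elkik-type algebraization are well-behaved) in order to move between $T_2$, its completion, and its finite-type approximations.

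\smallskip

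\noindent\textbf{Expected main obstacle.} The genuinely delicate point is the last one: checking that the pro-cotangent complex at $x_2$ obtained from the geometric diagonal agrees after restriction with the cotangent complex guaranteed by (v)$_{\rm ft}$ at (eventually coconnective) finite-type points, and base-changes correctly to the non-finite-type Henselian scheme $T_2$; this is exactly where conditions (ii), (iii), (v)$_{\rm ft}$, (viii) and the Elkik-type input (Lemma~\ref{lem:Henselian-lift-against-smooth-geometric-stacks}) are all jointly used. Once a pro-cotangent complex at $x_2$ is available, the remaining bookkeeping --- the tower of square-zero extensions, the compatibility of the identifications above with the restriction maps, and the passage from the absolute to the relative formulation --- is routine.
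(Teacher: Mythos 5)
Your opening reductions (Lemma \ref{lem:H1-for-laft-affines} to get $(H^{W_{\rm all}}_1)$ from the geometric diagonal, Lemma \ref{lem:H2-for-Henselian-schemes} to reduce $(H^{W_{\rm all}}_2)$ to Henselian local rings, and the reduction to a single square-zero extension) match the paper exactly. But the step everything then rests on --- that condition (viii) alone yields a pro-cotangent complex for $\sX$ at \emph{every} affine point, in particular at the Henselian point $x_2:T_2\ra\sX$ --- is a genuine gap, and in fact is essentially circular. Existence of a pro-cotangent space at $x$ means that $\Lift_x(\sX)$ carries pullbacks $0\underset{\sF_2}{\times}\sF_1$ to pullbacks of spaces; the fully faithful half of that assertion is what the geometric diagonal gives you (it controls $T\underset{\sX}{\times}T'$, i.e.\ \emph{pairs} of lifts), but the essential surjectivity half is precisely the $(H_2)$-type statement you are trying to prove. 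By Proposition \ref{prop:affine-homogeneous-is-deformation-theory}, ``$\sX$ admits a pro-cotangent complex and is infinitesimally cohesive'' is equivalent to the conclusion of the Main Lemma, so you cannot take it as an input. Relatedly, the space of extensions of $x$ along $T\hra T_{\sF}$ is not the space of sections of a geometric morphism: only the diagonal of $\sX$ is assumed geometric, not $\sX$ itself, and the relative deformation theory of the $n$-geometric morphism $T\ra\sX$ controls $T^*(T/\sX)$, not $T^*_x\sX$. If the diagonal alone implied deformation theory, conditions (ii), (iii), $(iv)_{\rm triv}$ and $(v)_{\rm ft}$ would be superfluous --- and your proposal indeed never uses them except in the vague final paragraph.

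The paper's actual mechanism for the essential surjectivity is different and is where those hypotheses enter: one restricts to the closed point $\Spec\,k\ra T_2\ra\sX$, uses Proposition \ref{prop:approximate-chart} (which consumes (ii), (iii), $(iv)_{\rm triv}$, $(v)_{\rm ft}$, (vi) and the excellence of $S$) to build an approximate chart $\Spec\,B\ra\sX$ with $H^{-1}(T^*(\Spec B/\sX))=0$, upgrades it to a formally smooth, almost finitely presented chart $\Spec\,C\ra\sX$ via Proposition \ref{prop:smooth-chart}, and then uses (viii) only to know that $T_2\underset{\sX}{\times}\Spec\,C\ra T_2$ is a \emph{smooth geometric} morphism so that Lemma \ref{lem:Henselian-lift-against-smooth-geometric-stacks} produces a section over the Henselian $T_2$. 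After some care with smooth loci, the extension problem is transported to $\Spec\,C$, where it is solvable because affine schemes are $W_{\rm all}$-homogeneous, and the composite $T'_2\ra\Spec\,C\ra\sX$ gives the required lift. To repair your argument you would need to replace the claimed bootstrap of a pro-cotangent complex at $x_2$ by this chart construction (or an equivalent source of formally smooth affine covers near the closed point of $T_2$).
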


\begin{proof}
First, since we assumed the diagonal is geometric Lemma \ref{lem:H1-for-laft-affines} implies that $\sX$ satisfies $(H^{W_{\rm all}}_1)$. So by Lemma \ref{lem:H2-for-Henselian-schemes} we are reduced to checking that the morphism
\[
    \sX(T'_2) \ra \sX(T'_1)\underset{\sX(T_1)}{\times}\sX(T_2)
\]
is essentially surjective for every pushout diagram as in (\ref{eq:Henselian-pushout-square}). In other words we want to complete the dashed arrow in the following diagram
\[
    \begin{tikzcd}
     & T'_1 \ar[rd] \ar[rrrd,bend left =50] & & & \\
     T_1 \ar[ru,hook,"\jmath"] \ar[dr,"f"] & & T'_2 \ar[rr,dashed] & & \sX \\
     & T_2 \ar[ru,hook] \ar[rrru,bend right=50] & & & 
    \end{tikzcd}
\]
Consider the restriction of $x_k:\Spec\,k \ra T_2 \ra \sX$ to the closed point of $T_2$. By Proposition \ref{prop:approximate-chart}\footnote{Since $\Spec\,k \ra \classical{S}$ is of finite type, notice that there exists an ideal $\fm_x \subset H^0(R)$ such that $H^0(R) \ra k$, factors as $H^0(R) \ra H^0(R)/\fm_x \ra k$ with $H^0(R)/\fm_x \ra k$ a finitely generated extension.} there exists a factorization
\[
    \Spec\,k \overset{x'_k}{\ra} \Spec(B) \ra \sX
\]
such that $H^{-1}(T^*_{x'_k}(\Spec(B)/\sX))=0$. Thus, by Proposition \ref{prop:smooth-chart} we can improve the factorization to
\[
    \Spec\,k \overset{x''_k}{\ra} \Spec(C) \overset{x_{C}}{\ra} \sX
\]
where $x_C$ is formally smooth. Now we notice that since $\sX \ra S$ and $\Spec(C) \ra S$ are locally almost of finite type, so is $x_C$\footnote{Indeed, a morphism is locally almost of finite presentation if and only if it is locally of finite generation to order $n$, for every $n \geq 0$ and the result follows from \cite[Proposition 4.2.3.3 (3)]{SAG}.}. Consider $\tilde{T_2}:= T_2 \underset{\sX}{\times}\Spec(C)$, which we know is $(n-1)$-geometric, since $\sX$ has $n$-geometric diagonal, then the pullback morphism
\[
    \pi_2: \tilde{T_2}:= T_2 \underset{\sX}{\times}\Spec(C) \ra T_2
\]
is formally smooth and locally almost of finite presentation, hence smooth by Proposition \ref{prop:differentially-smooth-is-smooth}. Thus, up to passing to an \'etale cover of $\Spec\,k$, which one can lift to an \'etale cover of $T_2$\footnote{Notice that since $\sX$ is a sheaf for the \'etale topology it is enough to solve the lifting problem for an \'etale cover.}, Lemma \ref{lem:Henselian-lift-against-smooth-geometric-stacks} gives a lift
\[
    \begin{tikzcd}
    \Spec\,k \ar[r] \ar[d] & \tilde{T_2} \ar[d,"\pi_2"] \\
    T_2 \ar[r] \ar[ru,dashed] & T_2
    \end{tikzcd}
\]

Let $\tilde{T_1}:=T_1 \underset{\sX}{\times}\Spec(C)$, and $\tilde{T'_i}:=T'_i \underset{\sX}{\times}\Spec(C)$ for $i=1,2$, the morphism $T'_1 \ra \Spec(C)$ is given by a lift of the following diagram
\[
    \begin{tikzcd}
    T_1 \ar[r] \ar[d] & \Spec(C) \ar[d] \\
    T'_1 \ar[r] \ar[ru,dashed] & \sX
    \end{tikzcd}
\]
which exists, since the right vertical morphism is formally smooth and $T_1 \hra T'_1$ is a nilpotent embedding. We denote by $\tilde{f}:\tilde{T_1} \ra \tilde{T_2}$ and $\tilde{\jmath}:\tilde{T_1} \ra \tilde{T'_1}$ the lifts of $f$ and $\jmath$, respectively.

Now consider $Z_2:=\tilde{f}(\tilde{T_1}\backslash \tilde{\jmath}^{-1}(T'^{\rm sm.}_1))$, where $\tilde{T'_1}^{\rm sm.}$ denote the smooth locus of the morphism $\tilde{T'_1} \ra T_1$\footnote{Notice that $\tilde{T_2} \ra T_2$ and $\tilde{T_1} \ra T_1$ are already smooth.}. Then we pass to the non-empty opens $\bar{T_2}:=\tilde{T_2}\backslash Z^{\rm c}_2$, where $Z^{\rm c}_2$ denotes the closure of $Z_2$, $\bar{T_1}:=\bar{T_2}\underset{\tilde{T_2}}{\times}\tilde{T_1}$ and $\bar{T'_1}:=\tilde{\jmath}(\bar{T_1})$. We claim that the section $s_2:T_2 \ra \tilde{T_2}$ factors through $\bar{T_2}$, since $Z^{\rm c}_2$ doesn't contain any point above $x''_C$, which follows from $Z_2$ not containing any point above $\Spec(C)$. 
Thus, one has a section $s_2:T_2 \ra \bar{T_2}$ which by restriction gives a section $s_1:T_1 \ra \bar{T_1}$ that extends to section $s'_1:T'_1 \ra \bar{T'_1}$. Consider the diagram 
\[
    \begin{tikzcd}[row sep={40,between origins}, column sep={40,between origins}]
      & \bar{T_1} \ar[hookrightarrow]{rr}\ar{dd}\ar{dl} & & \bar{T'_1} \ar{dd}\ar{dl} \\
    \bar{T_2} \ar[crossing over]{rr} \ar{dd} & & \bar{T'_2} \\
      & T_1  \ar[hookrightarrow]{rr}[xshift = -10pt]{\jmath} \ar{dl}[swap]{f} & &  T'_1 \ar{dl} \\
    T_2 \ar{rr} && T'_2 \ar[from=uu,crossing over]
    \end{tikzcd}
\]
where the top square is a pushout, i.e.\ defining $\bar{T'_2}$, the bottom square is also a pushout, and each side face is a pullback. Notice that we can glue the sections $s_1$, $s_2$ and $s'_1$ to obtain a section $s'_2:T'_2 \ra \bar{T'_2}$. Since $\Spec(C)$ is $W_{\rm all}$-homogeneous one has a dashed arrow making the diagram
\[
    \begin{tikzcd}
     & \bar{T'_1} \ar[rd] \ar[rrrd,bend left =50] & & & \\
     \bar{T_1} \ar[ru,hook,"\jmath"] \ar[dr,"f"] & & \bar{T'_2} \ar[rr,dashed,"x'_{2,C}"] & & \Spec(C) \\
     & \bar{T_2} \ar[ru,hook] \ar[rrru,bend right=50] & & & 
    \end{tikzcd}
\]
commutative. Thus, the composite $T'_2 \overset{s'_2}{\ra}\bar{T'_2} \overset{x'_{2,C}}{\ra} \Spec(C) \ra \sX$ solves our lifting problem and we are done.
\end{proof}

\section{Ingredients}

In this section we collect the main technical results that go into Lurie's version of the representability theorem. Our purpose here is to give enough details to convince the reader that the necessary ingredients work in our situation without redoing all the proofs.

\subsection{Formal Charts}

In any proof of this theorem some form of the following construction seems very relevant.

\begin{construction}
\label{cons:formal-nbd-of-a-point}
Let $y:\Spec\,k \ra \sY$ denote a field point in a prestack $\sX$. Assume that
\begin{condlist}
    \item the prestack $\sY$ satisfies 
        \begin{enumerate}
            \setcounter{enumi}{3}
            \item $\sY$ is $W^{\rm lA}_{\rm all}$-homogeneous,
            \item[$(v)_{\rm Art.}$] $\sY$ admits a cotangent complex at every local Artinian scheme of finite type,
            \item $\sY$ is convergent;
        \end{enumerate}
    \item $\sY$ is formally complete along $f$, i.e.\ $f_{\rm dR}:\Spec\,k_{\rm dR} \ra \sY_{\rm dR}$ is an isomorphism;
    \item one is given a morphism $\alpha:\sF \ra T^*(\Spec\,k/\sY)$, where $\sF$ is perfect of Tor-amplitude $\leq 0$ and $\Cofib \alpha$ is $1$-connective and almost perfect.
\end{condlist}
Then there exists a sequence
\[
    \begin{tikzcd}
    \Spec\,k = S_0 \ar[r] \ar[rrrd,"y_0"'] & S_1 \ar[r] \ar[rrd,"y_1"] & \cdots \ar[r] & S_n \ar[r] \ar[d,"y_n"] & \cdots \\
     & & & \sY & 
    \end{tikzcd}
\]
and morphisms $\alpha_n: \sF_n \ra T^*(S_n/\sY)$, where $\sF_n$ is perfect of Tor-amplitude $\leq 0$ and $\Cofib \alpha_n$ is $1$-connective. 

Consider
\[
    U := {^{\rm conv}\left(\colim_{n \geq 0}S_n\right)}
\]
the convergent completion of $\colim_{n \geq 0}S_n$. One has a canonical factorization
\[
    \Spec\,k \overset{y'}{\ra} U \overset{y''}{\ra} \sY.  
\]
\end{construction}

\begin{proof}
Assume $n \geq 0$ and that we have constructed $y_n:S_n \ra \sY$ and a morphism $\sF_n \ra T^*(S_n/\sY)$ in $\QCoh(S_n)$ satisfying the required conditions.

Let $S_{n+1}$ be the square-zero extension of $S_n$ by $\Cofib(\alpha_n)[-1]$, i.e.\ 
\[
    S_{n+1}:= S_n\underset{(S_n)_{\Cofib (\alpha_n)}}{\sqcup}S_n
\]
where one of the morphisms $(S_{n})_{\Cofib(\alpha_n)} \ra S_n$ is determined by the map $\gamma_n:T^*S_n \ra T^*(S_n/\sY) \ra \Cofib(\alpha_n)$. Since the restriction of $\gamma$ to $T^*_{f_n}\sY$ is null-homotopic, one obtains a factorization of $y_n$ as follows
\[
    S_n \overset{g_n}{\ra} S_{n+1} \overset{y_{n+1}}{\ra} \sY.
\]
Let
\[
    \begin{tikzcd}
    S_n \ar[r,"g'_n"] \ar[d,"g_n"] & S'_{n} \ar[ld,"g''_n"] \\
    S_{n+1}
    \end{tikzcd}
\]
be the diagram where $S'_n$ is the square-zero extension of $S_n$ by $T^*(S_n/S_{n+1})$, and the existence of the factorization of $g_n$ follows from the same argument as before. The canonical maps
\[
    g^*_n\sO_{S_{n+1}} \ra (g'_n)^*\sO_{S'_{n}} \ra \sO_{S_n}
\]
give rise to the following fiber sequence
\[
    (g'_n)^*\Cofib((g''_n)^*\sO_{S_{n+1}} \ra \sO_{S'_n}) \overset{\epsilon_{g_n}}{\ra} \Cofib( g^*_n\sO_{S_{n+1}} \ra \sO_{S_n}) \ra \Cofib((g'_n)^*\sO_{S'_{n}} \ra \sO_{S_n})
\]
where one has 
\[
    (g'_n)^*\Cofib((g''_n)^*\sO_{S_{n+1}} \ra \sO_{S'_n}) \simeq T^*(S_n/S_{n+1}) \;\;\; \mbox{and} \;\;\; \Cofib( g^*_n\sO_{S_{n+1}} \ra \sO_{S_n}) \simeq \Cofib(\alpha_n)
\]
and $\epsilon_{g_n}$ is the morphism constructed in \cite[Construction 7.4.3.10]{HA}. Notice that $g_n$ is $1$-connective, since $\Cofib(\alpha_n)$ is $1$-connective. Thus, \cite[Theorem 7.4.3.12]{HA} gives that $\epsilon_{g_n}$ is $2$-connective. In particular, one obtains that
\[
    \tau^{\geq -1}(T^*(S_n/S_{n+1})) \overset{\simeq}{\ra} \tau^{\geq -1}(\Cofib(\alpha_n)).
\]
So the morphism
\[
    \left.\gamma_n\right|_{T^*_{f_n}\sY}: T^*_{f_n}\sY \ra T^*(S_n/\sY) \ra \Cofib(\alpha_n)
\]
is null-homotopic, since it induces the zero morphism on $H^0$. This implies that one has a factorization of $\gamma_n$ as follows
\[
    T^{*}(S_n/\sY) \ra T^*(S_n/S_{n+1}) \overset{\epsilon_{g_n}}{\ra} \Cofib(\alpha_n),
\]
which by passing to the corresponding fibers gives
\[
    g^*_nT^*(S_{n+1}/\sY) \ra \sF_n \overset{\nu}{\ra} \Fib(\epsilon_{g_n}).
\]
As we already notice, since $\epsilon_{g_n}$ is $2$-connective, one has that the morphism $\nu$ is null-homotopic. Thus, the composite
\[
    \begin{tikzcd}
    \sF_n \ar[r,"\alpha_n"] & T^*(S_n/\sY) \ar[r] 
    \ar[rd] & \Fib(\epsilon_{g_n}) \ar[d] \\
    & & T^*(S_n/S_{n+1})
    \end{tikzcd}
\]
is null-homotopic, where we notice that the composite factors through $\Fib(\epsilon_{g_n})$ by definition. Thus, one obtains a factorization of $\alpha_n$ as follows:
\[
    \sF_n \overset{\alpha'_n}{\ra} g^*_nT^*(S_{n+1}/\sY) \ra T^*(S_n/\sY).
\]
By passing to the cofibers of the above composite one obtains a fiber sequence
\[
    \Cofib(\alpha'_n) \ra \Cofib(\alpha_n) \overset{\beta}{\ra} T^*(S_n/S_{n+1})
\]
whose truncation to degrees $-1$ and above agrees with the truncation of
\[
    g^*_n T^*(S_n/\sY) \ra T^*(S_n/\sY) \ra T^*(S_n/S_{n+1})
\]
by \cite[Theorem 7.4.3.12]{HA}. In particular, one obtains that the induced morphism $H^{-1}(\beta)$ is surjective, so $\Cofib(\alpha'_n)$ is $1$-connective. 

By \cite[Lemma 18.2.5.4]{SAG} one can lift $\alpha'_n$ to a morphism
\[
    \widetilde{\alpha'_n}:\sF_{n+1} \ra T^*(S_{n+1}/\sY),
\]
where $\sF_{n+1} \in \QCoh(S_{n+1})$ is perfect of Tor-amplitude $\leq 0$
\end{proof}

The following collects some properties of Construction \ref{cons:formal-nbd-of-a-point}, most of them are proved by a careful examination of the construction.

\begin{prop}
\label{prop:properties-formal-chart}
Let $y:\Spec\,k \ra \sY$ be a morphism satisfying conditions 1-3) from Construction \ref{cons:formal-nbd-of-a-point}, then
\begin{resultlist}
    \item $U$ satisfies
        \begin{enumerate}
            \setcounter{enumi}{3}
            \item $U$ is $W^{\rm lA}_{\rm all}$-homogeneous,
            \item[$(v)_{\rm Art}$] $U$ admits a cotangent complex at every local Artinian affine scheme of finite type,
            \item $U$ is convergent;
        \end{enumerate}
    \item the canonical morphism $T^*(\Spec\,k/U) \ra T^*(\Spec\,k/\sY)$ identifies with the morphism $\alpha:\sF \ra T^*(\Spec\,k/\sY)$;
    \item the morphism $U \ra \sY$ is locally almost of finite presentation;
    \item suppose further that $\sF \simeq 0$, then $\sY$ is representable by a formal thickening of $\Spec\,k$.
\end{resultlist}
\end{prop}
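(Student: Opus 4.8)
The plan is to read all four assertions off the tower $S_0 \to S_1 \to \cdots$ of Construction \ref{cons:formal-nbd-of-a-point}, using two preliminary remarks. First, filtered colimits of prestacks are computed objectwise, so $(\colim_n S_n)(T) \simeq \colim_n S_n(T)$ for every affine scheme $T$. Second, convergent completion is inert on eventually coconnective affine schemes: if $T$ is $m_0$-truncated then $m \mapsto (\colim_n S_n)(\tau^{\leq m}T)$ is eventually constant, so $U(T) = \lim_m (\colim_n S_n)(\tau^{\leq m}T) \simeq \colim_n S_n(T)$. Thus $U$ and $\colim_n S_n$ agree on all local Artinian affine schemes and on their square-zero extensions. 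Given this, condition (vi) for $U$ is immediate from its definition as a convergent completion, and condition (iv) follows because every affine scheme is $W_{\rm all}$-homogeneous (a map out of a pushout of affine schemes into an affine scheme is computed by the pullback of rings), because in a $W^{\rm lA}_{\rm all}$-nil square (\ref{eq:P-nil-square}) all four schemes are local Artinian hence eventually coconnective, and because filtered colimits of spaces commute with the finite limits defining $P$-homogeneity.

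The heart of the matter is (b), from which the finiteness in both $(v)_{\rm Art}$ and (c) will follow, and I expect this to be the main obstacle. The idea is to track the relative cotangent complexes through the transitivity triangles in the inductive step of Construction \ref{cons:formal-nbd-of-a-point}: each stage carries $\alpha_n : \sF_n \to T^*(S_n/\sY)$ with $\sF_n$ perfect, $\Cofib(\alpha_n)$ $1$-connective, and --- by induction, since $\Cofib(\alpha_{n+1}) \simeq \Cofib(\alpha'_n)$ is a fibre of a map of connective almost perfect objects --- almost perfect; the chosen lifts satisfy $\sF_{n+1}|_{S_n} \simeq \sF_n$, hence $\sF_n|_{\Spec\,k} \simeq \sF$. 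The whole point of passing to the square-zero extension $S_n \hookrightarrow S_{n+1}$ at each stage is that it ``absorbs'' $\Cofib(\alpha_n)$, so that in the colimit $U$ the obstruction is pushed to infinity and $T^*(U/\sY)$ restricted to $\Spec\,k$ is exactly $\sF$, with the canonical comparison morphism to $T^*(\Spec\,k/\sY)$ being $\alpha$; the same bookkeeping, now using that each $T^*(S_n/\sY)$ is an extension of the perfect object $\sF_n$ by the almost perfect object $\Cofib(\alpha_n)$, shows $T^*(U/\sY)$ is almost perfect. Making this stabilization rigorous --- in particular ensuring that $U$ genuinely has a corepresentable cotangent complex over $\sY$, not merely a pro-cotangent complex --- is where the real work lies.

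Granting (b), I would finish $(v)_{\rm Art}$ and (c) as follows. For $(v)_{\rm Art}$: at a local Artinian point $\Spec\,A \to U$, which factors through some $S_n$, the functor $\Lift_{x}(U)$ on bounded quasi-coherent sheaves is a filtered colimit of the corepresentable functors $\Lift(S_m)$, and convergence together with the relevant finiteness makes its pro-corepresenting object stabilize degreewise to a genuine object of $\QCoh(\Spec\,A)^-$; feeding this and the almost perfectness of $T^*(U/\sY)$ into the transitivity triangle for $\Spec\,A \to U \to \sY$, together with the hypothesis that $\sY$ admits a cotangent complex at $\Spec\,A$, yields a cotangent complex for $U$ at $\Spec\,A$ with its base-change compatibility. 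For (c): by part (a) and the hypotheses on $\sY$ the morphism $U \to \sY$ admits deformation theory (it is convergent, infinitesimally cohesive on the relevant infinitesimal squares, and admits a pro-cotangent complex), and $T^*(U/\sY)$ is almost perfect, so Proposition \ref{prop:properties-cotangent-complex}(b) gives that $U \to \sY$ is locally almost of finite presentation.

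For (d): if $\sF \simeq 0$ one may take $\sF_n \simeq 0$ for all $n$, so $S_{n+1}$ is exactly the square-zero extension of $S_n$ by $T^*(S_n/\sY)[-1]$ along $\gamma_n$; by induction each $S_n$ is a local Artinian affine scheme of finite type, so $U$ is a formal thickening of $\Spec\,k$. On the other hand, hypothesis (2) of Construction \ref{cons:formal-nbd-of-a-point} --- that $f_{\rm dR}$ is an isomorphism --- says precisely that $\sY$ coincides with its formal completion $\sY \times_{\sY_{\rm dR}} (\Spec\,k)_{\rm dR}$ along $\Spec\,k$; since $\sY$ has deformation theory at Artinian schemes, this formal completion is reconstructed as the colimit of its infinitesimal thickenings along precisely the canonical square-zero tower above, so $y'' \colon U \to \sY$ is an equivalence and $\sY$ is a formal thickening of $\Spec\,k$.
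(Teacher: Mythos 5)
Your outline has the right overall shape, but it contains a genuine gap at precisely the point you flag yourself: the stabilization argument in (b). The paper does not re-derive these statements from the tower at all; its proof is a one-line citation to \cite[\S 18.2.5]{SAG} after observing that the arguments there go through in the present context (parts (a)--(b) are Lemma 18.2.5.6, (c) is Lemma 18.2.5.9, (d) is Lemma 18.2.5.19). What those lemmas actually supply is exactly what your sketch defers: a proof that the lifting functors of $U$ relative to $\sY$, which a priori are only \emph{pro}-corepresented by the inverse system coming from the tower $\{S_n\}$, are genuinely corepresented by an almost perfect object whose restriction to $\Spec\,k$ is $\sF$. Saying that each square-zero extension ``absorbs'' $\Cofib(\alpha_n)$ so that ``the obstruction is pushed to infinity'' is a heuristic, not an argument; the actual proof uses the connectivity estimates of \cite[Theorem 7.4.3.12]{HA} (which Construction \ref{cons:formal-nbd-of-a-point} already sets up) to show that for each fixed $m$ the truncations $\tau^{\geq -m}T^*(S_n/\sY)$ stabilize in $n$, and this is the step you would need to write out. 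Without it, neither the corepresentability required in $(v)_{\rm Art}$ nor the almost-perfectness of $T^*(U/\sY)$ needed to invoke Proposition \ref{prop:properties-cotangent-complex}(b) in part (c) is available.

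Part (d) has a second gap of the same kind: you assert that, because $\sY$ is formally complete along $y$ and has deformation theory on Artinian schemes, it ``is reconstructed as the colimit of its infinitesimal thickenings along precisely the canonical square-zero tower above.'' That is the statement to be proved, not a fact one may quote: one must show that every point of $\sY$ valued in an eventually coconnective affine scheme factors through some $S_n$, by an obstruction-theoretic induction along the square-zero filtration of the test scheme, using that $T^*(S_n/\sY)$ becomes highly connective as $n$ grows --- which again rests on the stabilization estimates you have not established. The remaining pieces of your proposal (part (a), and the deduction of $(v)_{\rm Art}$ and (c) from (b)) are correct and agree in spirit with Lurie's arguments.
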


\begin{proof}
All the arguments of \S 18.2.5 \cite{SAG} go through in our context. Specifically, we have: (a) and (b) are Lemma 18.2.5.6, (c) is Lemma 18.2.5.9, and (d) is Lemma 18.2.5.19.
\end{proof}

\begin{cor}
\label{cor:formal-charts}
Let $y:\Spec\,k \ra \sY$ denote a field point in a prestack $\sX$. Assume that
\begin{condlist}
    \item the prestack $\sY$ satisfies 
        \begin{enumerate}
            \setcounter{enumi}{3}
            \item $\sY$ is $W^{\rm lA}_{\rm all}$-homogeneous,
            \item[$(v)_{\rm Art}$] $\sY$ admits a cotangent complex at every local Artinian scheme of finite type,
            \item $\sY$ is convergent;
        \end{enumerate}
    \item $\sY$ is formally complete along $f$, i.e.\ $f_{\rm dR}:\Spec\,k_{\rm dR} \ra \sY_{\rm dR}$ is an isomorphism;
    \item one is given a morphism $\alpha:\sF \ra T^*(S/\sY)$, where $\sF$ is perfect of Tor-amplitude $\leq 0$ and $\Cofib \alpha$ is $1$-connective and almost perfect.
\end{condlist}
Then, there exists a factorization
\[
    \Spec\,k \overset{y'}{\ra} U \overset{y''}{\ra} \sY
\]
where $U \simeq \Spf(A)$ is an affine formal scheme, the morphism $y':\Spf(k) \ra \Spf(A)$ is a formal thickening, $y''$ is locally almost of finite presentation and $\alpha$ can be identified with the canonical morphism $T^*_{y'}(U/\sY) \ra T^*_y(\Spec\,k/\sY)$.
\end{cor}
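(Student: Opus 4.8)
The plan is to read the statement off from Construction~\ref{cons:formal-nbd-of-a-point} and Proposition~\ref{prop:properties-formal-chart}, the only genuinely new point being the recognition of the prestack $U$ produced there as an affine formal scheme. First I would observe that conditions 1)--3) of the corollary are \emph{verbatim} conditions 1)--3) of Construction~\ref{cons:formal-nbd-of-a-point} with $S=\Spec\,k$; applying that construction furnishes the tower $\Spec\,k=S_0\ra S_1\ra\cdots$, the compatible maps $\alpha_n:\sF_n\ra T^*(S_n/\sY)$ with $\sF_n$ perfect of Tor-amplitude $\leq 0$ and $\Cofib\,\alpha_n$ $1$-connective, and the factorization $\Spec\,k\overset{y'}{\ra}U\overset{y''}{\ra}\sY$ with $U:={^{\rm conv}\left(\colim_n S_n\right)}$. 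Proposition~\ref{prop:properties-formal-chart} then supplies everything about $U$ and $y''$ except the shape of $U$: part (a) gives that $U$ is $W^{\rm lA}_{\rm all}$-homogeneous, admits a cotangent complex at every local Artinian scheme of finite type, and is convergent; part (c) gives that $y''$ is locally almost of finite presentation; and part (b), together with the transitivity triangle for $\Spec\,k\ra U\ra\sY$, identifies the canonical comparison map $T^*_{y'}(U/\sY)\ra T^*_y(\Spec\,k/\sY)$ with $\alpha$.

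It then remains only to realize $U$ as an affine formal scheme $\Spf(A)$ with $y'$ a formal thickening. Each $S_n$ is affine, being an iterated square-zero extension of $\Spec\,k$; write $S_n=\Spec\,A_n$. Every transition map $S_n\hra S_{n+1}$ is a nilpotent closed embedding, so $\reduced{(S_n)}\simeq\Spec\,k$ for all $n$ and the maps $A_{n+1}\sra A_n$ are surjective with nilpotent kernels. Hence $A:=\lim_n A_n$ is a complete local ring with residue field $k$, and it is Noetherian because its embedding dimension $\dim_k\fm/\fm^2$ is, by part (b) of Proposition~\ref{prop:properties-formal-chart}, controlled by $H^{-1}(\Cofib\,\alpha)$, a finite-dimensional $k$-vector space since $\sF$ is perfect and $\Cofib\,\alpha$ almost perfect. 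The quotients $\{\Spec\,A_n\}_n$ are then cofinal among the nilpotent thickenings of $\Spec\,k$ exhausting $\Spf(A)$, so $\colim_n S_n$, and with it its convergent completion $U$, is identified with $\Spf(A)$, and $y'$ with the tautological thickening $\Spec\,k\ra\Spf(A)$. This last identification is precisely what is carried out inside the proof of Proposition~\ref{prop:properties-formal-chart}, following \cite[\S 18.2.5]{SAG}.

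I do not expect a genuine obstacle here: all the delicate input --- in particular the connectivity bookkeeping involving the maps $\epsilon_{g_n}$ --- has already been absorbed into Construction~\ref{cons:formal-nbd-of-a-point} and Proposition~\ref{prop:properties-formal-chart}. The one step deserving a little care is the passage from the tower $(A_n)$ to the complete local Noetherian ring $A$ and the identification $U\simeq\Spf(A)$; as sketched, Noetherianity is forced by the perfectness of $\sF$ and the almost perfectness of $\Cofib\,\alpha$, while the cofinality and convergence assertions are standard (and recorded in \cite[\S 18.2.5]{SAG}).
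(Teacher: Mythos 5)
Your proposal is correct and matches the paper's (implicit) intent exactly: the corollary is stated without proof precisely because it is a repackaging of Construction \ref{cons:formal-nbd-of-a-point} together with Proposition \ref{prop:properties-formal-chart}(a)--(c), with the identification $U\simeq\Spf(A)$ and the formal-thickening claim imported from \cite[\S 18.2.5]{SAG}. Your added sketch of that last identification --- affineness of each $S_n$, surjectivity of the transition maps, and Noetherianity of $A=\lim_n A_n$ via finiteness of $\fm/\fm^2$ coming from $H^{-1}(\Cofib\,\alpha)$ being a finite-dimensional $k$-vector space --- is exactly the right supplement and is the same argument carried out in \emph{loc.\ cit.}
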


\subsection{Approximate charts}

In this section we go through the construction due to J.\ Lurie that enlarges the morphism from a field point to a morphism from an affine scheme that is almost formally smooth over $\sX$, i.e.\ the relative cotangent complex vanishes only in degree $-1$, instead of all negative degrees. Both the statement and proof of the following proposition are tiny variations on \cite[Proposition 18.3.1.1]{SAG}.

\begin{prop}
\label{prop:approximate-chart}
Let $p:\sX \ra S$ be a prestack, assume that 
\begin{condlist}
    \item $\sX$ satisfies the following subset of conditions from \hyperref[thm:main-result]{Main Theorem}:
        \begin{enumerate}
        \setcounter{enumi}{1}
            \item $\sX$ is locally almost of finite presentation;
            \item $\sX$ is integrable;
            \item[$(iv)_{\rm triv}$] $\sX$ is $W^{\rm lA}_{\rm triv.}$-homogeneous;
            \item[$(v)_{\rm ft}$] for every $(T \overset{x}{\ra} \sX) \in (\Schaffconvft)_{/\sX}$, $\sX$ admits a cotangent complex at $x$;
            \item[(vi)] $\sX$ is convergent;
        \end{enumerate}
    \item $S$ is Grothendieck affine scheme.
\end{condlist}
Consider a field point $x:\Spec\,k \ra \sX$ such that $k$ is a finitely generated field extension of some residue field of $R$. Then there exists a factorization
\[
    \Spec\,k \overset{x_B}{\ra} \Spec\,B \ra \sX
\]
where 
\begin{itemize}
    \item $\Spec\,B \ra S$ is almost of finite presentation;
    \item $H^{-1}(T^*_{x_B}(\Spec\,B/\sX)) = 0$.
\end{itemize}
\end{prop}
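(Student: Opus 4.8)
The plan is to follow the strategy of \cite[Proposition 18.3.1.1]{SAG}: construct a \emph{formal} chart near $x$ by means of Construction \ref{cons:formal-nbd-of-a-point} and Corollary \ref{cor:formal-charts}, then algebraize it, using integrability to upgrade the formal datum to a morphism out of a complete local ring and condition (ii) to descend from there to a finitely presented $R$-algebra.

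First I would observe that $\sX$ has a cotangent complex at $x$: since $k$ is a finitely generated extension of a residue field of $R$, the point $(\Spec\,k \overset{x}{\ra} \sX)$ lies in $(\Schaffconvft)_{/\sX}$, so condition $(v)_{\rm ft}$ applies, and together with condition (ii) (via the reasoning behind Proposition \ref{prop:properties-cotangent-complex}) this makes $T^*_x(\sX/S)$ almost perfect over $k$; in particular $H^0$ and $H^{-1}$ are finite dimensional. I would then reformulate the goal: if $\Spec\,B$ is smooth over $S$ with a $k$-point $x_B$, the transitivity triangle for $\Spec\,k \ra \Spec\,B \ra \sX$ identifies $H^{-1}(T^*_{x_B}(\Spec\,B/\sX))$ with the kernel of the induced map of cotangent spaces $H^0(T^*_x(\sX/S)) \ra \Omega_{B/S}\otimes_B k$. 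So it is enough to produce such a $\Spec\,B$ together with a map $\Spec\,B \ra \sX$ over $S$ restricting to $x$ and inducing an injection on $H^0$ of cotangent spaces.

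Now the formal chart. Let $\sX^{\wedge}_x := \sX \times_{\dR{\sX}} \dR{\Spec\,k}$ be the formal completion of $\sX$ along $x$. It is formally complete along $x$, and every thickening occurring in Construction \ref{cons:formal-nbd-of-a-point} is a (derived) square-zero extension of the fixed point $\Spec\,k$, hence induces an isomorphism on residue fields; using this one checks that $\sX^{\wedge}_x$ inherits from $\sX$ exactly the hypotheses needed in Corollary \ref{cor:formal-charts}, namely $W^{\rm lA}_{\rm triv.}$-homogeneity from $(iv)_{\rm triv}$ (which is therefore sufficient here, so that one does not need to invoke \'etale descent through Lemma \ref{lem:descent-implies-formally-cohesive}), a cotangent complex at local Artinian schemes of finite type from $(v)_{\rm ft}$, and convergence from (vi). Using almost perfectness of $T^*_x(\sX/S)$ I would then choose $\alpha:\sF \ra T^*(\Spec\,k/\sX^{\wedge}_x)\simeq T^*_x(\sX/S)$ with $\sF$ a finite free module placed in cohomological degrees $0$ and $-1$, arranged so that $\Cofib\,\alpha$ is $1$-connective and almost perfect and so that the degree-$0$ summand of $\sF$ carries $H^0(T^*_x(\sX/S))$ faithfully. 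Corollary \ref{cor:formal-charts} then produces a factorization
\[
    \Spec\,k \overset{y'}{\ra} U \simeq \Spf(A) \overset{y''}{\ra} \sX
\]
with $y'$ a formal thickening, $y''$ locally almost of finite presentation, $A$ a complete Noetherian local ring with residue field $k$, and with the canonical morphism $T^*_{y'}(U/\sX) \ra T^*_x(\sX/S)$ identified with $\alpha$ (Proposition \ref{prop:properties-formal-chart}); tracking the transitivity triangles then shows that the formal chart $U$ realizes $H^0(T^*_x(\sX/S))$ injectively inside its cotangent space.

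Finally I would algebraize and spread out. The morphism $y'':\Spf(A) \ra \sX$ amounts to a compatible family $\{\Spec(A/\fm^n)\ra\sX\}_{n\geq1}$, so condition (iii) upgrades it, via $\sX(\Spec\,A)\overset{\simeq}{\ra}\lim_n\sX(\Spec(A/\fm^n))$, to a morphism $\Spec\,A \ra \sX$. Since $R$ is Grothendieck, an approximation argument (in the vein of Artin approximation / Popescu's theorem) writes $A$ as a filtered colimit of smooth $R$-algebras, so by condition (ii) the morphism $\Spec\,A \ra \sX$ factors through $\Spec\,B \ra \sX$ for an $R$-algebra $B$ of finite presentation, which after adjoining finitely many free coordinates I may take smooth over $S$; the cotangent-space map, and its injectivity, depend only on a finite truncation and so are retained. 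As $\Spec\,B \ra S$ is then almost of finite presentation, the reformulation of the goal yields $H^{-1}(T^*_{x_B}(\Spec\,B/\sX)) = 0$. The step I expect to be the main obstacle is this last one --- extracting a genuinely finite-type chart from the formal one \emph{without} losing the vanishing of $H^{-1}(T^*(-/\sX))$ --- since it is exactly here that integrability, local almost finite presentation and the Grothendieck hypothesis on $S$ are all used at once, and it demands a careful accounting of which part of the cotangent complex controls the conclusion and a check that this part survives formal completion, algebraization and spreading out; a secondary point needing care is that $\sX^{\wedge}_x$ genuinely satisfies all the hypotheses of Construction \ref{cons:formal-nbd-of-a-point}.
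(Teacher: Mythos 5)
Your opening moves (existence and almost perfectness of $T^*_x\sX$, formal completion $\sX^{\wedge}_x$, Construction \ref{cons:formal-nbd-of-a-point} and Corollary \ref{cor:formal-charts}, algebraization of $\Spf(A)$ to $\Spec A$ via integrability and convergence) match the paper's Steps 1--3. But the reduction you set up at the start --- ``it is enough to produce $\Spec B$ \emph{smooth over} $S$ with a map to $\sX$ inducing an injection $H^0(T^*_x(\sX/S)) \to \Omega_{B/S}\otimes_B k$'' --- is a goal that cannot be met in general, and this sinks the rest of the argument. Take $S=\Spec k$, $\sX=\Spec(k[u,v]/(uv))$ and $x$ the origin: then $H^0(T^*_x\sX)=\fm_x/\fm_x^2$ is $2$-dimensional, while any map from a scheme smooth over $k$ to $\sX$ factors Zariski-locally through one of the two branches (the local ring at $x_B$ is a domain, and $u,v$ pull back to elements with product zero), so the induced map on cotangent spaces always has a kernel. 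The proposition is of course true here, with $B=k[u,v]/(uv)$ itself --- but that $B$ is not smooth over $S$. The statement only asks for $\Spec B \to S$ almost of finite presentation, and the paper's final chart $\Spec C_i$ is indeed a (derived, $1$-truncated) finite-type scheme that is in general far from smooth over $S$.

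The mechanism by which the false smoothness claim enters your argument is a misapplication of Popescu's theorem: you propose to write $A$ as a filtered colimit of smooth $R$-algebras, but Proposition \ref{prop:derived-Popescu} requires the morphism to be geometrically regular, and $R \to A$ is not even flat whenever $\sX$ is singular over $S$ at $x$ (the formal chart is built to be approximately smooth over $\sX$, not over $S$). The paper instead passes to the classical truncation $\Spec H^0(A)$, chooses by classical Noetherian approximation a finite-type subalgebra $A'\subset H^0(A)$ with $\hat{A'_{\fp}}\twoheadrightarrow H^0(A)$, and applies Popescu to the completion map $A'_{\fp}\to \hat{A'_{\fp}}$ of an excellent local ring --- which \emph{is} geometrically regular. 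One then needs derived Noetherian approximation to realize $\Spec H^0(A)\to\Spec\hat{A'_{\fp}}$ at a finite stage $\Spec C_{i}$, and a separate chain of transitivity fiber sequences (the paper's Step 6) to check that $H^{-1}(T^*_{x_{C_i}}(\Spec C_i/\sX))$ still vanishes; this last verification is the real content of the proof and is not replaced by your injectivity-on-$H^0$ bookkeeping, which presupposes the unattainable smoothness over $S$.
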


\begin{proof}

\textit{Step 1 (formal completion at the point):}
The composite $\Spec\,k \ra \sX \ra S$ gives us the fiber sequence
\[
    T^*_{x}\sX \ra T^*(\Spec\, k /S) \ra T^*(\Spec\, k / \sX).
\]
By Proposition \ref{prop:properties-cotangent-complex} $T^*(\Spec k /S)$ is almost perfect and $T^*_{x}\sX$ is almost perfect by assumption, so we have that $T^*(\Spec k / \sX)$ is almost perfect.

Consider the formal completion of $x:\Spec\, k \ra \sX$
\[
    \sX^{\wedge}_x := \sX \underset{\dR{\sX}}{\times}\dR{\Spec\, k}
\]
where $\dR{\sX}$ denotes the de Rham prestack associated to $\sX$ (see \cite[Chapter 4, \S 1.1.1]{GRII}). Recall that for any prestack $\sZ$ its de Rham prestack $\dR{\sZ}$ is convergent, infinitesimally cohesive and admits a cotangent complex which is the zero object. Thus, we have
\[
    T^*(\sX^{\wedge}_x/\sX) \simeq p^*_1(T^*(\dR{\Spec\,k}/\dR{\sX})) \simeq 0
\]
where $p_1:\sX^{\wedge}_x \ra \dR{\Spec\,k}$ is the canonical projection. So the composite $\Spec\,k\overset{\hat{x}}{\ra} \sX^{\wedge}_{x} \ra \sX$ gives the sequence
\[
    T^*_{x}\sX \overset{\simeq}{\ra} T^*_{\hat{x}}(\sX^{\wedge}_{x}) \ra T^*(\sX^{\wedge}_x/\sX),    
\]
from where we conclude that $T^*_{\hat{x}}(\sX^{\wedge}_{x})$ exists and is isomorphic to $T^*_{x}\sX$. Thus, $T^*(\Spec\,k/\sX^{\wedge}_{x})$ also exists. 

\textit{Step 2 (formal chart):} 
Now we pick $\alpha:\sF \ra T^*(\Spec\,k/\sX^{\wedge}_{x})$ where $\sF$ is perfect and has Tor-amplitude $\leq 1$ and such that $\Cofib(\alpha)$ is $1$-connective\footnote{Notice $T^*(\Spec\,k/\sX^{\wedge}_{x})$ is perfect to order $0$, since it is almost perfect, so by \cite[Corollary 2.7.2.2]{SAG} one can always find $\sF$ and $\alpha$ as claimed. Actually, notice that in the statement of \cite[Corollary 2.7.2.2]{SAG} one can actually find $P \ra M$ with fiber $(n+1)$-connective, instead of just $n$-connective, with the proof given in \emph{loc.\ cit.}.}, i.e.\ $\Cofib(\alpha) \in \Mod^{\leq -1}_k$. Then, Corollary \ref{cor:formal-charts} gives a factorization
\[
    \Spec\,k \overset{x'}{\ra} \Spf(A) \overset{x''}{\ra} \sX.
\]

\textit{Step 3 (affine chart):}
Notice that since $\Spf(A)^{\rm red} \simeq \Spec\,k$, the ring $A$ is Noetherian and one has an ideal $I\subset A$ such that $A/I \simeq k$. Since, $\Spf(A) \simeq \colim_{n\geq 0}\Spec(A/I^n)$ as functor of points, and $\sX$ is convergent and integral one has
\[
    \Maps(\Spec\,A,\sX) \overset{\simeq}{\ra} \Maps(\Spf(A),\sX) \overset{\simeq}{\ra} \lim_{n\geq 0}\sX(\Spec(A/I^n)).
\]
Thus, one has a factorization
\[
    \Spec\,k \overset{x_A}{\ra} \Spec\,A \overset{\bar{x}}{\ra} \sX.
\]
Notice that by Proposition \ref{prop:properties-formal-chart} (2) the associated fiber sequence
\[
    x^*_a T^*(\Spec\, A /\sX) \ra T^*(\Spec k/\sX) \ra T^*(\Spec k/\Spec\, A)
\]
has first term perfect of Tor-amplitude $\leq 0$ and last term $1$-connective and almost perfect. In particular, one has that 
\[
    H^i(x^*_a T^*(\Spec\, A /\sX)) = 0
\]
for $i < 0$, since $k$ is a field.

We would be done if $\Spec\,A \ra S$ was almost of finite presentation, however, this need not be the case.

\textit{Step 4 (restriction to a classical approimate chart):}
The composite $\Spec\,k \overset{x_{H^0(A)}}{\ra} \classical{\Spec(A)} \overset{\imath_{A}}{\ra} \Spec(A) \ra \sX$, where we notice that $\classical{\Spec(A)}= \Spec(H^0(A))$, gives a fiber sequence
\[
    \imath^*_A T^*_{x_A}(\Spec\, A /\sX) \ra T^*_{x_{H^0(A)}}(\Spec\, H^0(A)/\sX) \ra T^*(\Spec\, H^0(A)/\Spec\, A),
\]
where each relative cotangent complex exists from our hypothesis. Since $\Spec\, H^0(A) \ra \Spec\, A$ induces an isomorphism on the classical level, $T^*(\Spec\, H^0(A)/\Spec\, A)$ is $2$-connective, so 
\[
    \tau^{-1}(\imath^*_A T^*(\Spec\, A /\sX)) \overset{\simeq}{\ra} \tau^{\geq -1}(T^*(\Spec\, H^0(A)/\sX)).
\]
In particular, $H^-1(T^*_{x_{H^0(A)}}(\Spec\, H^0(A)/\sX)) = 0$.

\textit{Step 5 (making the chart almost of finite type):} This is arguably the hardest step of the proof.

\textit{Step 5 (i) (approximate the classical truncation by a completion of a finite type scheme):}

Let $\fm \subset H^0(A)$ denote the maximal ideal, such that $k \simeq H^0(A)/\fm$. By classical Noetherian approximation we can find $A' \subset H^0(A)$ a finitely generated $H^0(R)$-subalgebra such that
\begin{itemize}
    \item the fraction field of $A'/\fm \cap A'$ is isomorphic to $k$;
    \item $A'$ has a basis for the vector space $\fm/\fm^2$.
\end{itemize}

Let $\fp:= \fm \cap A'$ and consider the localization $A'_{\fp}$. The morphism $A' \ra H^0(A)$ naturally factors as
\[
    A' \ra A'_{\fp} \overset{v}{\ra} H^0(A),
\]
where $v$ is surjective and induces a surjection on $H^0$ of the cotangent complexes. Since $H^0(A)$ is complete with respect to $\fm$ the morphism $v$ extends to a \emph{surjective} $\hat{v}:\hat{A'_{\fp}} \ra H^0(A)$, where $\hat{A'_{\fp}}$ denotes the completion with respect to $\fp$. In other words the morphism $\Spec H^0(A) \ra \Spec(\hat{A'_{\fp}})$ is a closed embedding, hence it is almost of finite presentation.

\textit{Step 5 (ii) (Popescu's theorem to approximate the completion):}
Thus, one has a diagram
\[
    \begin{tikzcd}
    \Spec\, H^0 (A) \ar[r,"\hat{v}"] & \Spec(\hat{A'_{\fp}}) \ar[r,"v'"] & \Spec\, A'_{\fp} \ar[r] & \Spec\,A' \ar[d] \\
    & & & H^0(R)
    \end{tikzcd}
\]
where $v'$ is geometrically regular, since $A'$ is a Grothendieck ring as a finitely generated algebra over an excellent ring $H^0(R)$. Thus, one can apply Theorem \ref{prop:derived-Popescu} to obtain 
\[
    \Spec(\hat{A'_{\fp}}) \overset{\simeq}{\ra} \lim_{I}\Spec\,B_i
\]
where $\Spec\,B_i \ra \Spec\,A'$ is smooth.

\textit{Step 5 (iii) (classical truncation is determined at finite level):}
Since $\Spec\, H^0(A) \ra \Spec(\hat{A'_{\fp}})$ is almost of finite presentation, by Theorem 4.4.2.2 and Proposition 4.6.1.1 \cite{SAG} one can find $i_0 \in I$ and $h:\Spec\, C_{i_0} \ra \Spec\, B_{i_0}$, such that $\Spec\, C_{i_0}$ is $1$-truncated and $h$ is of finite generation to order $2$, and we have
\[
    \tau^{\leq 1}(\Spec(\hat{A'_{\fp}}) \underset{\Spec\, B_{i_0}}{\times} \Spec\, C_{i_0}) \simeq \Spec\, H^0(A).
\]
This is an analogue of Noetherian approximation, but it needs a bit more of theory in the derived setting since many of the data for derived rings involves infinitely many coherence (see the introduction of \cite[\S 4.4]{SAG} for a discussion). For $i \geq i_0$ we define $\Spec\,C_i := \tau^{\leq 1}(\Spec\, B_i \underset{\Spec\, B_{i_0}}{\times} \Spec\, C_{i_0})$, such that we canonically have
\[
    \Spec\, H^0(A) \simeq \tau^{\leq 1}(\Spec(\hat{A'_{\fp}}) \underset{\Spec\, B_{i_0}}{\times} \Spec\, C_{i_0}) \simeq \tau^{\leq 1}(\lim_{I^{\rm op}_{\geq i_0}} \Spec\, B_i \underset{\Spec\, B_{i_0} }{\times} \Spec\, C_{i_0} )  \simeq \lim_{I^{\rm op}_{\geq i_0}}\Spec\, C_i.
\]
Since $\sX$ is assumed to be locally almost of finite presentation the morphism $\Spec H^0(A) \ra \sX$ is determined by
\[
    \Spec\, H^0(A) \ra \Spec\, C_{i} \overset{x'_{C_i}}{\ra} \sX
\]
for some $i\in I$. We claim that this didn't spoil the vanishing of the cotangent complex in degree $1$, i.e.\ 
\begin{equation}
    \label{eq:vanishing-of-cotangent-complex-of-approximate-chart}
    H^{-1}(T^*_{x_{C_i}}(\Spec\, C_i / \sX)) = 0,
\end{equation}
where we denote $x_{C_i}:\Spec k \ra \Spec\,C_i$

\textit{Step 6 (vanishing of cotangent complex argument):} 

The composite $\Spec H^0(A) \ra \Spec C_{i} \ra \sX$ gives a fiber sequence
\[
    T^*_{x_{C_i}}(\Spec\, C_i / \sX) \ra T^*_{x_{H^0(A)}}(\Spec\, H^0(A) / \sX) \ra T^*_{x_{H^0(A)}}(\Spec\, H^0(A) / \Spec\, C_i),
\]
since $H^{-1}(T^*_{x_{H^0(A)}}(\Spec\, H^0(A) / \sX)) = 0$, it is enough to check that $H^{-2}(T^*_{x_{H^0(A)}}(\Spec\, H^0(A) / \Spec\,C_i))$ vanishes. Consider the pullback
\begin{equation}
    \label{eq:pullback-defining-C'_i}
    \begin{tikzcd}
    \Spec\,C'_i \ar[r] \ar[d] & \Spec\, \hat{A'_{\fp}} \ar[d] \\
    \Spec\, C_i \ar[r] & \Spec\, B_i
    \end{tikzcd}
\end{equation}
defining $\Spec\,C'_i$. Notice that $\tau^{\geq 1}(\Spec\,C'_i) \simeq \Spec\,H^0(A)$ by construction. Thus, the composite $\Spec\,H^0(A) \ra \Spec\,C'_i \ra \Spec\,C_i$ gives a short-exact sequence
\[
    H^{-2}(T^*_{x_{C'_i}}(\Spec\, C'_i / \Spec\,C_i)) \ra H^{-2}(T^*_{x_{H^0(A)}}(\Spec\, H^0(A) / \Spec\,C_i)) \ra H^{-2}(T^*_{x_{H^0(A)}}(\Spec\, H^0(A) / \Spec\,C'_i)),
\]
where the third terms vanishes by the connectivity of $\Spec\,H^0(A) \ra \Spec\,C'_I$, so it is enough to check that the first term vanishes. From (\ref{eq:pullback-defining-C'_i}) one has $T^*_{x_{C'_i}}(\Spec\, C'_i / \Spec\,C_i) \simeq T^*_{x_{\hat{A'_{\fp}}}}(\Spec\,\hat{A'_{\fp}}/\Spec\,B_i)$ so it is enough to check that $H^{-2}(T^*_{x_{\hat{A'_{\fp}}}}(\Spec\,\hat{A'_{\fp}}/\Spec\,B_i))$ vanishes. The composite $\Spec\,\hat{A'_{\fp}} \ra \Spec\,B_i \ra \Spec\,A'$ gives the short exact sequence
\[
    H^{-2}(T^*_{x_{\hat{A'_{\fp}}}}(\Spec\,\hat{A'_{\fp}}/\Spec\,A')) \ra H^{-2}(T^*_{x_{\hat{A'_{\fp}}}}(\Spec\,\hat{A'_{\fp}}/\Spec\,B_i)) \ra H^{-1}(T^*_{x_{B_i}}(\Spec\,B_i/\Spec\,A')),
\]
and since one has a pullback diagram
\[
    \begin{tikzcd}
    \Spec\,k \ar[r,"x_{\hat{A'_{\fp}}}"] \ar[d] & \Spec\,\hat{A'_{\fp}} \ar[d] \\
    \Spec\,k \ar[r,"x_{A'}"] & \Spec\,A'
    \end{tikzcd}
\]
one has $T^*_{x_{\hat{A'_{\fp}}}}(\Spec\,\hat{A'_{\fp}}/\Spec\,A') \simeq T^*(\Spec\,k / \Spec\,k) \simeq 0$ and we are left with showing that $H^{-1}(T^*_{x_{B_i}}(\Spec\,B_i/\Spec\,A'))$ vanishes.

Consider $D$ defined by the pullback
\[
    \begin{tikzcd}
    \Spec\,D \ar[r] \ar[d] & \Spec\,B_i \ar[d] \\
    \Spec\,k \ar[r] & \Spec\,A'
    \end{tikzcd}
\]
Notice that $\Spec\,D \ra \Spec\,k$ is smooth and has a base point $x_D:\Spec\,k \ra \Spec\,D$. Moreover, the defining pullback diagram gives that 
\[
    H^{-1}(T^*_{x_{B_i}}(\Spec\,B_i/\Spec\,A')) \simeq H^{-1}(T^*_{x_D}(\Spec\,D /\Spec\,k)).
\]
However,
\[
    \left(H^{-1}(T^*_{x_D}(\Spec\,D /\Spec\,k))\right)^{\vee} \simeq \Hom_{\QCoh(\Spec\,D)}(H^{-1}(T^*_{x_D}(\Spec\,D /\Spec\,k)),(x_{D})_* k)
\]
is by definition the connected components of the space of lifts
\begin{equation}
    \label{eq:space-of-lifts-of-D}
    \begin{tikzcd}
    \Spec\,D \ar[r,"p"] \ar[d] & \Spec\,k \\
    \Spec\,D'\ar[ru,dashed,"p'"] &
    \end{tikzcd}
\end{equation}
where $\Spec\,D'$ is the square-zero extension of $\Spec\,D$ determined by $(x_{D})_* k$. Given any lift $p'$ the diagram
\[
    \begin{tikzcd}
    \Spec\,D \ar[r,"\id"] \ar[d] & \Spec\,D \ar[d,"p"] \\
    \Spec\,D'\ar[r,"p'"] \ar[ru,dashed] & \Spec\,k
    \end{tikzcd}
\]
has a lift, since $p$ is smooth, i.e.\ the set of isomorphism classes of $p'$ is trivial, that is the set of connected components of the space (\ref{eq:space-of-lifts-of-D}) is trivial. Thus, our cohomology group vanishes and the proof is finished.
\end{proof}

\subsection{Smooth charts}

The great think about approximate charts as the ones obtained in Proposition \ref{prop:approximate-chart} is that one can refine them to actually (formally) smooth charts.

\begin{prop}
\label{prop:smooth-chart}
Let $p:\sX \ra S$ be a prestack, assume that 
\begin{condlist}
    \item $p$ satisfies conditions (ii), $(iv)_{\rm triv}$, $(v)_{\rm ft}$ and (vi) from Theorem \ref{thm:main-result};
    \item we are given a point $x_B:\Spec\,B \ra \sX$, such that for some prime ideal $\fp \subset B$ the restriction $\Spec\,\kappa(\fp) \overset{x_{\kappa(\fp)}}{\ra} \Spec\, B \ra \sX$ to the residue field at $\fp$ satisfies $H^{-1}(T^*_{x_{\kappa(\fp)}}(\Spec\,B/\sX)) = 0$.
\end{condlist}
Then one can find a diagram
\[
    \begin{tikzcd}
    \Spec\,B' \ar[r,dashed] \ar[d] & \Spec\,C \ar[d,"x_{C}"] \\
    \Spec\,B \ar[r,"x_{B}"] & \sX
    \end{tikzcd}
\]
such that 
\begin{resultlist}
    \item $\Spec\,B' \ra \Spec\,B$ is Zariski neighborhood of $\Spec\,\kappa(\fp)$;
    \item $\classical{(\Spec\,B')} \overset{\simeq}{\ra} \classical{(\Spec\,C)}$;
    \item $\Spec\, C \ra \sX$ is formally smooth.
    \item if $\Spec\, B \ra S$ is almost of finite presentation, $\Spec\, C \ra S$ is also almost of finite presentation.
\end{resultlist}
\end{prop}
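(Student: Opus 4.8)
The plan is to run the (higher-connective) analogue of Construction \ref{cons:formal-nbd-of-a-point} --- equivalently of Corollary \ref{cor:formal-charts} --- on a distinguished affine open of $\Spec\,B$ around $\fp$ rather than on a field point. Starting from the restriction $x_0 := x_B|_{\Spec\,B'}$ to such an open $\Spec\,B' \ni \fp$, I would build a tower of square-zero thickenings $\Spec\,B' = S_0 \hookrightarrow S_1 \hookrightarrow \cdots$ over $\sX$, with structure maps $x_n : S_n \to \sX$, so that at the $n$-th stage the lowest surviving negative cohomology sheaf of the relative cotangent complex $T^*(S_n/\sX)$ is killed; the hypothesis $H^{-1}(T^*_{x_{\kappa(\fp)}}(\Spec\,B/\sX)) = 0$ ensures that all the ideals involved are $1$-connective, so $H^0$ is never touched. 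Then I set $\Spec\,C := {}^{\rm conv}(\colim_n S_n)$, let $\Spec\,B' \to \Spec\,C$ be the canonical map out of $S_0$, and let $x_C := \colim_n x_n$; by construction $\Spec\,B' \to \Spec\,C \overset{x_C}{\ra} \sX$ agrees with $\Spec\,B' \to \Spec\,B \overset{x_B}{\ra} \sX$.

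\textbf{Running the construction.}
By Lemma \ref{lem:cotangent-complex-from-ft-points} (using (ii), $(v)_{\rm ft}$ and (vi)) $\sX$ admits a cotangent complex at every affine point, so $T^*(\Spec\,B/\sX)$ is defined, and combining Proposition \ref{prop:properties-cotangent-complex} for $\sX \to S$ with the transitivity triangle for $\Spec\,B \to \sX \to S$ one gets that $\tau^{\geq -1}T^*(\Spec\,B/\sX)$ is perfect Zariski-locally around $\fp$ --- certainly so when $\Spec\,B \to S$ is almost of finite presentation, and with some extra care in general. Using that $H^{-1}$ of its fibre at $\fp$ vanishes, after passing to a distinguished open $\Spec\,B' = \Spec\,B[1/g] \ni \fp$ one can choose $\alpha_0 : \sF_0 \to T^*(S_0/\sX)$ with $\sF_0$ perfect of Tor-amplitude $\leq 0$ and $\Cofib(\alpha_0)$ $2$-connective, exactly as in Step 2 of the proof of Proposition \ref{prop:approximate-chart} (via \cite[Corollary 2.7.2.2]{SAG}); this already gives (a). For the inductive step, from $\alpha_n : \sF_n \to T^*(S_n/\sX)$ of this shape I take $S_{n+1}$ to be the square-zero extension of $S_n$ by $\Cofib(\alpha_n)[-1]$ and produce $\alpha_{n+1}$ of the same shape, following Construction \ref{cons:formal-nbd-of-a-point} verbatim. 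The one point requiring care is that $x_n : S_n \to \sX$ lifts to $S_{n+1}$: this square-zero extension is by a $1$-connective ideal, so it involves only trivial residue-field extensions, and therefore --- as in the proof of Lemma \ref{lem:H2-for-Henselian-schemes} --- I would solve the lifting problem formally along $\fp$, where $\sX$ being $W^{\rm lA}_{\rm triv.}$-homogeneous and admitting a cotangent complex at the local Artinian points in play makes the obstruction vanish by the choice of $\alpha_n$, and then spread the solution to a smaller distinguished open using that $\sX$ is locally almost of finite presentation, shrinking $\Spec\,B'$ accordingly. Since every ideal $\Cofib(\alpha_n)[-1]$ is $1$-connective, each $S_n \hookrightarrow S_{n+1}$ is an isomorphism on classical truncations, hence $\classical{(\Spec\,C)} \simeq \classical{(S_0)} \simeq \classical{(\Spec\,B')}$, which is (b).

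\textbf{Conclusion.}
The analysis of \cite[\S 18.2.5]{SAG} underlying Proposition \ref{prop:properties-formal-chart} then applies unchanged and yields: $x_C : \Spec\,C \to \sX$ is locally almost of finite presentation, and, feeding this into the transitivity triangle for $\Spec\,B' \to \Spec\,C \to \sX$, the relative cotangent complex $T^*(\Spec\,C/\sX)$ is perfect and of Tor-amplitude $\leq 0$ in a neighborhood of $\fp$; after shrinking $\Spec\,B'$ to that neighborhood, the cotangent-complex criterion for formal smoothness gives (c). Finally, if $\Spec\,B \to S$ is almost of finite presentation then so is the localization $\Spec\,B' \to S$, so $\Spec\,C \to S$, being the composite of the locally almost of finite presentation morphisms $\Spec\,C \to \sX$ and $\sX \to S$ (condition (ii)) between affine schemes in the outer terms, is almost of finite presentation; this is (d).

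\textbf{Main obstacle.}
The delicate step is the inductive one: reconciling the \emph{global} square-zero extensions $S_n \hookrightarrow S_{n+1}$ with the weak hypothesis $(iv)_{\rm triv}$. One cannot lift $x_n$ all at once, and must instead lift it formally at $\fp$ --- the only locus where $W^{\rm lA}_{\rm triv.}$-homogeneity is directly applicable --- and then use local almost finite presentation to propagate the lift to a Zariski neighborhood, re-shrinking $\Spec\,B'$ at every stage. Checking that this sequence of shrinkings is harmless (one still has a distinguished open containing $\fp$), and --- in the generality where $\Spec\,B \to S$ need not be almost of finite presentation --- that $T^*(\Spec\,B/\sX)$ stays perfect enough near $\fp$ for the approximations $\sF_n$ to exist, are the crux; both run entirely parallel to the corresponding points in the proofs of Proposition \ref{prop:approximate-chart} and Lemma \ref{lem:H2-for-Henselian-schemes}.
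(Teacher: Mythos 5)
The paper does not actually prove this proposition: its entire proof is the citation ``This is \cite[Lemma 7.2.1]{DAG}.'' Your reconstruction follows, in outline, exactly the argument Lurie gives there --- localize around $\fp$, iteratively kill the negative cohomology of $T^*(\Spec\,B'/\sX)$ by square-zero extensions $S_0 \hra S_1 \hra \cdots$, and take $\Spec\,C$ to be the (convergent completion of the) colimit --- and your treatment of (a), (b) and (d) is fine. So as a strategy this is the right proof of the right lemma.

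However, there is a genuine gap at the step you yourself flag as the crux, and the workaround you propose does not close it. To extend $x_n:S_n \ra \sX$ along the square-zero extension $S_n \hra S_{n+1}$ one needs $\sX$ to send the defining pushout $S_{n+1} \simeq S_n \sqcup_{(S_n)_{\Cofib(\alpha_n)}} S_n$ to a pullback (so that the lift is controlled by a null-homotopy of $T^*_{x_n}\sX \ra T^*S_n \ra \Cofib(\alpha_n)$, which your choice of $\alpha_n$ then kills). This is an instance of infinitesimal cohesiveness for a square-zero extension of the \emph{whole affine} $\Spec\,B'$; condition $(iv)_{\rm triv}$ only concerns pushouts of local Artinian schemes of finite type with trivial residue-field extension, so it says nothing about these squares --- indeed $\kappa(\fp)$ for a general prime $\fp \subset B$ need not even be of finite type over $S$. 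Your patch, ``solve the lifting problem formally along $\fp$ and then spread it to a distinguished open using local almost finite presentation,'' fails for a concrete reason: lafp converts \emph{cofiltered limits of finitely presented affines} into filtered colimits of spaces, and $\Spec\,B'$ is not a cofiltered limit of the Artinian thickenings of $\fp$ --- the passage from a formal solution at a point to a solution on an honest neighborhood is exactly what requires integrability together with Popescu/Artin approximation in Steps~8(i)--(iii) of Theorem \ref{thm:etale-surjection}, and even there one only obtains \'etale (not Zariski) neighborhoods of suitable closed points. This mismatch is not your invention: \cite[Lemma 7.2.1]{DAG} assumes (infinitesimal) cohesiveness for arbitrary nilpotent/square-zero extensions of affines precisely to make this lifting step work, and in both places the present paper invokes Proposition \ref{prop:smooth-chart} a stronger homogeneity statement is in play. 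As written, though, your inductive step is not justified by the hypotheses you allow yourself, so the proof is incomplete.
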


\begin{proof}
This is \cite[Lemma 7.2.1]{DAG}.
\end{proof}

\subsection{Desingularization}

There are a couple of places where one needs a crucial desingularization result. 

The first is in the construction an approximate chart at a field point of $\sX$, i.e.\ an affine scheme almost of finite type over the base with vanishing relative cotangent complex in degree $-1$. A formal smooth chart can be obtained roughly by considering a formal chart at the formal completion of the point and passing to an actual neighborhood using the integrability assumption on $\sX$, the hard part is to guarantee that this is almost of finite type over the base--Popescu's theorem allows one to refine this actual chart to one almost of finite type, at the expense of only approximating the relative cotangent complex at a single degree. 

The second use is in lifting a point of our prestack $\sX$ only defined in a completion of a point to an actual \'etale neighborhood of that point\footnote{Very much the same as what Artin's approximation and some generalizations of it do in the classical case (cf.\ \cite[Theorem 1.8]{Alper-Mainz}).} (see Step 9 in proof of Proposition \ref{thm:etale-surjection}).


The following is a derived analogue of Popescu's theorem:

\begin{prop}
\label{prop:derived-Popescu}
Consider a morphism $f:T \ra S$ between Noetherian affine schemes, the following conditions are equivalent:
\begin{equivlist}
    \item there exists a filtered diagram $I$, such that $T \simeq \lim_{I^{\rm op}}T_i$ where each $T_i \ra S$ is smooth;
    \item the morphism $f$ is geometrically regular, i.e.\ $f$ is flat and $\classical{f}:\classical{T} \ra \classical{S}$ is geometrically regular in the usual sense (cf.\ \cite[Tag 0382]{stacks-project}).
\end{equivlist}
\end{prop}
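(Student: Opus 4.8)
The plan is to reduce this to the classical general Néron desingularization theorem of Popescu — a homomorphism $A_0 \ra B_0$ of Noetherian discrete rings is a filtered colimit of smooth $A_0$-algebras if and only if it is regular, i.e.\ flat with geometrically regular fibres (\cite[Tag 07GC]{stacks-project}) — and then to transport that statement across the truncation $R \ra H^0(R)$, where I write $S = \Spec\,R$ and $T = \Spec\,B$. The mechanism for the transport is the fact that the truncation functor $B' \mapsto H^0(B')$ induces an equivalence between the $\infty$-category of flat connective $R$-algebras and the ordinary category of flat $H^0(R)$-algebras (see \cite{HA}, \cite{SAG}); this equivalence preserves filtered colimits, since filtered colimits of flat modules are flat and $H^0$ commutes with filtered colimits, and by Lemma \ref{lem:smoothness-for-affines} — a morphism of affine schemes is smooth exactly when it is flat with smooth classical truncation — it restricts to an equivalence between smooth $R$-algebras and smooth $H^0(R)$-algebras. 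Note that since $T$ and $S$ are Noetherian, $H^0(B)$ and $H^0(R)$ are Noetherian, so Popescu's theorem is available, and the finiteness built into the word ``smooth'' causes no trouble.

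First I would treat (1) $\Rightarrow$ (2): if $B \simeq \colim_{i \in I} B_i$ with each $B_i$ smooth, hence flat, over $R$, then $B$ is flat over $R$ (a filtered colimit of flat modules is flat), and $H^0(B) \simeq \colim_i H^0(B_i)$ is a filtered colimit of smooth $H^0(R)$-algebras, hence $\classical{f}$ is geometrically regular by Popescu's theorem; together with flatness of $f$ this is exactly (2).

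For the converse (2) $\Rightarrow$ (1): flatness of $f$ means that $B$ is the flat $R$-algebra corresponding to the flat $H^0(R)$-algebra $H^0(B)$ under the equivalence above; flatness also forces $\classical{f}$ to be flat, and by hypothesis $\classical{f}$ is geometrically regular, so Popescu's theorem produces a filtered diagram $I$ together with smooth $H^0(R)$-algebras $A_i$ and an identification $H^0(B) \simeq \colim_{i \in I} A_i$. Lifting the diagram $I \ra \{\text{smooth } H^0(R)\text{-algebras}\}$ through the equivalence yields a filtered diagram $i \mapsto B_i$ of smooth $R$-algebras with $H^0(B_i) \simeq A_i$; since the equivalence preserves filtered colimits and sends $H^0(B)$ to $B$, I get $B \simeq \colim_i B_i$, i.e.\ $T \simeq \lim_{I^{\rm op}} \Spec\,B_i$ with each structure morphism $\Spec\,B_i \ra S$ smooth, as wanted.

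The step I expect to be the real obstacle is establishing the equivalence between flat $R$-algebras and flat $H^0(R)$-algebras (and its restriction to smooth algebras) compatibly with filtered colimits; once that and Popescu's classical theorem are granted, the rest is formal. If citing this equivalence is felt to be unsatisfactory, the alternative is a deformation-theoretic argument that lifts along the Postnikov tower $B' \simeq \lim_n \tau^{\geq -n} B'$ one square-zero extension at a time, using that the cotangent complex of a smooth morphism is a finite projective module concentrated in cohomological degree $0$ so that the relevant obstruction and indeterminacy groups vanish; the delicate point is then to perform these lifts functorially in the index diagram $I$ rather than just for a single algebra.
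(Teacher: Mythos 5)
The paper does not actually prove this proposition: its ``proof'' is the single citation \cite[Theorem 3.7.5]{DAG}, so you are attempting substantially more than the text does. Your direction (1) $\Rightarrow$ (2) is fine: filtered colimits of flat modules are flat, $H^0$ commutes with filtered colimits, each $H^0(B_i)$ is smooth over $H^0(R)$ by Lemma \ref{lem:smoothness-for-affines}, and the easy direction of classical Popescu then gives geometric regularity of $\classical{f}$.

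The converse, however, rests on a false transport principle. The functor $B' \mapsto H^0(B')$ does \emph{not} induce an equivalence between the $\infty$-category of flat (or smooth) connective $R$-algebras and the ordinary category of flat (or smooth) $H^0(R)$-algebras: it is not fully faithful, since for instance $\Map_{\CAlg_R}(R[x],B)\simeq \Omega^{\infty}B$ has nontrivial higher homotopy whenever $B$ does, whereas mapping sets in an ordinary category are discrete. (The genuine statement of this kind, \cite[Theorem 7.5.0.6]{HA}, holds only for \emph{\'etale} algebras.) What is true for smooth algebras is that $H^0$ is essentially surjective and bijective on $\pi_0$ of mapping spaces --- an equivalence of \emph{homotopy} categories --- by the obstruction calculus with the finite projective relative cotangent complex. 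But an equivalence of homotopy categories does not let you ``lift the diagram $I \ra \{\mbox{smooth } H^0(R)\mbox{-algebras}\}$ through the equivalence'': producing a coherent filtered diagram of smooth $R$-algebras, together with a coherent cocone lifting $\{A_i \ra H^0(B)\}$, requires controlling all higher coherences, and that is precisely the content of the derived Popescu theorem. Your closing paragraph concedes this (``the delicate point is then to perform these lifts functorially in the index diagram $I$''); that delicate point \emph{is} the theorem, and it is exactly what the citation to \cite[Theorem 3.7.5]{DAG} supplies. As written, the proposal therefore reduces the statement to an unproved --- and, in the form stated, false --- equivalence rather than proving it.
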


\begin{proof}
This is Theorem 3.7.5 in \cite{DAG}.
\end{proof}

\section{Proof of Main Theorem}

\subsection{Properties of \texorpdfstring{$n$}{n}-geometric}

The hardest part of the proof of the Main result is checking that conditions (i-vii) are sufficient for $\sX$ to be an $n$-geometric stack. In this section we check that the conditions of \hyperref{thm:main-result}{Main Theorem} are necessary.

The fact that any $n$-geometric stack is integrable is probably the hardest condition to check.

\begin{prop}
\label{prop:n-geometric-is-integrable}
Let $\sX$ be an $n$-geometric stack, then $\sX$ is integrable.
\end{prop}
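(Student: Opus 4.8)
The plan is to reduce to the affine case by induction on $n$, the geometricity level. For $n = 0$, a $0$-geometric stack $\sX$ admits an affine representable smooth atlas $f : \sU \ra \sX$ with $\sU \simeq \sqcup_I U_i$ a disjoint union of affine schemes; since the morphism is affine, for any complete Noetherian local ring $B$ the base change $\sU \times_{\sX} \Spec\,B$ is affine, and one checks integrability of $\sX$ by descent along this atlas together with the fact that affine schemes are integrable. The key input for affine schemes is the classical statement that for $A$ a finitely presented (or more generally arbitrary) ring, $\Hom(\Spec\,B, \Spec\,A) = \Hom(A, B) \overset{\simeq}{\ra} \lim_n \Hom(A, B/\fm^n) = \lim_n \Hom(\Spec(B/\fm^n), \Spec\,A)$, which is immediate from $B \overset{\simeq}{\ra} \lim_n B/\fm^n$ (classical completeness plus the convergence that lets one pass from $H^0$ to all of $B$ — here one uses that $B$ is $\fm$-adically complete in the derived sense by the footnote referencing \cite[Remark 7.3.6.7]{SAG}).

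For the inductive step, suppose every $(n-1)$-geometric stack is integrable and let $\sX$ be $n$-geometric with smooth atlas $f : \sU \ra \sX$ which is $(n-1)$-representable. Given a complete Noetherian local ring $B$ with maximal ideal $\fm$, write $B_n := B/\fm^n$, so $\Spec\,B \simeq \colim \Spec\,B_n$ as a formal object and we want $\sX(\Spec\,B) \overset{\simeq}{\ra} \lim_n \sX(\Spec\,B_n)$. The strategy is: given a compatible system of points $x_n \in \sX(\Spec\,B_n)$, first lift it through the atlas. Using smoothness of $f$ together with Lemma \ref{lem:Henselian-lift-against-smooth-geometric-stacks} (or rather a Henselian/complete-local version of lifting against smooth geometric morphisms — the pair $(B, \fm)$ is Henselian, indeed complete), one can, after an \'etale cover of each $\Spec\,B_n$ which one arranges compatibly, find lifts $\tilde{x}_n \in \sU(\Spec\,B_n)$. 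Since $\sU$ is a disjoint union of affine schemes, hence integrable, these assemble to $\tilde{x} \in \sU(\Spec\,B)$, which maps down to the desired $x \in \sX(\Spec\,B)$. Full faithfulness (uniqueness of the lift) is handled by the same argument applied to the diagonal $\sX \ra \sX \times_S \sX$, which is $(n-1)$-geometric and hence integrable by induction, so that the space of isomorphisms between two points of $\sX(\Spec\,B)$ is again a limit over the $\sX(\Spec\,B_n)$'s.

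The main obstacle I expect is the compatibility of the \'etale covers needed to lift through the atlas: Lemma \ref{lem:Henselian-lift-against-smooth-geometric-stacks} produces, for each $n$, an \'etale cover $\Spec\,R'_n \ra \Spec\,B_n$ over which a lift exists, but a priori these need not form a compatible tower, and $\sX(\Spec\,B_n)$ is only a $\lim$ of spaces, not a set, so naive diagram-chasing is insufficient. The resolution is that \'etale morphisms lift uniquely along nilpotent (here $\fm$-adic) thickenings — an \'etale $R'_0 / B_0$ extends canonically to an \'etale $R'_n / B_n$ for all $n$ and then to an \'etale $R'/B$ with $R'$ again complete local Noetherian (invariance of the small \'etale site under completion, \cite[Theorem 7.5.0.6]{HA} as used in Lemma \ref{lem:descent-implies-formally-cohesive}), so one may work over $R'$ from the start; since $\sX$ is an \'etale sheaf the lifting problem over $\Spec\,B$ is solved once it is solved over the cover $\Spec\,R'$. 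A secondary technical point is that $\sX$ is only assumed to be an \'etale sheaf, not hypercomplete, but the \v{C}ech descent built into the definition together with the fact that $\Spec\,R' \ra \Spec\,B$ is a genuine (finite, even) cover suffices.

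(I would also remark that an alternative, and perhaps cleaner, route avoiding atlas-lifting is the following: by Proposition \ref{prop:n-geometric-has-deformation-theory}, an $n$-geometric stack admits corepresentable deformation theory, in particular satisfies $(iii)_{\rm c\ell}$ trivially and is infinitesimally cohesive and convergent; then one could in principle deduce $(iii)$ from $(iii)_{\rm c\ell}$ by \cite[Proposition 7.1.7]{DAG} as in Remark \ref{rem:integrable-from-classical-integrable}, reducing to showing $\classical{\sX}$ is classically integrable — but this classical statement still requires lifting through the atlas, so it does not genuinely circumvent the difficulty; I would present the direct inductive argument above.)
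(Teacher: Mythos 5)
Your strategy (induction on $n$, reduction to the affine case via the atlas, and the invariance of the \'etale site under the nilpotent thickenings $\Spec(B/\fm^n) \hra \Spec B$) uses the same essential ingredients as the paper's proof, but the execution is genuinely different. The paper does not lift individual points through the atlas: it uses the equivalences $\Phi^{(n)}:\Schaff_{\rm \acute{e}t.\ in\ t} \simeq \Schaff_{\rm \acute{e}t.\ in\ t^{(n)}}$ and $\Phi^{\infty}:\Schaff_{\rm \acute{e}t.\ in\ t} \simeq \Schaff_{\rm f.\acute{e}t.\ in\ T}$ (valid because $B$ and each $B/\fm^n$ are Henselian) to transport $\sX$ and the whole \v{C}ech nerve $\sU^{\bu}$ of the atlas to sheaves on a single small \'etale site, reduces the claim to an equivalence of geometric realizations there, and then inducts term by term on $\sU^{\bu}$ (using that each $\sU^m$ is $(n-1)$-geometric) down to the affine case. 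This packaging buys two things that your point-lifting route must supply by hand. First, producing $\tilde{x} \in \lim_n \sU(\Spec B_n)$ requires more than a lift at each level: you need a coherent tower, and the Henselian lifting lemma alone does not provide compatibility; the standard repair is to lift at level $0$ and then extend step by step using that formal smoothness makes $\sU(\Spec B_{n+1}) \ra \sU(\Spec B_n)\underset{\sX(\Spec B_n)}{\times}\sX(\Spec B_{n+1})$ surjective on $\pi_0$, but this should be said. Second, after obtaining a point of $\sX(\Spec R')$ you must descend it to $\Spec B$, which requires constructing descent data over $\Spec(R'\otimes_B R')$ and its higher iterates; this works because one may take $R'$ finite \'etale over $B$ (so all terms of the \v{C}ech nerve are again products of complete Noetherian local rings and the already-established full faithfulness applies), but it is exactly the step the paper's sheaf-on-the-small-site formulation absorbs automatically. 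With those two points made explicit your argument goes through; your side remark about the diagonal handling full faithfulness matches the paper's reduction via monomorphisms of the \v{C}ech terms.
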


\begin{proof}
We pass to a more general statement and prove it by induction. 

Consider $B$ a complete local Noetherian ring $B$ with maximal ideal $\fm$ and $k := B/\fm$ residue field. Let $T = \Spec\, B$, $t^{(n)} := \Spec(B/\fm^{n+1})$ for $n\geq 1$ and $t = \Spec\,k$. Consider $\Schaff_{\rm f.\acute{e}t. in T}$ the category of finite \'etale affine schemes over $T$, because $B$ is a Henselian ring, by \cite[Proposition B.6.5.2]{SAG} one has an equivalence
\[
    \Phi^{\infty}: \Schaff_{\rm \acute{e}t. in t}  \overset{\simeq}{\ra} \Schaff_{\rm f.\acute{e}t. in T}
\]
where $\Schaff_{\rm \acute{e}t. in t}$ is the category of \'etale affine schemes over $t$. Actually by \cite[Proposition B.3.3.7]{SAG} and noticing that $(B/\fm^{n},\fm \cdot B/\fm^{n})$ is a Henselian pair for any $n \geq 2$, one has compatible equivalences
\[
    \Phi^{(n)}: \Schaff_{\rm \acute{e}t. in t} \overset{\simeq}{\ra} \Schaff_{\rm \acute{e}t. in t^{(n)}}
\]
for each $n\geq 1$. We let
\[
    (\Phi^{\infty})^*(\sX): (\Schaff_{\rm \acute{e}t. in t})^{\rm op} \ra \Spc
\]
denote the \'etale sheaf obtained by pulling back $\sX$ through the equivalence $\Phi^{\infty}$\footnote{Notice that this is not simply restricting a point $\sX(T)$ to a point $\sX(t)$, the \emph{inverse} of $\Phi^{\infty}$ is given by a fiber product, but not $\Phi$ itself.}. Simiarly, we let
\[
    (\Phi^{(n)})^*(\sX): (\Schaff_{\rm \acute{e}t. in t})^{\rm op} \ra \Spc
\]
denote the sheaf obtained with the equivalence $\Phi^{(n)}$. Because the representable sheaf $h_T$ in $\Schaff_{\rm f.\acute{e}t. in T}$ is the final object one has an equivalence
\[
    \sX(T) \simeq \Hom_{\mbox{Shv}((\Schaff_{\rm f.\acute{e}t. in T})^{\rm op})}(h_T,\sX) \simeq \Hom_{\mbox{Shv}((\Schaff_{\rm \acute{e}t. in t})^{\rm op})}(h_{t},(\Phi^{\infty})*(\sX)) \simeq (\Phi^{\infty})*(\sX)(t)
\]
and, by definition 
\[
    \Phi^{(n)})(\sX)(t) \simeq \sX(t^{(n)})
\]
for any $n\geq 1$. Thus, to check that $\sX(T) \ra \lim_{n \geq 1}\sX(t^{(n)})$ is an equivalence, it is enough to check that
\[
    (\Phi^{\infty})^*(\sX)(t) \ra \lim_{n \geq }(\Phi^{(n)})^*(\sX)(t).
\]

Let $\sU \ra \sX$ be a smooth atlas of $\sX$. Notice that we have a factorization
\[
    \left|(\Phi^{\infty})^*(\sU^{\bu})\right|_{\rm Stk} \overset{\varphi}{\ra} \left|\lim_{n \geq 1}(\Phi^{(n)})^*(\sU^{\bu})\right|_{\rm Stk} \overset{\varphi'}{\ra} \lim_{n \geq 1}(\Phi^{(n)})^*(\sX)
\]
where $\sU^{\bu}$ denotes the \v{C}ech nerve of $\sU \ra \sX$. Notice that $\lim_{n \geq 1}(\Phi^{(n)})^*(\sU^{\bu})$ is simply the \v{C}ech nerve of
\begin{equation}
    \label{eq:etale-surjection-at-level-n}
    \lim_{n \geq 1}(\Phi^{(n)})^*(\sU) \overset{v}{\ra} \lim_{n \geq 1}(\Phi^{(n)})^*(\sX),
\end{equation}
since we are only commuting limits. And that $v$ is an \'etale surjection, since given any point $\Spec\,k' \ra (\Phi^{(n)})^*(\sX)$ for some $n$, there exists an \'etale morphism $t^{(n),'} \ra t^{(n)}$ and a point $t^{(n),'} \ra \sX$, thus up to passing to an \'etale cover $t^{(n)''} \ra t^{(n)'}$ one has a section
\[
    \begin{tikzcd}
    t^{(n)''} \ar[r] \ar[d] & \sU \ar[d] \\
    t^{(n)'} \ar[r] \ar[ru,dashed] & \sX
    \end{tikzcd}
\]
Taking the pullback $t^{(n)''}\underset{t^{(n)'}}{\times}\Spec\,k' \ra t$ one has an \'etale cover of $\Spec\,k'$ where a section of $(\Phi^{(n)})^*(\sU) \ra (\Phi^{(n)})^*(\sX)$ exists. 

So we are reduced to checking that $\varphi$ is an equivalence. This would follow from each of the morphisms
\[
    (\Phi^{\infty})^{*}(\sU^m) \ra \lim_{n \geq 1}(\Phi^{(n)})^{*}(\sU^m)
\]
being an equivalence. Since each $\sU^m$ is $(n-1)$-geometric, by induction we are reduced to the case $n=0$. Now we notice that the morphism
\[
    \sU^{m}(T_0) \simeq \sU\underset{\sX}{\times}\cdots\underset{\sX}{\times}\sU  \ra \sU^{\times m}(T_0)
\]
is a monomorphism for every classical affine scheme $T_0$, so we can reduced to the case where one has a monomorphism $\classical{\sX} \ra \classical{\sZ}$ where $\sZ$ is the disjoint union of affine schemes, i.e.\ $\sX$ is $(-1)$-geometric. By again considering the \v{C}ech nerve in this situation we are reduced to the case where each $\sU^m$ is a disjoin union of affine schemes.

Now the claim is clear (cf.\ last paragraph the proof of Proposition 17.3.4.2 \cite{SAG}).
\end{proof}


Though condition (iv) is sufficient to obtain $n$-geometric stacks, we will prove that $n$-geometric actually satisfy the stronger (iv)' that is part of the deformation theory package. We start by discussion convergence.

\begin{prop}
\label{prop:n-gometric-is-convergent}
Let $\sX$ be an $n$-geometric stack, then $\sX$ is convergent.
\end{prop}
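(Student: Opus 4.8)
The plan is to prove the statement by induction on the geometricity level $n$, following the same pattern as the proof of Proposition~\ref{prop:n-geometric-is-integrable}.

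\emph{Base case.} An affine scheme $\Spec\,A$ is convergent: the functor $\Map_{\CAlg}(A,-)$ is corepresentable and hence preserves all limits, while every connective ring $B$ is the limit of its Postnikov tower, $B\overset{\simeq}{\ra}\lim_m\tau^{\leq m}B$; combining these gives $(\Spec\,A)(\Spec\,B)\simeq\lim_m(\Spec\,A)(\tau^{\leq m}\Spec\,B)$. A disjoint union $\sqcup_I U_i$ of affine schemes is then convergent as well, since $\tau^{\leq m}$ does not alter $\pi_0$ of the structure ring and hence does not alter the relevant clopen decompositions, so the claim for $\sqcup_I U_i$ reduces to the affine case. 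In particular this also handles $0$-geometric $\sX$ below as a special case of the inductive step.

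\emph{Inductive step.} Assume the result for $(n-1)$-geometric stacks and let $\sX$ be $n$-geometric with smooth $(n-1)$-representable atlas $f:\sU\ra\sX$, $\sU=\sqcup_I U_i$. Since $f$ is smooth and surjective it is an effective epimorphism of \'etale sheaves (a smooth surjection admits sections \'etale-locally), so $\sX\simeq|\sU^\bullet|$ in \'etale sheaves, where $\sU^\bullet$ is the \v{C}ech nerve of $f$. Each term $\sU^m=\sU\underset{\sX}{\times}\cdots\underset{\sX}{\times}\sU$ is $(n-1)$-geometric: writing $\sU^{\times_\sX(m+1)}$ as an iterated base change of the $(n-1)$-geometric diagonal $\Delta_{\sX/S}$ over the $0$-geometric object $\sU^{\times_S(m+1)}$ and using stability of $(n-1)$-geometricity under base change and composition, this follows by induction on $m$. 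By the inductive hypothesis each $\sU^m$ is convergent.

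\emph{From the atlas to $\sX$, and the main obstacle.} It remains to descend convergence from the $\sU^m$ to $\sX$. The route I would take is to use that the convergent-completion ${}^{\rm conv}(-)$, $\sZ\mapsto\big(S\mapsto\lim_m\sZ(\tau^{\leq m}S)\big)$, is a localization of prestacks, so that as a left adjoint it commutes with colimits; if one moreover checks that ${}^{\rm conv}(-)$ preserves \'etale sheaves, then ${}^{\rm conv}\sX\simeq|{}^{\rm conv}\sU^\bullet|\simeq|\sU^\bullet|\simeq\sX$, i.e.\ $\sX$ is convergent. Alternatively one argues concretely: for a compatible family $x_m\in\sX(\tau^{\leq m}S)$ one pulls back the atlas to a compatible family $\sU\underset{\sX,x_m}{\times}\tau^{\leq m}S\ra\tau^{\leq m}S$ of smooth surjective $(n-1)$-geometric covers, and uses convergence of these (and of their fiber powers, again $(n-1)$-geometric) together with \'etale descent for $\sX$ and the fact that $\classical{\tau^{\leq m}S}=\classical{S}$ to reconstruct the point over $S$, uniquely. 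Either way, the one delicate point — and the main obstacle — is the interchange of the geometric realization $|\sU^\bullet|$ with the limit $\lim_m$ over Postnikov stages: the pointwise colimit $|\sU^\bullet|_{\mathrm{PreStk}}$ need not itself be convergent (geometric realization of a groupoid object does not commute with the limit over $m$), and it is only after \'etale sheafification, where sections of $f$ exist locally, that convergence is restored. Concretely this comes down to the compatibility of $\tau^{\leq m}$ with \'etale covers (\'etale morphisms being flat and remaining \'etale after truncation), exactly as used in the proof of Proposition~\ref{prop:n-geometric-is-integrable}; the remainder is formal. Alternatively, the statement may be cited from \cite[\S 17.3]{SAG}.
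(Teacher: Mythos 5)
Your setup (induction on $n$, convergence of affines and of disjoint unions of affines, $(n-1)$-geometricity of the terms $\sU^m$ of the \v{C}ech nerve) matches the paper's, and you correctly isolate the real difficulty: commuting the geometric realization $|\sU^\bullet|$ with the limit $\lim_m$ over Postnikov truncations. But neither of your two routes actually closes this gap. Route one rests on a false premise: the convergent completion ${}^{\rm conv}(-)$ is the composite of restriction to eventually coconnective affines with \emph{right} Kan extension back, i.e.\ the monad of a reflective localization; as an endofunctor of $\PStk$ it is not a left adjoint and does not commute with colimits computed in $\PStk$ --- if it did, every colimit of convergent prestacks would be convergent and the proposition would be trivial. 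You in fact contradict this premise yourself two sentences later when you (correctly) observe that $|\sU^\bullet|_{\PStk}$ need not be convergent. Route two states the desired conclusion (``reconstruct the point over $S$, uniquely'') and declares the remainder formal, but that reconstruction \emph{is} the interchange of $\lim_m$ with the realization and is exactly what has to be proved; the compatibility of $\tau^{\leq m}$ with \'etale covers does not by itself produce it.

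The paper's proof circumvents the interchange by a different device: one observes that to check $n$-geometricity it suffices to check that each restriction ${}^{\leq k}\sX$ to $k$-coconnective affine schemes is an $n$-geometric object of ${}^{\leq k}\PStk$, and then runs a double induction on $n$ and $k$ following \cite[Chapter 2, Proposition 4.4.9]{GRI}; convergence becomes the statement that $\sX$ is the limit of (the right Kan extensions of) its truncations, which is verified level by level rather than by pushing $\lim_m$ past $|\sU^\bullet|$ in one stroke. Your fallback of citing \cite[\S 17.3]{SAG} (nilcompleteness of geometric spectral stacks) would be an acceptable way out, but as written the passage between ``the main obstacle'' and ``the remainder is formal'' is a gap, not a proof.
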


\begin{proof}
Again we want to apply induction. It is clear that any affine scheme is convergent. 

However, to set up the inductive set up for $n$-geometric stacks one needs a trick. The main point is that in trying to check convergence directly for $\sX$ one tries to commute an infinite limit with respect to a geometric realization of its \v{C}ech nerve. To circumvent that one notices that to check that $\sX$ is $n$-geometric it is enough to check that each of its truncations ${^{\leq k}\sX}$ for $k\geq 0$ is an $n$-geometric stack\footnote{Essentially, one just repeats the definition of \S \ref{subsubsec:0-geometric-defn} and \S \ref{subsubsec:n-geometric-defn} for the category ${^{\leq n}\PStk} :=\mbox{Fun}(({^{\leq k}\Schaff})^{\rm op},\Spc)$.} in prestacks defined over $k$-coconnective affine schemes, i.e.\ functors $({^{\leq k}\Schaff})^{\rm op} \ra \Spc$. Thus, though our initial set up for $n$-geometric stacks is slightly different than $n$-Artin stacks, exactly the same argument as in \cite[Proposition 4.4.9]{GRI} setting up a double induction works.
\end{proof}


In the following result we follow the same argument as in Lemma 7.4.3 from \cite[Chapter 1]{GRII}, which proves that any $n$-Artin stack admits deformation theory.

\begin{prop}
\label{prop:n-geometric-has-deformation-theory} 
Let $\sX$ be an $n$-geometric stack, then $\sX$ admits corepresentable deformation theory.
\end{prop}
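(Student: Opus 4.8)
The plan is to argue by induction on $n$, following the proof of \cite[Chapter 1, Lemma 7.4.3]{GRII}, which establishes the analogous statement for $n$-Artin stacks (the author has flagged this as the model). Recall from Definition \ref{defn:def-theory} that $\sX$ admitting corepresentable deformation theory means that $\sX$ is convergent, infinitesimally cohesive, and admits a cotangent complex. Convergence is already available from Proposition \ref{prop:n-gometric-is-convergent}, so the content is the construction of the cotangent complex together with its base-change compatibility, and the verification of infinitesimal cohesiveness. For the base of the induction one takes $\sX$ to be an affine scheme, or a disjoint union of affine schemes: such prestacks are convergent, (infinitesimally) cohesive and carry an almost perfect cotangent complex by the standard deformation theory of \cite[Chapter 17]{SAG}. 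The case of a $0$-geometric stack then follows by \'etale descent, since an \'etale atlas $\sU \ra \sX$ has vanishing relative cotangent complex, $\sX$ is an \'etale sheaf, and $\sX \simeq |\sU^{\bu}|$ with every term of the \v{C}ech nerve a disjoint union of affine schemes.

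For the inductive step, let $n \geq 1$, let $\sX$ be $n$-geometric, and choose an $(n-1)$-representable smooth atlas $f:\sU \simeq \sqcup_I U_i \ra \sX$. Since $f$ is a smooth surjection and $\sX$ is an \'etale sheaf, $f$ is an effective epimorphism of \'etale sheaves, so $\sX \simeq |\sU^{\bu}|$ where $\sU^{\bu}$ is the \v{C}ech nerve of $f$; moreover each $\sU^m \simeq \sU \underset{\sX}{\times}\cdots\underset{\sX}{\times}\sU$ is $(n-1)$-geometric (as in the proof of Proposition \ref{prop:n-geometric-is-integrable}), and all of the structure maps of $\sU^{\bu}$ are $(n-1)$-geometric and smooth. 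By the inductive hypothesis each $\sU^m$ admits corepresentable deformation theory. To produce the cotangent complex of $\sX$ at a point $(S \overset{x}{\ra}\sX)\in \Schaff_{/\sX}$, note that $\sU\underset{\sX}{\times}S \ra S$ is $(n-1)$-geometric, smooth and surjective, hence admits a section after an \'etale base change $S' \ra S$; since the conditions defining a cotangent complex are \'etale-local on the base and $\sX$ is an \'etale sheaf, we may assume $x$ lifts to $\tilde{x}:S \ra \sU$. Then $\sU\underset{\sX}{\times}S$ admits a cotangent complex over $S$ by the inductive hypothesis, and together with $T^*_{\tilde{x}}\sU$ this pins down, via the fibre sequence $f^*T^*_x\sX \ra T^*_{\tilde{x}}\sU \ra T^*_{\tilde{x}}(\sU/\sX)$, an object $T^*_x\sX \in \QCoh(S)^-$; that it corepresents $\Lift_x(\sX)$ and is compatible with base change is deduced by descent along $\sU^{\bu}$ from the corresponding statements for the $\sU^m$. (The relative cotangent complexes appearing here are almost perfect by Proposition \ref{prop:properties-cotangent-complex}, since everything in sight is locally almost of finite presentation.)

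Infinitesimal cohesiveness of $\sX$ is obtained by the same descent along $\sU^{\bu}$: given a pushout square of nilpotent embeddings as in Definition \ref{defn:infinitesimally-cohesive}, one lifts the atlas along the nilpotent embeddings --- possible because $f$ is smooth, hence formally smooth --- to reduce the assertion for $\sX$ to the assertions for the $\sU^m$, which are infinitesimally cohesive by the inductive hypothesis, and then one glues using \'etale descent for $\sX$, the ambiguity of the lifts being controlled by the condition $(H_{1}^{W_{\rm nil.}})$ for $\sX$ (which holds since the diagonal of $\sX$ is $(n-1)$-geometric, hence infinitesimally cohesive by the inductive hypothesis, so Lemma \ref{lem:H1-for-laft-affines} applies).

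I expect the main obstacle to be precisely this descent step: one must check that the class of prestacks admitting corepresentable deformation theory is stable under forming the geometric realization of a smooth groupoid whose terms lie in the class. As in \cite[Chapter 1, \S 7.4]{GRII} this is run as a double induction --- on $n$ and on the simplicial degree --- in which one localizes the lifting problems defining $\Lift_x(\sX)$ and the cohesiveness squares along the smooth surjection $f$, solves them over the atlas using formal smoothness and the inductive hypothesis, and then reassembles the answers by \'etale descent. The remaining verifications are routine manipulations with the cotangent-complex fibre sequences, and I would only sketch them.
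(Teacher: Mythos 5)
Your strategy is sound and lands in the same family as the paper's argument (induction on $n$ via a smooth atlas and its \v{C}ech nerve), but the key reduction is organized differently. The paper does not construct the cotangent complex and verify infinitesimal cohesiveness separately as you do: it first invokes Proposition \ref{prop:n-gometric-is-convergent} for convergence and then funnels the entire deformation-theory package through Proposition \ref{prop:affine-homogeneous-is-deformation-theory}, so that the only thing left to check is that $\sX$ is $W_{\rm all}$-homogeneous, i.e.\ that $\Maps_{T_2/-}(T'_2,\sX) \ra \Maps_{T_1/-}(T'_1,\sX)$ is an equivalence for the relevant pushout squares. This is a \emph{property} (certain squares of spaces are pullbacks) rather than a construction, and the paper verifies it by comparing with the geometric realizations $\left|\Maps_{T_i/-}(T'_i,\sU^{\bu})\right|$: the horizontal comparison maps are monomorphisms, the left vertical map is an equivalence by the inductive hypothesis applied to $\sU\underset{\sX}{\times}\sU \ra \sU\times\sU$, and surjectivity on $\pi_0$ follows from formal smoothness of $f^m$ against the nilpotent embedding $T_1 \hra T'_1$; this cleanly reduces to $n=0$. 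Your route buys a more direct handle on the actual statement --- you explicitly produce $T^*_x\sX \in \QCoh(S)^-$ as the fibre of $T^*_{\tilde{x}}\sU \ra T^*_{\tilde{x}}(\sU/\sX)$, whereas the paper's reduction via Proposition \ref{prop:affine-homogeneous-is-deformation-theory} literally only yields a \emph{pro}-cotangent complex and leaves the corepresentability implicit --- but the cost is that your ``deduced by descent along $\sU^{\bu}$'' step, which you correctly flag as the main obstacle, is exactly where the work lives: one must show that $\Lift_x(\sX)(\sF)$ is computed from the atlas (lifting points of $\sX(S_{\sF})$ to $\sU(S_{\sF})$ by formal smoothness and controlling the ambiguity via $\sU\underset{\sX}{\times}\sU$), which is the same monomorphism-plus-$\pi_0$-surjectivity mechanism the paper runs, just packaged per functor $\Lift_x$ rather than once and for all at the level of homogeneity. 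Neither argument is more general, but the paper's is shorter because the homogeneity formulation lets a single diagram chase dispatch the pro-cotangent complex and cohesiveness simultaneously.
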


\begin{proof}
We already know that $\sX$ is convergent by Proposition \ref{prop:n-gometric-is-convergent}, by Proposition \ref{prop:affine-homogeneous-is-deformation-theory} it is enough to check that $\sX$ is $W^{\rm all}$-homogeneous. 

Since $\sX$ is $n$-geometric there exists a smooth atlas $f:\sU \ra \sX$ such that $\sU$ and $f$ are $(n-1)$-geometric, so admit deformation theory by the inductive hypothesis. By Proposition \ref{prop:affine-homogeneous-is-deformation-theory} we need to check that for any pushout diagram as (\ref{eq:P-nil-square}) where $f:T_1 \ra T_2$ is arbitrary and $T_1 \hra T'_1$ is a nilpotent embedding that one has an isomorphism
\[
    \sX(T'_2) \ra \sX(T'_1)\underset{\sX(T_1)}{\times}\sX(T_2).
\]

In other words, the canonical morphism $\Maps_{T_2/-}(T'_2,\sX) \ra \Maps_{T_1/-}(T'_1,\sX)$ is an equivalence. Consider $\sU^{\bu}$ the \v{C}ech nerve of $f$ one obtains a diagram
\begin{equation}
\label{eq:inductive-deformation-theory-diagram}
    \begin{tikzcd}
    \left|\Maps_{T_2/-}(T'_2,\sU^{\bu})\right| \ar[r] \ar[d] & \Maps_{T_2/-}(T'_2,\sX) \ar[d] \\
    \left|\Maps_{T_1/-}(T'_1,\sU^{\bu})\right| \ar[r] & \Maps_{T_1/-}(T'_1,\sX)
    \end{tikzcd}
\end{equation}
We claim that the horizontal morphisms are monomorphisms of spaces. Indeed, suppose that we have two objects $u_1,u_2 \in \sU(T'_1)$ such that $f(u_1) \simeq f(u_2)$, then one has a morphism
\[
    T'_1 \ra \sU\underset{\sX}{\times}\sU \simeq \sX\underset{\sX\times \sX}{\times}\sU\times\sU,
\]
which gives a morphism between $u_1 \ra u_2$ and so produces the same object in the geometric realization $\left|\Maps(T'_1,\sU^{\bu})\right|$. 

Now, we notice that the left vertical morphism in (\ref{eq:inductive-deformation-theory-diagram}) is an equivalence. Indeed, $\sU\underset{\sX}{\times}\sU \ra \sU\times\sU$ is $(n-1)$-geometric, as the pullback of the diagonal of $\sX$, and the claim follows by the inductive hypothesis. 

Thus, we will be done if we check that the horizontal morphisms of (\ref{eq:inductive-deformation-theory-diagram}) are also surjections on $\pi_0$. However, any diagram for $m\geq 1$
\[
    \begin{tikzcd}
    T_1 \ar[d] \ar[r] & \sU^{m} \ar[d,"f^m"] \\
    T'_1 \ar[r] \ar[ur,dashed] & \sX
    \end{tikzcd}
\]
admits a lift since $T_1 \hra T'_1$ is a nilpotent embedding and $f^m$ is formally smooth. So we are reduced to the case of $n=0$. This case can be dealt with an argument similar to the proof of Proposition \ref{prop:n-geometric-is-integrable} above. Alternatively, this is Corollary 17.2.5.4 in \cite{SAG}.
\end{proof}

\begin{lem}
\label{lem:n-geometric-n-truncated}
Let $\sX$ be an $n$-geometric stack, then $\classical{\sX}$ is $n$-truncated. 
\end{lem}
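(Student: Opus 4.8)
The plan is to argue by induction on $n$, controlling the space $\classical{\sX}(T_0)$ through its loop spaces, which are governed by the diagonal of $\sX$.

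When $n=0$ there is nothing to prove: that $\classical{\sX}$ is $0$-truncated is exactly part b) of the definition of a $0$-geometric prestack in \S\ref{subsubsec:0-geometric-defn}. So assume $n\ge 1$ and that the lemma is already known for $(n-1)$-geometric stacks. Fix a classical affine scheme $T_0$ and a point $x\in\sX(T_0)=\classical{\sX}(T_0)$. Since $n\ge 1$ and $\pi_0(\sX(T_0))$ is a set, it is enough to prove that the loop space $\Omega_x\big(\sX(T_0)\big)$ is $(n-1)$-truncated, i.e.\ that $\pi_k(\sX(T_0),x)=0$ for $k>n$. Consider the prestack
\[
    \sZ\;:=\;\sX\underset{\sX\underset{S}{\times}\sX}{\times}T_0,
\]
the fiber product formed along the composite $T_0\overset{x}{\ra}\sX\overset{\Delta_{\sX,S}}{\ra}\sX\underset{S}{\times}\sX$. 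By part b) of \S\ref{subsubsec:n-geometric-defn} the diagonal $\Delta_{\sX,S}$ is representable by an $(n-1)$-geometric stack, and $T_0$ is an affine scheme, so $\sZ$ is an $(n-1)$-geometric stack; hence $\classical{\sZ}(T_0)$ is $(n-1)$-truncated by the inductive hypothesis.

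On the other hand, restriction of prestacks along $\clSchaff\hookrightarrow\Schaff$ is computed objectwise and therefore preserves finite limits, so
\[
    \classical{\sZ}(T_0)\;=\;\sZ(T_0)\;\simeq\;\sX(T_0)\underset{(\sX\underset{S}{\times}\sX)(T_0)}{\times}\Hom(T_0,T_0),
\]
where the right-hand map $\Hom(T_0,T_0)\ra(\sX\underset{S}{\times}\sX)(T_0)$ sends $\phi\mapsto\Delta_{\sX,S}(x\circ\phi)$. Because $\Hom(T_0,T_0)$ and $\Hom(T_0,S)$ are sets, the homotopy fiber of the projection $\sZ(T_0)\ra\Hom(T_0,T_0)$ over $\id_{T_0}$ is the homotopy fiber of the relative diagonal $\sX(T_0)\ra\sX(T_0)\underset{\Hom(T_0,S)}{\times}\sX(T_0)$ over $(x,x)$, which is precisely $\Omega_x\big(\sX(T_0)\big)$. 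Being a union of path components of the $(n-1)$-truncated space $\classical{\sZ}(T_0)$ (the base $\Hom(T_0,T_0)$ is discrete), this loop space is $(n-1)$-truncated, as desired.

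Since $T_0$ and $x$ were arbitrary, $\classical{\sX}$ is $n$-truncated, completing the induction. I do not expect any genuine obstacle: the argument is the standard ``diagonal induction'', and the only point that needs a little care is the identification of $\Omega_x(\sX(T_0))$ with a fiber of $\classical{\sZ}(T_0)$, which amounts to evaluating the defining pullback of $\sZ$ at $T_0$ while using that all prestacks in sight are tacitly over $S$ and that $\Hom(T_0,S)$ is discrete.
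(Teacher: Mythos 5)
Your proof is correct and follows essentially the same route as the paper: the paper reduces $n$-truncatedness of $\classical{\sX}$ to $(n-1)$-truncatedness of the classical diagonal and inducts down to the $n=0$ case, which holds by definition. You simply spell out the two steps the paper leaves implicit — that the fiber product of the diagonal with an affine point is $(n-1)$-geometric (hence $(n-1)$-truncated by induction), and the loop-space identification showing this controls $\pi_k(\sX(T_0),x)$ for $k>n$ — both of which are carried out correctly.
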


\begin{proof}
We notice that $\classical{\sX}$ is $n$-truncated if
\[
    \classical{(\Delta_{\sX,S})}: \classical{\sX} \ra \classical{(\sX\underset{S}{\sX})}
\]
is $(n-1)$-truncated. Thus, by induction we are reduced to the case of $n=0$, but by our conventions \S \ref{subsubsec:0-geometric-defn} any $0$-geometric stack is $0$-truncated.
\end{proof}

\begin{rem}
\label{rem:n-truncated-and--n-connective}
We notice that if $\sX$ is a prestack that admits a cotangent complex such that $\classical{\sX}$ is $n$-truncated, then for any affine scheme point $x:T \ra \sX$ one has
\[
    T^*_{x}\sX \in \QCoh(T)^{\leq n},
\]
i.e.\ $T^*\sX$ is $(-n)$-connective. Indeed, by \cite[Chapter 1, \S 3.1.6]{GRII} we know that $T^*\sX$ is $(-n)$-connective if and only if for any $\sF \in \QCoh(T)^{\heartsuit}$ the space $\Maps_{T/}(T_{\sF},\sX)$ is $n$-truncated. Notice that if $\sF \in \QCoh(T)^{\heartsuit}$ one has a pushout diagram
\[
    \begin{tikzcd}
    \classical{T} \ar[r] \ar[d] & \classical{T_{\sF}} \ar[d] \\
    T \ar[r] & T_{\sF},
    \end{tikzcd}
\]
which $\sX$ takes to the following pullback diagram of spaces
\[
    \begin{tikzcd}
    \Maps(T_{\sF},\sX) \ar[r] \ar[d] & \Maps(\classical{T_{\sF}},\sX) \ar[d] \\
    \Maps(T,\sX) \ar[r] & \Maps(\classical{T},\sX)
    \end{tikzcd}
\]
Thus, one has 
\begin{equation}
    \label{eq:classical-and-derived-fibers}
    \Maps_{T/}(T_{\sF},\sX) \simeq \Maps_{\classical{T}/}(\classical{T_{\sF}},\sX)
\end{equation}
where the right-hand side of (\ref{eq:classical-and-derived-fibers}) is $n$-truncated as the fiber of two $n$-truncated spaces.
\end{rem}

\begin{warning}
Notice that the converse to Remark \ref{rem:n-truncated-and--n-connective} is not true, for instance if we consider the $1$-geometric stack $\sX:= BG$, for any reductive algebraic group $G$, then $\sX_{\rm dR}$ has vanishing cotangent complex, so in particular, is $n$-connective for any $n \in \bZ$, but $\sX_{\rm dR}(R) = BG(R^{\rm red.})$ is not $0$-truncated.
\end{warning}

\begin{proof}[Proof of Main Theorem: (i-vii) are necessary]
Assume that $\sX$ is an $n$-geometric locally almost of finite presentation stack. 

Conditions (i) and (ii) are part of the definition.

Condition (iii) is Proposition \ref{prop:n-geometric-is-integrable}.

Proposition \ref{prop:n-geometric-has-deformation-theory} implies conditions (iv)' and (v-vi), which imply (iv-vi).

Condition (vii)$_{n}$ is Lemma \ref{lem:n-geometric-n-truncated}.
\end{proof}

\subsection{\texorpdfstring{\'E}{E}tale surjection}

We start with an observation that will be allow us to proceed by induction on the proof of the main result. For that we introduce some terminology to streamline the discussion.

\begin{defn}
Given a morphism of prestacks $f:\sX \ra \sY$ and an integer $k \geq -2$, we say that $\classical{f}:\classical{\sX}\ra \classical{\sY}$ is $k$-truncated if for every classical affine scheme $T_0$ one has
\begin{equation}
    \label{eq:fiber-k-truncated}
    \Fib(\sX(T_0) \ra \sY(T_0))
\end{equation}
is a $k$-truncated space\footnote{Recall that, by definition, $(-2)$-truncated means contractible and $(-1)$-contractible means either empty or contractible.}.
\end{defn}

\begin{rem}
\label{rem:n-truncated-structure-map}
Notice that for a prestack $p:\sX \ra S$ and an integer $n\geq 0$, $\sX$ satisfies condition $(vii)_n$ if and only if $\classical{p}$ is $n$-truncated. Indeed, this follows from considering the long exact sequence of homotopy groups associated to (\ref{eq:fiber-k-truncated}) and noticing $\classical{S}$ is $0$-truncated for any affine scheme $S$ (see \cite[Chapter 2, \S 1.8.6]{GRI}).
\end{rem}

\begin{defn}
\label{defn:n-good}
Given a morphism $f:\sX \ra \sY$ of prestacks, we say that\footnote{We apologize for the terribly non-descriptive name that we chose for this property and warn the reader that the use of $n$-good here is different than that in \cite[\S 7.3]{DAG} in which condition (iv)' is imposed instead of (iv).} \emph{$n$-good} if for every affine scheme $S \ra \sY$ the base change morphism
\[
    \sX\underset{\sY}{\times}S \ra S
\]
satisfies conditions (i-vi) from Theorem \ref{thm:main-result}, and $\classical{(\sX\underset{\sY}{\times}S)} \ra \classical{S}$ is $n$-truncated.
\end{defn}


Notice that Definition \ref{defn:n-good} makes sense for $n \geq -2$. The proof of Theorem \ref{thm:main-result} will be by induction on $n$, but we need to make precise what the statement means for $n=-2$ and $n=-1$.

\begin{convention}
We will say that a prestack $\sX$ is 
\begin{itemize}
    \item \emph{$(-2)$-geometric} if $\sX$ is representable by an affine scheme;
    \item \emph{$(-1)$-geometric} if there exists an open embedding of $\sX$ into an affine scheme.
\end{itemize}
\end{convention}

Before starting the proof we introduce some notation to keep track of the iterations of passing to the diagonal morphism.

\begin{notation}
Given a morphism $f:\sX \ra \sY$ of prestacks and $S \ra \sX \underset{\sY}{\times}\sX$ a morphism from an affine scheme $S$ we will often consider the pullback square
\[
    \begin{tikzcd}
    D_{\sX/\sY,S} \ar[r,"(\Delta_{\sX/\sY})_{S}"] \ar[d] & S \ar[d] \\
    \sX \ar[r,"\Delta_{\sX/\sY}"] & \sX \underset{\sY}{\times}\sX
    \end{tikzcd}
\]
\end{notation}

The following observation is useful in performing the inductive steps in the proofs below.

\begin{prop}
\label{prop:n-good-morphism-has-n-1-good-diagonal}
Consider an $n$-good prestack $p:\sX \ra S$, then for any affine scheme $U\ra \sX\underset{S}{\times}\sX$ the diagonal morphism
    \[
        (\Delta_{\sX/S})_U: D_{\sX/S,U} \ra U
    \]
is $(n-1)$-good.
\end{prop}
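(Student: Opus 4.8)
The plan is to verify the two defining conditions of Definition \ref{defn:n-good} by hand, after a harmless reduction. For an affine scheme $T \ra U$ there is a canonical identification
\[
    D_{\sX/S,U}\underset{U}{\times}T \;\simeq\; \sX\underset{\sX\underset{S}{\times}\sX}{\times}T \;=\; D_{\sX/S,T},
\]
so it suffices to show that for \emph{every} affine scheme $T$ equipped with a map $T \ra \sX\underset{S}{\times}\sX$ the prestack $D := D_{\sX/S,T}$ satisfies conditions (i)--(vi) of Theorem \ref{thm:main-result} and that $\classical{D} \ra \classical{T}$ is $(n-1)$-truncated; the case $T = U$ is then the assertion. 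Everything below is read off the fibre square defining $D$, namely $D \simeq \sX\underset{\sX\underset{S}{\times}\sX}{\times}T$ with right-hand vertical leg $T \ra \sX\underset{S}{\times}\sX$ and bottom leg $\Delta_{\sX/S}$.

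For conditions (i)--(vi) I would argue by stability under fibre products. Since $\sX$ is $n$-good it satisfies (i)--(vi); the prestack $\sX\underset{S}{\times}\sX$ satisfies them too, being the fibre product over $S$ of $\sX$, $\sX$ and $S$, and an affine scheme (here $S$, and after base change also $T$), regarded over itself, satisfies all of (i)--(vi) trivially --- it is an \'etale sheaf, is integrable and convergent, is $P$-homogeneous, is locally almost of finite presentation, and admits a cotangent complex at every point. So everything reduces to the claim that (i)--(vi) are inherited by fibre products of prestacks, which holds because in $\Spc$ finite limits commute with arbitrary limits and with filtered colimits: for (i) one uses that \'etale sheaves are stable under limits; for (ii) that ${}^{\leq m}D \simeq {}^{\leq m}\sX\times_{{}^{\leq m}(\sX\underset{S}{\times}\sX)}{}^{\leq m}T$ and that filtered colimits commute with this pullback; for (iii) and (vi) that the relevant inverse limits commute past the pullback; for $(iv)_{\rm triv}$ that, by the row-versus-column interchange used in the proof of Lemma \ref{lem:compatible-relative-absolute}, the homogeneity comparison map for $D$ is the fibre product of those for $\sX$, $\sX\underset{S}{\times}\sX$ and $T$; and for $(v)_{\rm ft}$ that, invoking Lemma \ref{lem:cotangent-complex-from-ft-points} (permissible as $D$ already satisfies (ii) and (vi)), at any affine point $x$ of $D$ the functor $\Lift_x(D)$ of Definition \ref{defn:cotangent-complex-at-x} is the pointwise fibre product $\Lift_x(\sX)\times_{\Lift_x(\sX\underset{S}{\times}\sX)}\Lift_x(T)$, whence both the small-limit preservation and the (filtered) colimit condition pass to $D$.

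The core of the argument is the truncatedness of $\classical{D} \ra \classical{T}$. Fix a classical affine scheme $T_0$ and a point $t\colon T_0 \ra T$, with image $(x_1,x_2)$ in $(\sX\underset{S}{\times}\sX)(T_0) = \sX(T_0)\underset{S(T_0)}{\times}\sX(T_0)$. From the fibre square, the fibre of $\classical{D}(T_0) \ra \classical{T}(T_0)$ over $t$ is the fibre of the diagonal $\sX(T_0) \ra \sX(T_0)\underset{S(T_0)}{\times}\sX(T_0)$ over $(x_1,x_2)$; since $S(T_0) = \classical{S}(T_0)$ is $0$-truncated, this is the space of paths in $\sX(T_0)$ from $x_1$ to $x_2$, equivalently the path space between $x_1$ and $x_2$ inside $\Fib\bigl(\sX(T_0) \ra S(T_0)\bigr)$. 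As $\sX$ is $n$-good, $\classical{\sX} \ra \classical{S}$ is $n$-truncated, so this fibre is $n$-truncated, and the path space between two of its points is $(n-1)$-truncated. Hence $\classical{D} \ra \classical{T}$ is $(n-1)$-truncated, finishing the proof.

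I expect the only genuinely non-formal point to be this last paragraph: passing to the relative diagonal lowers by one the truncatedness of the structure morphism, which is precisely what drives the induction on $n$ in the proof of Theorem \ref{thm:main-result}. The remaining care is minor and lives in $(v)_{\rm ft}$ --- one has to note that $\sX$ (via $n$-goodness and Lemma \ref{lem:cotangent-complex-from-ft-points}), hence $\sX\underset{S}{\times}\sX$, and trivially $T$, admit a cotangent complex at every affine point, and that the indexing category in condition 2) of Definition \ref{defn:cotangent-complex-at-x} is filtered, so the colimit there commutes with the pullback computing $\Lift_x(D)$.
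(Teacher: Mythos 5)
Your proposal is correct and follows essentially the same route as the paper: conditions (i)--(vi) are inherited by the fibre product by commuting limits (with the same extra care for (ii) via the truncation functors and laft-ness of the three factors), and the truncation drop in (vii) is your path-space observation, which is exactly the paper's long-exact-sequence argument identifying $\Fib(\Delta)$ with a loop space of $\Fib(\sX(T_0)\ra S(T_0))$. The only cosmetic difference is that you make explicit the reduction $D_{\sX/S,U}\underset{U}{\times}T\simeq D_{\sX/S,T}$, which the paper leaves implicit.
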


\begin{proof}
First we notice that all the conditions (i-vii) and (iv') are stable under finite limits. Indeed, (i), (iii), (iv), (v), (vi) all follow formally since we are just commuting limits with each other.

For (ii) we notice that given $n\geq 0$ and a co-filtered diagram $(T_i)_I$ of $n$-coconnective affine schemes with limit $T\simeq \lim_I T_i$, one has
\begin{equation}
    \label{eq:n-truncation-is-not-necessary}
    \tau^{\leq n}(D_{\sX/S,U})(T) \overset{\simeq}{\ra}\tau^{\leq n}\left(\tlen{U}\underset{\tlen{\sX\underset{S}{\times}\sX}}{\times}\tlen{\sX}\right)(T),
\end{equation}
since a co-filtered limit of $n$-coconnective affine schemes is automatically $n$-coconnective, one obtains that the right-hand side of (\ref{eq:n-truncation-is-not-necessary}) is equivalent to 
\[
    \tlen{U}\underset{\tlen{\sX\underset{S}{\times}\sX}}{\times}\tlen{\sX}(T) \simeq
    \tlen{U(T)}\underset{\tlen{\sX\underset{S}{\times}\sX}(T)}{\times}\tlen{\sX}(T)
\]
Since $U,\sX$ and $\sX\underset{S}{\times}\sX$ are all locally almost of finite presentation we can rewrite the above as
\[
    \colim_{I^{\rm op}}\tlen{U}(T_i)\underset{\tlen{\sX\underset{S}{\times}\sX}(T_i)}{\times}\tlen{\sX}(T_i)
\]
where we used that filtered colimits commute with finite limits. Putting these two isomorphisms together we obtain that $\tlen{D_{\sX/S,U}}$ sends co-filtered limits to colimits.

Finally, for (vii) we notice that for any classical affine scheme $T_0$ one has a pullback of spaces
\[
    \begin{tikzcd}
    D_{\sX/S,U}(T_0) \ar[r] \ar[d] & U(T_0) \ar[d] \\
    \sX(T_0) \ar[r] & (\sX\underset{S}{\times}\sX)(T_0).
    \end{tikzcd}
\]

Then $D_{\sX/S,U}(T_0)$ is $(n-1)$-truncated, since the fiber of the lower horizontal morphism is $(n-1)$-truncated. Indeed, by considering the decomposition of the identity $\sX(T_0) \overset{\beta}{\ra} \sX\underset{S}{\times}\sX(T_0) \overset{\alpha}{\ra} \sX(T_0$ in the category of spaces, one has the following long-exact of homotopy groups
\[
    \cdots \ra \pi_{k+1}(\Fib(\alpha)) \ra \pi_k(\Fib(\beta)) \ra \pi_k(\Fib(\alpha \circ \beta)) \ra \pi_k(\Fib(\alpha)) \ra \pi_{k-1}(\Fib(\beta)) \ra \cdots,
\]
where the middle terms vanish, since $\Fib(\alpha\circ \beta)$ is contractible.
\end{proof}

The \hyperref[thm:main-result]{Main Theorem} is proved by conveniently exploiting the following result due to J.\ Lurie. It provides us with enough points almost of finite presentation which are formally smooth over an $n$-good prestack, i.e.\ one constructs an \'etale surjection from a disjoint union of affine schemes into our prestack.

\begin{thm}
\label{thm:etale-surjection}
Let $p:\sX \ra S$ be a prestack over an excellent affine scheme, such that $p$ satisfies conditions:
\begin{enumerate}
    \item $\sX$ is a sheaf with respect to the \'etale topology;
    \item $\sX$ is locally almost of finite presentation;
    \item $\sX$ is integrable;
    \item[$(iv)'_{\rm all}$] $\sX$ is infinitesimally cohesive;
    \item[$(v)^{\rm laft}_{ft}$] $\sX$ admits a cotangent complex at every point $(T \overset{x}{\ra} \sX) \in (\Schaffconvft)_{/\sX}$ such that
    \[
        T^*_x\sX \in \Pro(\QCoh(T)^-)_{\rm laft};
    \]
    \item[(vi)] $\sX$ is convergent;
    \item[$(vii)_n$] $\classical{\sX}$ is $n$-truncated. 
\end{enumerate}
Let $I$ denote the subcategory of $\Schaff_{/\sX}$ generated by formally smooth morphisms $U \ra \sX$ such that $U \ra S$ is almost of finite presentation. Consider the cover 
\[
f:\sU := \bigsqcup_{I}U_i \ra \sX.
\]
Then $f$ is an \'etale surjection.
\end{thm}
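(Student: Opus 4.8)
The plan is to show that $f$ is a surjection for the \'etale topology, i.e.\ an effective epimorphism of \'etale sheaves; the remaining properties are formal. First, $f$ is formally smooth, being a coproduct of the formally smooth morphisms $U_i \ra \sX$: a lifting problem against a nilpotent embedding $T \hra T'$ reduces to one for a single $U_i$ since $T \hra T'$ is a homeomorphism on underlying spaces, so the decomposition into components of a map $T' \ra \sU$ is forced by that of its restriction to $T$. Likewise $\sU$ is locally almost of finite presentation over $S$, being a coproduct of morphisms almost of finite presentation, coproducts commuting with filtered colimits in $\Spc$. It thus remains to prove: for every affine scheme $T$ and every $\xi \in \sX(T)$ there is an \'etale cover $\{T' \ra T\}$ such that each $\xi|_{T'}$ lifts to $\sU$. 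Since $\sX$ and $\sU$ are locally almost of finite presentation, writing an arbitrary $T$ as a cofiltered limit $\lim_i T_i$ of affine schemes almost of finite presentation over $S$, choosing $\xi_i \in \sX(T_i)$ inducing $\xi$, and pulling back to $T$ a cover found for $(T_i,\xi_i)$, reduces us to the case where $T \ra S$ is almost of finite presentation; in particular, since $S$ is excellent, the local rings of $T$ are excellent.

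\textbf{Lifting at field points.} Let $x : \Spec\,k \ra \sX$ with $k$ a finitely generated extension of a residue field of $R$. By Proposition \ref{prop:approximate-chart} there is a factorization $\Spec\,k \overset{x_B}{\ra} \Spec\,B \ra \sX$ with $\Spec\,B \ra S$ almost of finite presentation and $H^{-1}(T^*_{x_B}(\Spec\,B/\sX)) = 0$. Writing $\mathfrak q := \ker(B \ra k)$ and using that the cotangent complex is compatible with the faithfully flat base change along $\kappa(\mathfrak q) \hra k$, the same vanishing holds for the restriction to $\Spec\,\kappa(\mathfrak q)$; hence Proposition \ref{prop:smooth-chart} applies and yields a Zariski neighborhood $\Spec\,B' \ra \Spec\,B$ of $\mathfrak q$ and a morphism $\Spec\,B' \ra \Spec\,C$ inducing an isomorphism on classical truncations, with $\Spec\,C \ra \sX$ formally smooth and $\Spec\,C \ra S$ almost of finite presentation, so that $(\Spec\,C \ra \sX) \in I$. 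Since $\mathfrak q$ lies in $\Spec\,B'$, the point $x$ factors as $\Spec\,k \ra \Spec\,B' \ra \Spec\,C \ra \sX$, hence through $\sU$. Finally, an arbitrary field point $\Spec\,k \ra \sX$ factors through some $\Spec\,A_\alpha \ra \sX$ with $A_\alpha \subset k$ a finitely generated $R$-subalgebra (as $\sX$ is locally almost of finite presentation), hence through $\Spec\,\kappa(\mathfrak q_\alpha) \ra \sX$ for the induced prime, which is of the above type; so every field point of $\sX$ lifts to $\sU$.

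\textbf{Lifting near a point of $T$.} Fix $\xi \in \sX(T)$ and a point $t \in T$, with local ring $\sO := \sO_{T,t}$, Henselisation $\sO^{\rm h}$ and completion $\widehat{\sO}$. By the previous paragraph the restriction $\Spec\,\kappa(t) \ra \sX$ lifts along the formally smooth morphism $U_i \ra \sX$ for some $i$; applying formal smoothness inductively to the nilpotent embeddings $\Spec(\sO/\fm^{n}) \hra \Spec(\sO/\fm^{n+1})$ produces compatible lifts $\Spec(\sO/\fm^{n}) \ra U_i$ of $\xi$, which assemble (as $U_i$ is affine, hence integrable) to a morphism $\Spec\,\widehat{\sO} \ra U_i$; since $\sX$ is integrable this composes with $U_i \ra \sX$ to $\xi|_{\Spec\,\widehat{\sO}}$. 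Now $\widehat{\sO}$ is geometrically regular over $\sO^{\rm h}$, so by the derived Popescu theorem (Proposition \ref{prop:derived-Popescu}) $\Spec\,\widehat{\sO} \simeq \lim_\lambda \Spec\,D_\lambda$ with $\Spec\,D_\lambda \ra \Spec\,\sO^{\rm h}$ smooth; since $U_i$ is locally almost of finite presentation, the lift descends to $\Spec\,D_\lambda \ra U_i$ for some $\lambda$. Spreading smooth morphisms out over the cofiltered system $\Spec\,\sO^{\rm h} \simeq \lim_\alpha T_\alpha$ of affine \'etale neighborhoods of $t$, and using the locally almost of finite presentation property of $\sX$ and $U_i$ to descend the relevant compatibilities, we obtain an affine \'etale neighborhood $T_\alpha \ra T$ of $t$, a smooth morphism $V \ra T_\alpha$ with a point above $t$, and a morphism $V \ra U_i$ over $\sX$. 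Since a smooth morphism admits an \'etale-local section through any prescribed point, there is a further \'etale neighborhood $T' \ra T_\alpha$ of $t$ and a section $s : T' \ra V$; the composite $T' \overset{s}{\ra} V \ra U_i$ then lifts $\xi|_{T'}$. Letting $t$ range over $T$, these neighborhoods cover $T$, and since $\sX$ is an \'etale sheaf this exhibits $\xi$ as \'etale-locally liftable to $\sU$; as $T$ and $\xi$ were arbitrary, $f$ is an \'etale surjection.

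The main obstacle is the third paragraph, and within it the passage from a lift over the complete local ring $\widehat{\sO}$ to a lift over an actual \'etale neighborhood of $t$ (the ``Step 9'' alluded to in the discussion of desingularization), which rests on the derived Popescu theorem together with the spreading-out of smooth morphisms and the existence of \'etale-local sections of smooth maps. The genuinely deep input — the construction of approximate and smooth charts at a field point — has already been isolated in Propositions \ref{prop:approximate-chart} and \ref{prop:smooth-chart}, so the remaining work is organizational, keeping track of the various pro-systems and compatibilities.
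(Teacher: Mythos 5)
Your proof follows the paper's own argument essentially step for step: reduce to affine points almost of finite presentation, lift field points via Proposition \ref{prop:approximate-chart} refined by Proposition \ref{prop:smooth-chart}, extend to the formal completion by formal smoothness plus integrability, and descend through the Henselisation via derived Popescu (Proposition \ref{prop:derived-Popescu}) to an actual \'etale neighborhood. The only step you omit is the paper's reduction to \emph{classical} affine points (its Step 2), which your use of local rings, Henselisations and completions of $T$ implicitly requires; but this follows at once from the formal smoothness of $f$ against the nilpotent embedding $\classical{T} \hra T$, which you establish in your first paragraph.
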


\begin{proof}

\textit{Step 1:} $f$ is an \'etale surjection if the following holds: for every affine scheme point $x:T \ra \sX$ there exists an \'etale cover $T' \ra T$ such that one can find a factorization
\begin{equation}
    \label{eq:etale-cover-lift}
    \begin{tikzcd}
    T' \ar[r,dashed] \ar[d] & U_i \ar[d] \\ 
    T \ar[r] & \sX
    \end{tikzcd}
\end{equation}
where $(f_i: U_i \ra \sX) \in I$.

\textit{Step 2 (reduction to classical points):} since $f_i$ is formally smooth, in any diagram
\[
    \begin{tikzcd}
    \classical{T} \ar[r,dashed] \ar[d] & U_i \ar[d] \\
    T \ar[r] & \sX
    \end{tikzcd}
\]
we can fill the dashed arrow. Thus, it is enough to consider the case of classical affine scheme points $T \in \clSchaff_{/\sX}$.

\textit{Step 3 (reduction to point almost of finite type):} since $\sX$ is locally almost of finite presentation, given a diagram $T \simeq \lim_{I}T_i$ presenting the classical affine scheme $T$ as a (co-)filtered limit of affine schemes of finite presentation over $S$, the point $x$ is determined by a point $x_i:T_i \ra \sX$. Thus, we will assume that $T$ is of finite presentation over $S$.

\textit{Step 4 (reduction to \'etale neighborhoods):} suppose that for each point $t \in |T|$ there exists an \'etale neighborhood of $t$, i.e.\ $V_t \ra T$ \'etale whose image contains $t$ such that one has a lift
\[
    \begin{tikzcd}
    V_t \ar[r,dashed] \ar[d] & \sU \ar[d] \\
    T \ar[r] & \sX
    \end{tikzcd}
\]
then one can find lifts for any diagram as in (\ref{eq:etale-cover-lift}). Indeed, one can simply take the \'etale cover $\bigsqcup_{t \in |T|}V_t \ra T$.

\textit{Step 5 (reduction to Henselinisations):} Let $T^{\rm H.}_t$ be the Henselisation of $T$ at the point $t$. Concretely, we can write $T^{\rm H.}_T$ as a limit over some \'etale neighborhoods of $t$\footnote{Which ones is not relevant for the argument, but those whose induced morphism between the residue fields is an isomorphism (cf.\ \cite[Chapitre 8, Proposition 2 and Th\'eor\`eme 1]{MR0277519}).}, i.e.\ there exists a filtered set $I$ and for each $i \in I$ an \'etale neighborhood $V_i \ra T$ of $t$ such that 
\[
    T^{\rm H.}_t \simeq \lim_{I^{\rm op}} V_i.
\]
Assume that a lift as follows exists
\[
    \begin{tikzcd}
    T^{\rm H.}_t \ar[r,"x^{\rm H.}_t"] \ar[d] & \sU \ar[d] \\
    T \ar[r] & \sX
    \end{tikzcd}
\]
Then, since $\sX$ is locally almost of finite presentation, $\classical{\sX}$ takes limits to colimits, so there exists $i \in I$ and a morphism $x_{i}: V_i \ra \sX$ such that one has a lift
\[
    \begin{tikzcd}
    T^{\rm H.}_t \ar[r,"x^{\rm H.}_t"] \ar[d] & \sU \ar[d,"\id_{\sU}"] \\
    V_i \ar[r,"x_i"] \ar[d] & \sU \ar[d] \\
    T \ar[r] & \sX
    \end{tikzcd}
\]

\textit{Step 6 (lift of closed point):} we will construct a lift for the closed point of $T^{\rm H.}_t$, let $x_t: \Spec(\kappa(t)) \ra T^{\rm H.}_t \ra \sX$ denote the restriction of $x$ to the residue field of $t \in |T|$. By Step 4, we know that $T \ra S$ is finitely presented, hence the composite $p\circ x_{t}:\Spec(\kappa(t)) \ra T \ra S$ factors as follows
\[
    \begin{tikzcd}
    \Spec(\kappa(t)) \ar[r] \ar[d] & T \ar[d] \\
    \Spec(\kappa(s)) \ar[r] & S
    \end{tikzcd} 
\]
where $s \in |S|$ and $\kappa(s) \ra \kappa(t)$ is a finitely generated field extension\footnote{Indeed, given any factorization 
\[
    \begin{tikzcd}[ampersand replacement=\&]
    k[S] \ar[r] \ar[d] \& k[T] \ar[d] \\
    k \ar[r] \& \kappa(t)
    \end{tikzcd}
\]
and a set of elements $S$ such that $\kappa(t) \subset k(S)$, since $k[T]$ is Henselian we can lift the generators of $\kappa(t)$ to generators of $k[T]$ over $k[S]$, which could then be reduced to a finite number, by the finite presentation assumption.
}. Thus, we can apply Proposition \ref{prop:approximate-chart} to obtain an ``approximate chart'', that is, a factorization of $x_t$ as follows
\[
    \begin{tikzcd}
    \Spec(\kappa(t)) \overset{x_{t,B}}{\ra} \Spec(B) \overset{g}{\ra} \sX
    \end{tikzcd}
\]
where $\Spec(B) \ra S$ is almost of finite presentation and $H^{-1}(T^*_{x'_t}(g)) = 0$.

\textit{Step 7:} Now we by Proposition \ref{prop:smooth-chart} there exists a non-empty Zariski neighborhood $\Spec\, B' \ra \Spec\, B$ such that the composite $g':\Spec\, B' \ra \Spec\, B \ra \sX$ is formally smooth and $\Spec\, B' \ra S$ is almost of finite presentation. In other words, there exists a commutative diagram as follows
\[
    \begin{tikzcd}
    & \Spec\, B' \ar[r] \ar[d] & \sU \ar[d] \\
    \Spec\, \kappa(t) \ar[r,"x_{t,B}"] & \Spec\, B \ar[r,"g"] & \sX,
    \end{tikzcd}
\]
which gives a morphism 
\[
    \Spec\, \kappa'(t) := \Spec\, \kappa(t)\underset{\Spec\, B}{\times}\Spec\,B' \ra \Spec\, B'.
\]
Since $\Spec\, \kappa'(t) \ra \Spec\, \kappa(t)$ is an non-empty Zariski neighborhood, up to localizing $T$ further from the beginning we can assume that $\kappa(t) \overset{\simeq}{\ra}\kappa'(t)$. Thus, we have a lift
\[
    \begin{tikzcd}
    \Spec\, \kappa(t) \ar[r,"x_{t,B}"] \ar[d] & \sU \ar[d] \\
    T^{\rm H.}_t \ar[r] & \sX
    \end{tikzcd}
\]

\textit{Step 8 (extending the lift from a closed point):}

\textit{Step 8 (i) (lift to a formal completion):}
Now we only need to lift $x_{t,B}$ to $T^{\rm H.}_t$. For that consider $T^{\wedge}_t$ the completion of $T^{\rm H.}_t$ at the closed point $t$. One has a canonical factorization $\Spec(\kappa(t)) \ra T^{\wedge}_t \ra T^{\rm H.}_t$. Since $\sX$ is integrable any morphism $T^{\wedge}_t \ra \sX$ is determined by a map $T^{(n)}_t \ra \sX$ for some $n \geq 1$, and since $\Spec(\kappa(t)) \ra T^{(n)}_t$ is a nilpotent embedding one obtains a lift $T^{(n)}_t \ra \sU$ because $\sU \ra \sX$ is formally smooth. Thus, we have a diagram
\[
    \begin{tikzcd}
    T^{\wedge}_t \ar[r,"\hat{x}_{t}"] \ar[d] & \sU \ar[dd] \\
    T^{(n)}_t \ar[ru,"x^{(n)}_t"] \ar[d] & \\
    T^{\rm H.}_t \ar[r] & \sX
    \end{tikzcd}
\]
where the morphism $x^{(n)}_t$ is the lift determined by the formal smoothness condition.

\textit{Step 8 (ii) (lift to an actual smooth cover):} to lift the morphism $\hat{x}_t$ to $T^{\rm H.}_t$ we use Proposition \ref{prop:derived-Popescu}. Notice that $T^{\wedge}_t \ra T^{\rm H.}_t$ is flat morphism between excellent rings\footnote{Indeed, $T^{\rm H.}_t$ is excellent since $T \ra S$ is of finite presentation and $S$ is excellent hence $T$ is excellent, and Henselisation preserves excellent rings (see \cite[Expos\'e 1, \S 8]{illusieTravauxGabberUniformisation2012}). $T^{\wedge}_t$ is excellent since it is the completion of $T^{\rm H.}_t$ with respect to an ideal that is contained in its Jacobson ideal (see \cite[Expos\'e 1, Proposition 9.1]{illusieTravauxGabberUniformisation2012}).} then it is geometrically regular, so the derived Popescu's theorem \cite[Theorem 3.7.5]{DAG} says that there exists a filtered diagram $J$ and a presentation
\[
    T^{\wedge}_T \overset{\simeq}{\ra} \lim_{J^{\rm op}}Z_j
\]
where $\pi_j: T_j \ra T^{\rm H.}_t$ is smooth for each $j \in J$. Because $\sX$ is almost of finite presentation there exists a lift
\[
    \begin{tikzcd}
    T_j \ar[r,"x_j"] \ar[d,"\pi_j"'] & \sU \ar[d] \\
    T^{\rm H.}_t \ar[r] & \sX
    \end{tikzcd}
\]

\textit{Step 8 (iii) (lift to the Henselisation):} Finally, since $\pi_j$ is smooth and it has a section over the closed point $t \in T^{\rm H.}_t$ it has a section over $T^{\rm H.}_{t}$ so we get a lift
\[
    \begin{tikzcd}
    T_j \ar[r,"x_j"] \ar[d,"\pi_j"'] & \sU \ar[d] \\
    T^{\rm H.}_t \ar[r] \ar[u,"s_{j}",dashed,bend left=60] & \sX
    \end{tikzcd}
\]

This finishes the proof!

\end{proof}

\subsection{Proof of main theorem}

Our proof will use induction, exploiting the following result of J.\ Lurie.

\begin{thm}
\label{thm:classical-representable}
Let $\sX$ be a prestack
\begin{condlist}
    \item assume that $\sX$ satisfy the following subset of conditions:
        \begin{enumerate}
        \item[$(iv)'_{\rm all}$] $\sX$ is infinitesimally cohesive;
        \item[$(v)_{\rm ec}$] $\sX$ admits a cotangent complex at every $(S\overset{x}{\ra}\sX) \in \Schaffconv_{/\sX}$;
        \item[(vi)] $\sX$ is convergent.
        \end{enumerate}
    \item Moreover, assume that there exists a $0$-geometric stack $\sY$ and an equivalence
    \[
        \classical{\sX} \overset{\simeq}{\ra} \classical{\sY}.
    \]
\end{condlist}
Then $\sX$ is $0$-geometric.
\end{thm}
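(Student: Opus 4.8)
The plan is to recognise $\sX$ \emph{directly} as a $0$-geometric stack by building an affine representable \'etale atlas pulled back from a classical atlas of $\classical{\sX}$; apart from formal manipulations, the one substantive point will be promoting the classical \'etale descent of $\classical{\sX}$ to \'etale descent for $\sX$, and this is the only place where the deformation-theoretic hypotheses $(iv)'_{\rm all}$, $(v)_{\rm ec}$, $(vi)$ enter in an essential way.

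First I would extract from the equivalence $\classical{\sX}\simeq\classical{\sY}$ and the hypothesis that $\sY$ is $0$-geometric that $\classical{\sX}$ is $0$-truncated and is a sheaf for the \'etale topology (the \'etale sheaf condition only refers to classical affine schemes and the \v{C}ech nerves of classical \'etale covers, so $\classical{\sX}\simeq\classical{\sY}$ inherits it from $\sY$). I then claim $\sX$ is itself an \'etale sheaf. Since $\classical{\sX}$ is $0$-truncated, Remark~\ref{rem:n-truncated-and--n-connective}, applied to the eventually coconnective truncations of $\sX$ where $(v)_{\rm ec}$ supplies a cotangent complex and $(vi)$ lets one reassemble, shows $T^*_x\sX$ is connective at every eventually coconnective point $x$. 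As $\sX$ has deformation theory (by $(iv)'_{\rm all}$, $(v)_{\rm ec}$, $(vi)$, via reductions in the spirit of Lemma~\ref{lem:cotangent-complex-from-ft-points}), \cite[Chapter 1, Proposition 8.2.2]{GRII} then upgrades the \'etale descent of $\classical{\sX}$ to \'etale descent for $\sX$.

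Next, since $\sY$ is $0$-geometric with $\classical{\sY}$ $0$-truncated, by \cite{Porta-Comparison} $\sY$ is a spectral Deligne--Mumford $0$-stack, hence admits an affine representable \'etale atlas $\sV\ra\sY$; put $\sU_0:=\classical{\sV}\ra\classical{\sY}\simeq\classical{\sX}$, a classical affine representable \'etale atlas of $\classical{\sX}$, and set $\sU:=\sU_0\underset{\classical{\sX}}{\times}\sX$, the base change along the canonical map $\sX\ra\classical{\sX}$. For an arbitrary affine scheme $T\ra\sX$, unwinding the pointwise definitions identifies the prestack $\sU\underset{\sX}{\times}T$ with $T'\mapsto\Hom_{/\classical{T}}(\classical{T'},\Spec C_0)$, where $\Spec C_0:=\sU_0\underset{\classical{\sX}}{\times}\classical{T}\ra\classical{T}$ is a classical \'etale cover; by topological invariance of the \'etale site \cite[Chapter B]{SAG} this functor is represented by the unique derived affine scheme $\Spec C$ \'etale over $T$ with $\classical{\Spec C}\simeq\Spec C_0$. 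Thus $\sU\underset{\sX}{\times}T\simeq\Spec C$ is affine, $\Spec C\ra T$ is \'etale (in particular smooth), and it is a cover since $\Spec C_0\ra\classical{T}$ is. Together with the tautological factorization $\Spec C\simeq\sU\underset{\sX}{\times}T\ra\sU$ and the fact that $\sX$ is an \'etale sheaf, this shows $\sU\ra\sX$ is an affine representable \'etale atlas and an effective epimorphism.

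Combining the three steps, $\sX$ is an \'etale sheaf, $\classical{\sX}$ is $0$-truncated, and $\sU\ra\sX$ is an affine representable smooth atlas, so $\sX$ is $0$-geometric by \S\ref{subsubsec:0-geometric-defn} (and in fact a spectral Deligne--Mumford $0$-stack, since the atlas is \'etale). I expect the \'etale descent step to be the main obstacle: it is the only input that uses the full deformation theory of $\sX$, and some care is needed because $(v)_{\rm ec}$ only provides a cotangent complex at eventually coconnective points, so the descent argument must be organised on the truncations $\tau^{\leq n}\sX$ and glued via convergence. Alternatively the statement can be deduced from Lurie's deformation-theoretic recognition criterion for spectral Deligne--Mumford stacks \cite[Theorem 18.1.0.2]{SAG}, once one observes that $\classical{\sX}$ is a classical Deligne--Mumford $0$-stack and invokes \cite{Porta-Comparison}; the work then lies precisely in matching our hypotheses to those loc.\ cit.
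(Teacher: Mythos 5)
The paper's own proof is precisely the one-line citation you offer only as an afterthought: it invokes \cite[Theorem 18.1.0.2]{SAG} directly, observing that in this paper's conventions ``$0$-geometric'' means spectral Deligne--Mumford with $0$-truncated classical part, and that the truncatedness is inherited from $\sY$. Your primary argument tries to reprove the relevant case by hand, and it has a genuine gap in the atlas step.

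Concretely: you set $\sU:=\sU_0\underset{\classical{\sX}}{\times}\sX$ and verify that for every affine $T\ra\sX$ the base change $\sU\underset{\sX}{\times}T$ is an affine scheme \'etale over $T$ (that part is fine, via \'etale rigidity and topological invariance of the \'etale site). But condition c) of \S\ref{subsubsec:0-geometric-defn} requires $\sU$ \emph{itself} to be a disjoint union of affine schemes, in addition to $f:\sU\ra\sX$ being affine representable; affine representability of $f$ alone says nothing about $\sU$ (the identity $\sX\ra\sX$ is affine representable for any prestack). Writing $\sU_0\simeq\sqcup_i V_{0,i}$, the piece $\sV_i:=V_{0,i}\underset{\classical{\sX}}{\times}\sX$, i.e.\ the functor $T'\mapsto \Map(\classical{T'},V_{0,i})\underset{\sX(\classical{T'})}{\times}\sX(T')$, is exactly the ``unique \'etale lift of $V_{0,i}\ra\classical{\sX}$ to $\sX$'', and proving that it is corepresented by a connective ring $C_i$ with $H^0(C_i)\simeq\Gamma(V_{0,i})$ is the real content of the theorem: one must build $C_i$ as the limit of a Postnikov-type tower of square-zero extensions classified by maps out of $T^*\sX$, extend the map to $\sX$ stage by stage using $(iv)'_{\rm all}$ and the cotangent complex, and assemble the limit using $(vi)$. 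So, contrary to your opening claim, the deformation-theoretic hypotheses are not needed only to upgrade \'etale descent --- they are indispensable for producing the affine charts themselves. Once that construction is supplied you have essentially rewritten the proof of \cite[Theorem 18.1.0.2]{SAG}, which is why the paper simply cites it.
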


\begin{proof}
This is \cite[Theorem 18.1.0.2]{SAG}. We notice that in our terminology $0$-geometric stands for spectral Deligne--Mumford stack whose underlying classical stack is $0$-truncated. Theorem 18.1.0.2 states that $\sX$ is just a spectral Deligne--Mumford stack, however since $\classical{\sX} \simeq \classical{\sY}$ the truncatedness claim follows from the definition of $0$-geometric.
\end{proof}

The following observation is useful in the inductive step of the argument below:

\begin{rem}
\label{rem:n-geometric-at-the-level-of-prestacks}
Given a morphism $f:\sX \ra \sY$ of prestacks, if $f$ is $n$-geometric, then the morphism between the associated \'etale stacks\footnote{Here $L:\PStk \ra \Stk$ is the left adjoint to the inclusion of \'etale stacks in prestacks.}
\[
    L(F):L(\sX) \ra L(\sY)
\]
is also $n$-geometric.
\end{rem}

\begin{proof}[Proof of main theorem: (i-vii) are sufficient]

\

\textit{Step 1 (extending induction):} By Remark \ref{rem:n-truncated-structure-map} we notice that condition $(vii)_n$ in Theorem \ref{thm:main-result} is equivalent to requiring that the morphism $\classical{p}:\sX \ra S$ is $n$-truncated. This last condition makes sense for $n\geq -2$ and we perform induction with the base case being $n = -2$.

\textit{Step 2 (smooth atlas):} Let $f:\sU \ra \sX$ denote the smooth cover of $\sX$ constructed in Proposition \ref{thm:etale-surjection} and denote by $\sU^{\bu}$ the simplicial object obtained by taking the \v{C}ech nerve of $f$. For any $m\geq 0$ we have
\[
    \sU^m \simeq \bigsqcup_{i \in I^{m+1}}U_{i_1}\underset{\sX}{\times}\cdots\underset{\sX}{\times}U_{i_{m+1}}
\]
where each $U_{i_j}$ is an affine scheme. We notice that one has the following pullback diagram
\[
    \begin{tikzcd}
    U_{i_1}\underset{\sX}{\times}\cdots\underset{\sX}{\times}U_{i_{m+1}} \ar[r,"p"] \ar[d] & U_{i_1}\underset{S}{\times}\cdots\underset{S}{\times}U_{i_{m+1}} \ar[d] \\
    \sX \ar[r,"\Delta_{\sX/S}"'] & \sX\underset{S}{\times}\sX
    \end{tikzcd}
\]
So by Proposition \ref{prop:n-good-morphism-has-n-1-good-diagonal} one has that
\[
    U_{i_1}\underset{\sX}{\times}\cdots\underset{\sX}{\times}U_{i_{m+1}} \ra S
\]
is $(n-1)$-good. Thus, by the inductive hypothesis each $U_{i_1}\underset{\sX}{\times}\cdots\underset{\sX}{\times}U_{i_{m+1}}$ is $(n-1)$-geometric.

\textit{Step 3:} Notice that each $\sU^m$ is $(n-1)$-geometric as a disjoint union of $(n-1)$-geometric stacks. Since by construction $\sU \ra \sX$ is an \'etale surjection, one obtains
\[
    \left|\sU^{\bu}\right|_{\Stk} \overset{\simeq}{\ra} \sX.
\]
 
Notice that we will be done if we check that $\sX \ra \sX\underset{S}{\times}\sX$ is $(n-1)$-geometric. By Remark \ref{rem:n-geometric-at-the-level-of-prestacks} it is enough to check that 
\[
    \left|\sU^{\bu}\right|_{\PStk} \ra \left|\sU^{\bu}\right|_{\PStk} \underset{S}{\times} \left|\sU^{\bu}\right|_{\PStk}
\]
is $(n-1)$-geometric. Given $T \ra \left|\sU^{\bu}\right|_{\PStk} \underset{S}{\times} \left|\sU^{\bu}\right|_{\PStk}$ by definition this factor through $\sU \underset{S}{\times}\sU$ so considering the diagram 
\begin{equation}
    \label{eq:pullback-via-diagonal-is-level-2-Cech}
    \begin{tikzcd}
    \left|\sU^{\bu}\right|_{\PStk} \underset{\left|\sU^{\bu}\right|_{\PStk} \underset{S}{\times} \left|\sU^{\bu}\right|_{\PStk}}{\times}T \ar[r] \ar[d] & T \ar[d] \\
    \left|\sU^{\bu}\right|_{\PStk}\underset{\left|\sU^{\bu}\right|_{\PStk} \underset{S}{\times} \left|\sU^{\bu}\right|_{\PStk}}{\times}\sU \ar[r]\ar[d] & \sU\underset{S}{\times}\sU \ar[d] \\
    \left|\sU^{\bu}\right|_{\PStk} \ar[r] & \left|\sU^{\bu}\right|_{\PStk} \underset{S}{\times} \left|\sU^{\bu}\right|_{\PStk}
    \end{tikzcd}
\end{equation}
We notice that $\left|\sU^{\bu}\right|_{\PStk}\underset{\left|\sU^{\bu}\right|_{\PStk} \underset{S}{\times} \left|\sU^{\bu}\right|_{\PStk}}{\times}\sU \simeq \sU\underset{|\sU^{\bu}|_{\PStk}}{\times}\sU \simeq \sU\underset{\sX}{\times}\sU$, so the upper horizontal map in (\ref{eq:pullback-via-diagonal-is-level-2-Cech}) is $(n-1)$-geometric, since $\sU\underset{\sX}{\times}\sU \ra \sU \underset{S}{\times}\sU$ is $(n-1)$-geometric.

\textit{Step 4 (base step):} We notice that in the case $p:\sX \ra S$ is $(-2)$-truncated, i.e.\ $\classical{\sX} \overset{\simeq}{\ra}\classical{S}$, the diagonal morphism $\sX \overset{\simeq}{\ra} \sX \underset{S}{\times}\sX$ is an isomorphism, so in particular it is $0$-geometric and Lemma \ref{lem:main-lemma} implies that $\sX$ is infinitesimally cohesive, i.e.\ satisfies conditions (iv)', (v), and (vi). Thus, Theorem \ref{thm:classical-representable} implies that $\sX$ is $0$-geometric and we are done.

\end{proof}

\printbibliography

\end{document}